\DeclareSymbolFontAlphabet{\mathbb}{AMSb}
\DeclareSymbolFontAlphabet{\mathbbl}{bbold}
\def\N{\mathbb N}
\def\R{\mathbb R}
\def\P{\mathbb P}
\def\D{\mathcal{D}}
\def\S{\mathcal{S}}
\def\<{\langle}
\def\>{\rangle}
\def\wt{\widetilde}
\def\Chi{\raise .3ex \hbox{\large $\chi$}}
\def\ul{\underline}
\newcommand{\HGbracket}[2]{\langle #1,#2\rangle_{\Gamma}}
\def\[{\Bigl [}
\def\]{\Bigr ]}
\def\({\Bigl (}
\def\){\Bigr )}
\def\l{\iota}
\def\dsp{\displaystyle}
\def\x{{\bf x}}
\def\n{{\bf n}}
\def\U{{\bf U}}
\def\q{{\bf q}}
\def\g{{\mathbf{g}}}
\def\aa{{\mathfrak a}}
\def\faces{\mathcal{F}}
\def\x{{\bf x}}
\def\dsp{{\displaystyle x}}
\def\d{{\rm d}}
\def\div{{\rm div}}
\def\a{\alpha}
\def\q{\mathbf{q}}
\def\K{\mathbb{K}}
\def\n{\mathbf{n}}
\def\dsp{\displaystyle}
\def\bu{\mathbf{u}}
\def\bv{\mathbf{v}}
\def\O{\Omega}
\def\G{\Gamma}
\def\bbsig{\bbsigma}
\def\bbeps{\bbespilon}
\newcommand{\bb}{\mathbb}
\newcommand{\cc}{\mathcal}
\newcommand{\jump}[1]{\llbracket #1 \rrbracket}
\newcommand{\dtk}{\delta t^{k+\frac{1}{2}}}
\newcommand{\dtkmun}{\delta t^{k-\frac{1}{2}}}
\newcommand{\dtzero}{\delta t^{\frac{1}{2}}}
\newcommand{\nf}{\nicefrac}
\newcommand{\email}[1]{\href{mailto:#1}{#1}}
\theoremstyle:=definition,remark,plain\do{%
        \expandafter\g@addto@macro\csname th@\theoremstyle\endcsname{%
            \addtolength\thm@preskip\parskip
            }%
        }
\newtheorem{theorem}{Theorem}
\numberwithin{theorem}{section}
\newtheorem{lemma}[theorem]{Lemma}
\theoremstyle{remark}
\newtheorem{remark}[theorem]{Remark}
\theoremstyle{definition}
\newenvironment{acknowledgements}
      {\bigskip\bigskip~\newline\textbf{Acknowledgements}}
\def\g{{\rm nw}}
\def\l{{\rm w}}
\def\rt{{\rm rt}}
\def\trace{\gamma}
\def\grad{\nabla}
\def\del{\partial}
\def\div{{\rm div}}
\definecolor{labelkey}{rgb}{0.6,0,1}
 \newcounter{corr}
 \definecolor{violet}{rgb}{0.580,0.,0.827}
 \newcommand{\corr}[3]{\typeout{Warning : a correction remains in page
 \thepage}
 				\stepcounter{corr}        
 				{\color{blue}\ifmmode\text{\,\sout{\ensuremath{#1}}\,}\else\sout{#1}\fi}
         {\color{red}#2}
         {\color{violet} \ifmmode\text{#3}\else #3\fi }}
\def\thm@space@setup{%
  \thm@preskip=\parskip \thm@postskip=0pt
}
\begin{document}
\title{Energy-stable discretization of two-phase flows in deformable porous media with frictional contact at matrix--fracture interfaces 
}
\author[1,2]{{Francesco Bonaldi}\footnote{Corresponding author, \email{francesco.bonaldi@umontpellier.fr}}}
\author[3]{{J\'er\^ome Droniou}\footnote{\email{jerome.droniou@monash.edu}}}
\author[1]{{Roland Masson}\footnote{\email{roland.masson@univ-cotedazur.fr}}}
\author[4]{{Antoine Pasteau}\footnote{\email{antoine.pasteau@andra.fr}}}
\affil[1]{Universit\'e C\^ote d'Azur, Inria, CNRS, Laboratoire J.A. Dieudonn\'e, team Coffee, Nice, France}%
\affil[2]{IMAG, Univ Montpellier, CNRS, Montpellier, France}%
\affil[3]{School of Mathematics, Monash University, Victoria 3800, Australia}%
\affil[4]{Andra, Chatenay-Malabry, France}%
\date{}
\maketitle
\vspace{-.5cm}
{\footnotesize{
\centering
\textbf{Highlights}
\vspace{-.2cm}
\begin{itemize}
  %
\item Energy-stable model and scheme for coupled two-phase flow and contact mechanics in discrete fracture networks  
\item Mechanics conforming discretization coupled with $\bb P_0$ Lagrange multipliers to circumvent singularities, local contact equations
\item Flow discretization in the abstract gradient discretization framework accounting for a large family of schemes 
\item Investigation of nonlinear algorithms both for the contact mechanics and for the fully coupled problem 
\item Validation on benchmark 2D examples and application to a realistic axisymmetric case study
\end{itemize}
}}

\begin{abstract}
\noindent
We address the discretization of two-phase Darcy flows in a fractured and deformable porous medium, including frictional contact between the matrix--fracture interfaces. Fractures are described as a network of planar surfaces leading to the so-called mixed- or hybrid-dimensional models. Small displacements and a linear elastic behavior are considered for the matrix. Phase pressures are supposed to be discontinuous at matrix--fracture interfaces, as they provide a better accuracy than continuous pressure models even for high fracture permeabilities.

The general gradient discretization framework~\cite{gdm} is employed for the numerical analysis, allowing for a generic stability analysis and including several conforming and nonconforming discretizations. We establish energy estimates for the discretization, and prove existence of a solution. To simulate the coupled model, we employ a Two-Point Flux Approximation (TPFA) finite volume scheme for the flow and second-order ($\P_2$) finite elements for the mechanical displacement coupled with face-wise constant ($\bb P_0$) Lagrange multipliers on fractures, representing normal and tangential stresses, to discretize the frictional contact conditions. This choice allows to circumvent possible singularities at tips, corners, and intersections between fractures, and provides a local expression of the contact conditions.
We present numerical simulations of two benchmark examples and one realistic test case based on a drying model in a radioactive waste geological storage structure.
\bigskip \\
\textbf{MSC2010:} 65M12, 76S05, 74B10, 74M15\medskip\\
\textbf{Keywords:} poromechanics, discrete fracture matrix models, contact mechanics, two-phase Darcy flows, discontinuous pressure model, Gradient Discretization Method, non-smooth Newton.
\end{abstract}
%
\section{Introduction}
This work deals with the discretization and simulation of processes coupling two-phase Darcy flows in a fractured and deformable porous medium, the mechanical deformation of the matrix domain surrounding the fractures, and the mechanical behavior of the fractures. Such coupled models are of paramount importance in a broad range of subsurface processes and engineering contexts, whereof we provide hereinafter a non-exhaustive list. In the so-called Enhanced Geothermal Systems, rock permeability is increased by reactivating fractures thanks to hydraulic injection. In the context of CO$_2$ sequestration, it is necessary to verify that injection of CO$_2$ in the subsoil does not trigger the reactivation of sealed faults, in order to guarantee the storage integrity. An important aspect of water management, as well as oil and gas recovery, is the evaluation of the potential impact of fluid depletion, which can trigger fault slip and induce seismicity of human origin. In all of these processes, depending on the kind of exploitation considered, the flow can be characterized by a single phase or by two phases. As a last example, in radioactive waste storage facilities, which constitute the main application of this work, the excavation of operating tunnels generates fracture networks in the so-called Excavation Damage Zone (EDZ). These fractures have a strong impact on the desaturation of the EDZ in the exploitation time scale of the facility of, say, 200 years. High capillary pressures induce a contraction of the pores as well as an extension of the fracture apertures which retroactively impact the desaturation, leading to highly coupled poromechanical processes.

This work focuses on pre-existing fractures or faults, i.e., fracture generation and propagation are not addressed. It considers large-scale fractures represented as a network of codimension-one planar surfaces including intersecting, immersed, and non-immersed fractures coupled with the surrounding matrix. For computational complexity reasons related to such physical models, small-scale fractures are rather included using homogenization techniques. 
Modeling and numerical simulation of poromechanical processes on such so-called mixed- or hybrid-dimensional geometries have been drawing a remarkable attention over the last years, and many approaches have been developed in the literature. We recall here briefly some of the key recent contributions, mostly in the context of single-phase flows. Franceschini \emph{et al.}~\cite{tchelepi-castelletto-2020} used a low-order displacement--Lagrange multiplier--pressure formulation for quasi-static contact mechanics coupled with fracture fluid flow; since this formulation is not uniformly inf-sup stable, an algebraic macroelement-based stabilization technique is implemented. Berge \emph{et al.}~\cite{contact-norvegiens} employed a finite volume method both for the flow and the mechanical deformation, combined with a variationally-consistent mixed discretization of contact mechanics based on face-wise constant Lagrange multipliers accounting for the contact tractions. Stefansson \emph{et al.}~\cite{thm-bergen} extended the coupling by also including thermal effects, giving rise to a Thermo-Hydro-Mechanical coupled system. Garipov \emph{et al.}~\cite{GKT16} for the isothermal case and Garipov and Hui~\cite{GH19} for the non isothermal case use as well a finite-volume approximation for the mass (and energy balance in~\cite{GH19}) equations, combined with a Galerkin finite element approximation for the rock mechanics; a penalty method is employed to enforce contact conditions on fractures in~\cite{GKT16} while a Nitsche consistent penalization is used in~\cite{GH19}.

Concerning the contact mechanics problem alone, it has been the subject of an extensive literature; we cite here in particular the key monographs by Kikuchi and Oden~\cite{oden-contact} and Wriggers~\cite{wriggers2006} as well as the review by Wohlmuth~\cite{Wohlmuth11} on variationally consistent mixed formulations based on Lagrange multipliers. In particular, Ben Belgacem and Renard~\cite{BR2003} studied three mixed linear finite element methods for a frictionless contact problem, i.e., the so-called Signorini problem. Hild \emph{et al.}~\cite{HR2007} provided an error estimate for the {C}oulomb frictional contact problem using finite elements in mixed formulation. Chouly \emph{et al.}~\cite{Chouly2017} provided an extensive overview of Nitsche's method for contact problems and, more recently, proved existence results for the static and dynamic finite element formulations of the problem with Coulomb friction~\cite{CHLR2020}.

Regarding the Darcy flow alone, hybrid-dimensional, also termed Discrete Fracture Matrix (DFM) models have been the object of a considerable amount of works since the last twenty years. To mention a few, let us refer to~\cite{MAE02,FNFM03,KDA04,MJE05,ABH09,TFGCH12,SBN12,BGGLM16,BHMS2016,BNY18,FLEMISCH2018239,CDF2018,NBFK2019,BERRE2021103759,ANTONIETTI2021} for single-phase Darcy flows and to~\cite{BMTA03,RJBH06,MF07,Jaffre11,BGGM2015,DHM16,AHMED201749,BHMS2018,gem.aghili,AGHILI2021110452} for two-phase Darcy flows.  

Despite the abundance of contributions concerning single-phase flows in literature, not much work has been specifically devoted to the mathematical modeling of two-phase fractured poromechanics. Let us cite here~\cite{JHA2014} for compressible two-phase flow with contact mechanics for CO$_2$ sequestration applications, and~\cite{MRAFH11} for the case of open fractures (i.e., whose interfaces are not in contact). In our contribution, the extension of fractured poromechanical models to unsaturated flows is guided by the stability of the coupled model in the energy norm, which requires a careful definition of the coupling terms both on the matrix and fracture sides. The notion of equivalent pressure as a convex combination of the phase pressures is the key ingredient for this extension, and its choice both in the matrix and in the fracture network must be consistent with the definition of the fluid mass content to guarantee its stability through energy estimates. Two-phase flow models also involve additional nonlinearities which require advanced nonlinear algorithms to couple the flow and mechanical problems.

In this work, the two-phase flow hybrid-dimensional model accounts for discontinuous pressures and saturations at matrix--fracture interfaces~\cite{BHMS2018,DHM16,gem.aghili}. It is coupled with the deformation of the matrix using a poroelastic model based on the concept of equivalent pressure introduced by Coussy in~\cite{coussy} and taking into account the capillary energy contribution  (see also~\cite{KTJ2013,JHA2014}). The fracture mechanical model accounts for the contact conditions at both sides of the fractures using Coulomb's frictional model and again the concept of equivalent pressure in the fractures. It involves a strong poromechanical coupling through matrix porosity, fracture conductivity and aperture, and matrix and fracture equivalent pressures. The definition of the coupling terms, namely the matrix porosity and fracture aperture on the mechanical side, and the matrix and fracture equivalent pressures on the flow side, ensures the stability of the coupled model in a suitable energy norm. A variant of this model, based on a partial linearization of the fluid mass content using the initial porosity, is also briefly discussed. This slightly different model requires to modify the definition of the matrix equivalent pressure to preserve the energy estimate of the coupled model. This illustrates the key modeling feature of a consistent definition of both coupling terms for two-phase poromechanical models.

 The discretization of the two-phase flow model is based  on the abstract Gradient Discretization (GD) framework~\cite{gdm}, which encompasses a large family of both conforming and nonconforming schemes. The frictional contact mechanics is discretized with a conforming scheme of the displacement field combined with a variationally consistent mixed formulation based on $\mathbb{P}_0$ Lagrange multipliers accounting for the contact tractions. Assuming the coercivity of the flow GD and the inf-sup condition for the contact tractions--displacement spaces, we infer energy estimates for the fully coupled discrete problem, thereby proving the stability of its formulation, as well as the existence of a solution for the discretization of the modified two-phase poromechanical model.
 
 The numerical simulations are based on a typical example fitting this framework, using a cell-centered finite volume scheme for the flow, the Two-Point Flux Approximation (TPFA), and a $\bb P_2$--$\bb P_0$ mixed formulation in terms of displacement and contact tractions for the mechanics. This discretization provides several advantages:~(i) the method is variationally consistent for the frictional contact problem; (ii)~no further parameters like in penalty or Nitsche's method are required; (iii)~it is inf-sup stable in the displacement--contact tractions variables; (iv)~it readily deals with fracture networks including tips, corners or intersections; (v)~it provides a local expression of the contact conditions, which gives access to efficient nonlinear solvers based on a non-smooth Newton formulation; (vi)~it yields the possibility to locally eliminate the contact--traction unknowns, provided that the displacement space contains a bubble function for each fracture face; (vii)~it features pressure--displacement stability in the limit of fluid incompressibility and small time steps.
 
A Newton--Raphson algorithm is used to solve the two-phase flow problem combined with a local nonlinear interface solver to eliminate the matrix--fracture interface pressure unknowns as in~\cite{gem.aghili}. We take account of the frictional contact conditions, as in~\cite{contact-norvegiens}, by resorting to suitable complementarity functions leading to a non-smooth Newton nonlinear solver for the contact mechanics. This non-smooth Newton algorithm is compared with an active set counterpart based on simplified equations for each contact state (open, stick or slip). The fully coupled nonlinear problem of two-phase flow and contact mechanics is iteratively solved using a fixed-point method formulated on the equivalent pressures and combined with a Newton--Krylov acceleration algorithm.  The numerical section also investigates the performance of this Newton--Krylov algorithm and of the non-smooth Newton and active set methods.

The  paper is structured as follows. In Section~\ref{sec:notation} we introduce the notation and geometry assumptions employed throughout this work. In Section~\ref{sec:modeleCont} we give a presentation of the continuous problem in its strong formulation, and focus on the Lagrange-multiplier formulation of the contact conditions. Section~\ref{sec:gradientscheme} presents the general gradient discretization framework we adopt to introduce the discrete counterpart of the coupled problem, focusing in particular on the local formulation of the Coulomb frictional conditions, and contains the main theoretical results, concerning energy estimates and existence of a solution for the discrete problem. In Section~\ref{num.experiments}, we present three numerical experiments, the first two to validate the contact mechanics discretization itself, and the last one to simulate the coupling with a two-phase flow occurring in a drying model of a low-permeability medium by suction at the interface with a ventilation tunnel. The data set of this last test case is based on the Callovo--Oxfordian argilite rock properties of the radioactive waste storage prototype facility of Andra. Finally, in Section~\ref{sec:conclusions} we draw some conclusions and outline potential perspectives for future work.

\section{General notation and assumptions}\label{sec:notation}
In what follows, scalar fields are represented by lightface letters, vector fields by boldface letters. We use the overline notation $\bar v$ to distinguish an exact (scalar or vector) field from its discrete counterpart $v$.
We let $\Omega\subset\R^d$, $d\in\{2,3\}$, denote a bounded polytopal domain, partitioned
into a fracture domain $\Gamma$ and a matrix domain $\O\backslash\overline\G$.
The network of fractures is defined by 
$$
\overline \Gamma = \bigcup_{i\in I} \overline \Gamma_i,
$$  
where each fracture $\Gamma_i\subset \Omega$, $i\in I$ is a planar polygonal simply connected open domain. Without restriction of generality, we will assume that the fractures may only intersect at their boundaries (Figure \ref{fig_network}), that is, for any $i,j \in I, i\neq j$ it holds $\Gamma_i\cap \Gamma_j = \emptyset$, but not necessarily $\overline{\Gamma}_i\cap \overline{\Gamma}_j = \emptyset$.

\begin{figure}[h!]
\begin{center}
\includegraphics[scale=.55]{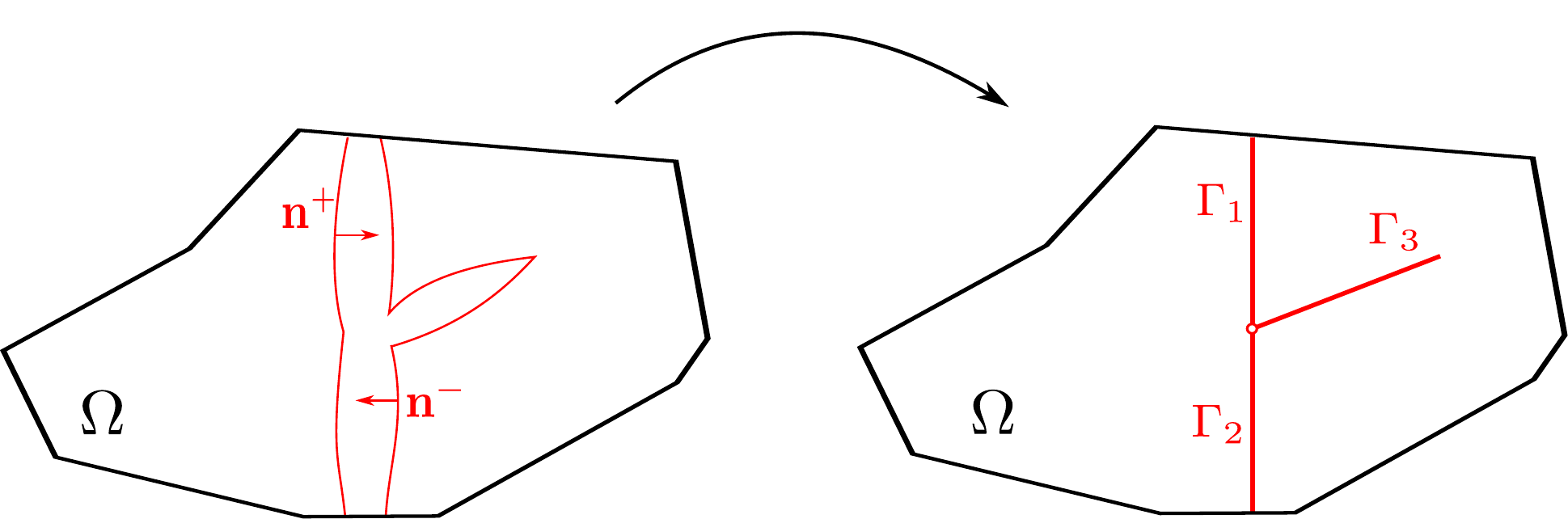}   
\caption{Illustration of the dimension reduction in the fracture width for a 2D domain $\Omega$ with three intersecting fractures $\Gamma_i$, $i\in\{1,2,3\}$, with the equi-dimensional geometry on the left and the mixed-dimensional geometry on the right.}
\label{fig_network}
\end{center}
\end{figure}

The two sides of a given fracture of $\Gamma$ are denoted by $\pm$ in the matrix domain, with unit normal vectors $\n^\pm$ oriented outward from the sides $\pm$. We denote by $\gamma_\aa$ the trace operators on the side $\aa \in \{+,-\}$ of $\Gamma$ for functions in $H^1(\Omega{\setminus}\overline\Gamma)$ and by $\gamma_{\del\Omega}$ the trace operator for the same functions on $\del\O$. The jump operator on $\Gamma$ for functions $\bar\bu$ in  $(H^1(\O\backslash\overline\Gamma))^d$ is defined by
$$
\jump{\bar\bu} = {\gamma_+\bar\bu - \gamma_-\bar\bu}, 
$$
and we denote by
$$
\jump{\bar\bu}_n = \jump{\bar\bu}\cdot\n^+ \quad \mbox{ and } \quad \jump{\bar\bu}_\tau = \jump{\bar\bu} -\jump{\bar\bu}_n \n^+
$$
its normal and tangential components. 
The tangential gradient and divergence along the fractures are respectively denoted by $\nabla_\tau$ and $\div_\tau$. The symmetric gradient operator $\bbeps$ is defined such that $\bbeps(\bar\bv) = {1\over 2} (\nabla \bar\bv +^t\!(\nabla \bar\bv))$ for a given vector field $\bar\bv\in H^1(\O\backslash\overline\Gamma)^d$.

\begin{figure}
  \begin{center}
  \includegraphics[scale=.65]{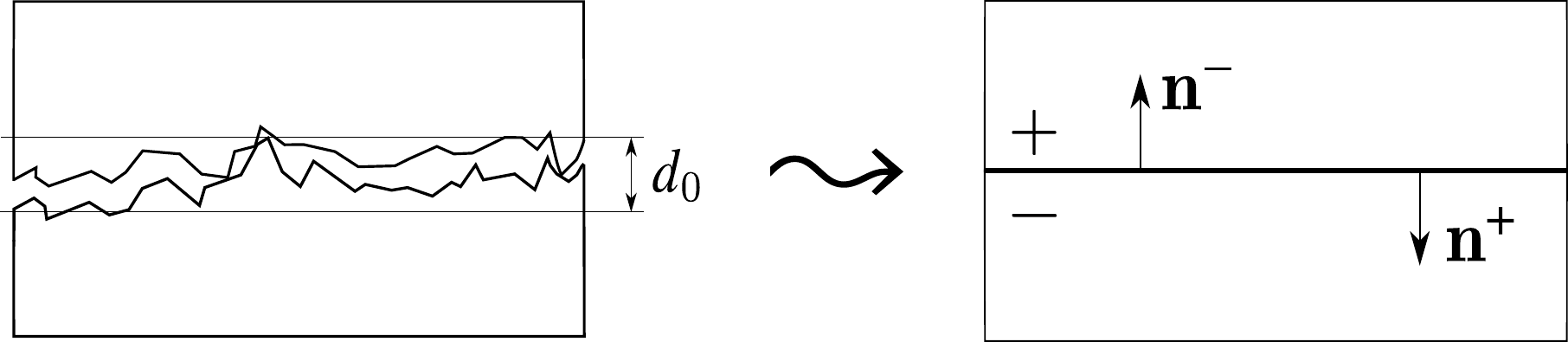}
  \caption{Conceptual fracture model with contact at asperities, $d_0$ representing the fracture aperture at contact state.}
  \label{fig_d0}
\end{center}  
\end{figure}

Let us denote by $d_0: \Gamma \to (0,+\infty)$ the fracture aperture in the contact state (see Figure \ref{fig_d0}). The function $d_0$ is assumed to be continuous with zero limits at
$\partial \Gamma \setminus (\partial\Gamma\cap \partial\Omega)$ (i.e.~the tips of $\Gamma$) and strictly positive limits at $\partial\Gamma\cap \partial\Omega$.

Let us introduce some relevant function spaces. $H_{d_0}^1(\Gamma)$ is the space of functions $v_\Gamma\in L^2(\Gamma)$ such that $d_0^{\nf 3 2} \nabla_\tau v_\Gamma$ belongs to  $L^2(\Gamma)^{d-1}$, 
and whose traces are continuous at fracture intersections $\del\G_i\cap\del\G_j$ (for $(i,j)\in I\times I$, $i\neq j$) and vanish on the boundary $\partial \Gamma\cap \partial\Omega$.
We then introduce the space
\begin{equation*}
\U_0 =\{ \bar \bv\in (H^1(\O\backslash\overline\Gamma))^d : \trace_{\del\O} \bar \bv = 0\}
\end{equation*}
for the displacement vector.
The spaces for each couple of matrix/fracture phase pressures is
\begin{equation*}
V^0 = V^0_m \times V^0_f\quad \mbox{ with }\quad
V^0_m = \{ \bar v\in H^1(\Omega{\setminus}\overline\Gamma) \,:\, \trace_{\del\O} \bar v = 0\}\  \mbox{ and } \
V^0_f = H_{d_0}^1(\Gamma).
\end{equation*}
For $\bar v =(\bar v_m,\bar v_f) \in V^0$, let us denote the jump operator on the side $\aa \in \{+,-\}$ of the fracture by
$$
\jump{\bar v}_\aa = \gamma_\aa \bar v_m - \bar v_f.
$$
The matrix, fracture, and damaged rock types are denoted by the indices ${\rm rt} = m$, ${\rm rt} = f$, and ${\rm rt} = \aa$, {respectively}, and the non-wetting and wetting phases by the superscripts $\alpha =\g$ and $\alpha=\l$, {respectively}. Finally, for any $x\in\mathbb R$, we set $x^+ = \max\{0,x\}$ and $x^- = (-x)^+$.

\section{Problem statement}\label{sec:modeleCont}
%
The primary unknowns of the coupled model are:
{
\begin{itemize}
\item the matrix and fracture phase pressures $\bar p^\alpha_\omega$ for $\omega\in\{m,f\}$ (matrix and fracture) and $\alpha\in\{\g,\l\}$ (non-wetting and wetting phases),
\item the displacement vector field $\bar \bu$.
\end{itemize}
}
The coupled problem  is formulated in terms of flow model, contact mechanics model together with coupling conditions. The flow model is a two-phase hybrid-dimensional model assuming immiscible and incompressible fluids and accounting for the volume conservation equations for each phase $\alpha\in\{\g,\l\}$ and for two-phase Darcy laws 
\begin{equation}
\label{eq_edp_hydro} 
\left\{\!\!\!\!
\begin{array}{lll}
&\partial_t\left(  \bar\phi_m S^{\a}_m(\bar p_{c,m}) \right) + \div \left(  \q^\a_m \right) = h_m^\a & \mbox{ on } (0,T)\times \Omega{\setminus}\overline\Gamma,\\[1ex]
& \q^\a_m = \dsp - \eta^\a_m ( S^{\a}_m(\bar p_{c,m})) \K_m  \nabla \bar p^\a_m & \mbox{ on } (0,T)\times \Omega{\setminus}\overline\Gamma,\\[1ex]
&\partial_t\left(  \bar d_f S^{\a}_f(\bar p_{c,f}) \right)
+ \div_\tau (  \q^\a_f ) -   Q^\a_{f,+} -   Q^\a_{f,-}  = h_f^\a & \mbox{ on } (0,T)\times \Gamma,\\[1ex]
& \q^\a_f = \dsp -  \eta^\a_f (S^{\a}_f(\bar p_{c,f})) ({1\over 12} \bar d_f^{\;3}) \nabla_\tau \bar p_f^\a  & \mbox{ on } (0,T)\times \Gamma. 
\end{array}
\right.
\end{equation}
In \eqref{eq_edp_hydro},  for $\omega\in \{m,f\}$, $\bar p_{c,\omega} = \bar p_\omega^\g - \bar p_\omega^\l$ denotes the capillary pressure, $\eta_\omega^\alpha$ is the phase mobility function, and $S^{\a}_\omega$ the phase saturation function such that $S^{\g}_\omega + S^{\l}_\omega=1$. The matrix porosity is denoted by $\bar \phi_m$ and the matrix permeability tensor by $\K_m$. The fracture aperture, denoted by $\bar d_f$, yields the fracture conductivity ${1\over 12} \bar d_f^{\;3}$ \emph{via} the Poiseuille law. The matrix--fracture interface fluxes $\bar Q^\a_{f ,\aa}$ are defined by the coupling conditions below on each side $\aa$ of the fractures.  

The contact mechanics model accounts for the poromechanical equilibrium equation with a Biot linear elastic constitutive law and Coulomb frictional contact model at matrix--fracture interfaces
\begin{equation}
\label{eq_edp_meca} 
\left\{\!\!\!\!
\begin{array}{lll}
& -\div \(\bbsig(\bar\bu) - b ~ \bar p^E_m{\mathbb I}\)= \mathbf{f} & \mbox{ on } (0,T)\times \Omega{\setminus}\overline\Gamma,\\[1ex]
  & \bbsig(\bar\bu) = \frac{E}{1+\nu}\(\bbeps(\bar\bu) + \frac{\nu}{1-2\nu} (\div\,\bar\bu)\mathbb{I}\) & \mbox{ on } (0,T)\times \Omega{\setminus}\overline\Gamma,  \\[1ex]
  & {\bf T}^{+} + {\bf T}^{-} = {\bf 0}  & \mbox{ on } (0,T)\times \Gamma,\\[1ex]
  & T_n \leq 0, \,\,   \jump{\bar\bu}_n \leq 0, \,\, \jump{\bar\bu}_n ~  T_n = 0   & \mbox{ on } (0,T)\times \Gamma,\\[1ex]
  & |{\bf T}_\tau| \leq - F ~T_n   & \mbox{ on } (0,T)\times \Gamma, \\[1ex]
  &  (\partial_t \jump{\bar\bu}_\tau)\cdot {\bf T}_\tau - F ~T_n |\partial_t \jump{\bar\bu}_\tau|=0   & \mbox{ on } (0,T)\times \Gamma.   
\end{array}
\right.
\end{equation}
In \eqref{eq_edp_meca}, $b$ is the Biot coefficient, $E$ and $\nu$ are the effective Young modulus and Poisson ratio, $F \ge 0$ is the friction coefficient, and the contact tractions are defined by 
\begin{equation*}
\left\{\!\!\!\!
\begin{array}{lll}
  & {\bf T}^{\aa} = {(\bbsig(\bar\bu) - b ~ \bar p^E_m \mathbb I)\n^\aa + \bar p^E_f \n^\aa}  & \mbox{ on } (0,T)\times \Gamma,  \aa \in \{+,-\},\\[1ex]
  & T_n = {\bf T}^{+}\cdot \n^+, & \mbox{ on } (0,T)\times \Gamma, \\[1ex]
  & {\bf T}_\tau = {\bf T}^{+} - ({\bf T}^{+}\cdot \n^+)\n^+ & \mbox{ on } (0,T)\times \Gamma. 
\end{array}
\right.
\end{equation*}
The complete system of equations~\eqref{eq_edp_hydro}--\eqref{eq_edp_meca} is closed by means of coupling conditions. The first equation in \eqref{closure_laws} accounts for the linear poroelastic state law for the variations of the matrix porosity $\bar\phi_m$ extended to two-phase flows using the key concept of equivalent pressure $\bar p^E_m$~\cite{coussy}. The second and third ones are the matrix--fracture transmission conditions for the two-phase flow model. Following~\cite{DHM16} they account for the volume conservation equations at each side $\aa$ of the fractures. A layer of damaged rock of thickness $\bar d_\aa$ (possibly vanishing) is included in the model, characterized by its own porosity $\bar \phi_\aa$,  mobility functions $\eta_\aa^\alpha$, and saturation functions $S^\a_\aa$. It is illustrated in Figure \ref{fig_mfflux} which also exhibits the matrix--fracture interface fluxes $\bar Q^\a_{f ,\aa}$, which can be viewed as two-point monotone approximations in the fracture width (see~\cite{DHM16} for more details). The normal fracture transmissibility $\Lambda_f$ is assumed here to be independent of $\bar d_f$ for simplicity, but the subsequent analysis can accommodate a fracture-width-dependent transmissibility, provided that the function $\bar d_f\mapsto\Lambda_f(\bar d_f)$ is continuous and bounded from below and above by strictly positive constants. 
The fourth equation in \eqref{closure_laws} is the definition of the fracture aperture $\bar d_f$.  
\begin{equation}
\label{closure_laws}
\left\{\!\!\!\!
\begin{array}{lll}
  & \partial_t \bar\phi_m = \dsp b~\div \partial_t \bar\bu + \frac{1}{M} \partial_t \bar p^E_m & \mbox{ on } (0,T)\times \Omega{\setminus}\overline\Gamma,\\  [2ex]
  & \q_m^\a\cdot\n^\aa - Q_{f,\aa}^\a = \bar d_\aa \bar \phi_\aa \partial_t S^\a_\aa(\gamma_\aa \bar p_{c,m}) &   \mbox{ on } (0,T)\times \Gamma, \aa \in \{+,-\},\\[1ex]
  & \bar Q_{f,\aa}^\a =  \eta^\a_\aa(S^\a_\aa(\gamma_\aa \bar p_{c,m})) \Lambda_f \jump{\bar p^\a}_\aa^+  -
  \eta^\a_f(S^\a_f(\bar p_{c,f})) \Lambda_f \jump{\bar p^\a}_\aa^-  &   \mbox{ on } (0,T)\times \Gamma, \aa \in \{+,-\}, \\[1ex]
& \bar d_f = d_0 -\jump{\bar\bu}_n   & \mbox{ on } (0,T)\times \Gamma.  
\end{array}
\right.
\end{equation}
The following initial conditions are imposed on the phase pressures and matrix porosity 
\begin{equation}
  \label{initial_conditions}
\bar p^\a_\omega|_{t=0} = \bar p^{\a}_{0,\omega},  \quad \bar \phi_m|_{t=0}= \bar\phi_m^0, 
\end{equation}
and normal flux conservation for $\q^\a_f$ is prescribed at fracture intersections not located on the boundary $\partial\Omega$ (see Figure~\ref{fluxes}). \\

As exhibited in Figure \ref{fig_d0}, due to surface roughness, the fracture aperture $\bar d_f \geq d_0$ does not vanish except at the tips. The open space is always occupied by the fluids, which act on each side $\aa$ of the fracture by means of the fracture equivalent pressure $\bar p^E_f$ defined below, appearing in the definition of the contact traction ${\bf T}^{\aa}$.
In the above equations, the equivalent pressure $\bar p^E_\omega$, $\omega\in \{m,f\}$, is defined following~\cite{coussy} as
$$
\bar p^E_{\omega} = \dsp \sum_{\alpha \in \{\g,\l\} } \bar p^\alpha_\omega~S^\alpha_\omega(\bar p_{c,\omega})  - U_{\omega}(\bar p_{c,\omega}), 
$$
where
\begin{equation*}
U_{\rm rt}(\bar p_c) =  \int_0^{\bar p_c} q  ( S^{\g}_{{\rm rt}})'(q) \d q
\end{equation*}
is the capillary energy density function for each rock type ${\rm rt}\in \{m,f,\aa\}$. As already noticed in~\cite{KTJ2013,JHA2014,GDM-poromeca-cont}, the use of these equivalent pressures is instrumental to obtaining energy estimates.
It formally relies on the following identity, in the matrix:
$$
\sum_{\alpha\in \{\g,\l\}} \bar p^\alpha_m \partial_t \( \bar\phi_m S^{\a}_m(\bar p_{c,m}) \) - b \bar p^E_m \div(\partial_t \bu) = \partial_t \( \bar \phi_m U_m(\bar p_{c,m}) + {1\over 2 M} (\bar p^E_m)^2 \). 
$$
This idea is extended here to the fracture two-phase poromechanical coupling and is based on the formal identity:
$$
\sum_{\alpha\in \{\g,\l\}} \bar p^\alpha_f \partial_t \(\bar d_f S^{\a}_f(\bar p_{c,f})\) + \bar p^E_f \jump{\partial_t \bar\bu}_n =
\partial_t \( \bar d_f U_f(\bar p_{c,f})\). 
$$
Note that, unlike~\cite{JHA2014}, the definition of the contact traction on each side $\aa$ is based on the fracture pressures $\bar p^\alpha_f$ and not on the matrix--fracture interface pressures $\gamma_\aa \bar p^\alpha_m$, which does not seem to lead to an energy estimate for the coupled model. \\
%
\begin{SCfigure}
\includegraphics[width=6cm]{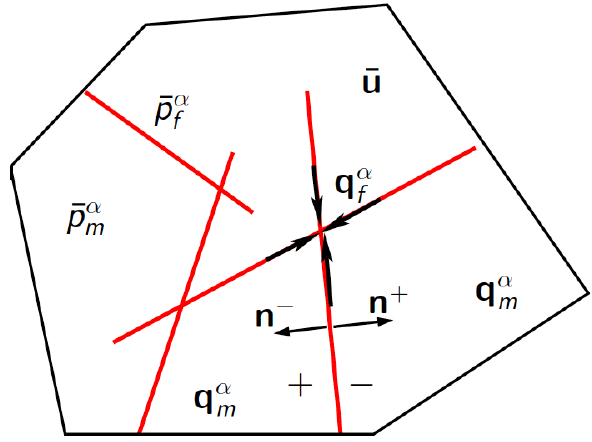}
\caption{Example of a 2D domain $\Omega$ with its fracture network $\Gamma$, unit normal vectors $\n^\pm$ to $\Gamma$, phase pressures $\bar p^\alpha_m$ in the matrix and $\bar p^\alpha_f$ in the fracture network, displacement vector field $\bar \bu$, matrix Darcy velocities $\q^\alpha_m$ and fracture tangential Darcy velocities $\q^\alpha_f$ integrated along the fracture width. }
\label{fluxes}
\end{SCfigure}
\ \
\begin{SCfigure}
\raisebox{.5cm}{
\includegraphics[scale=.65]{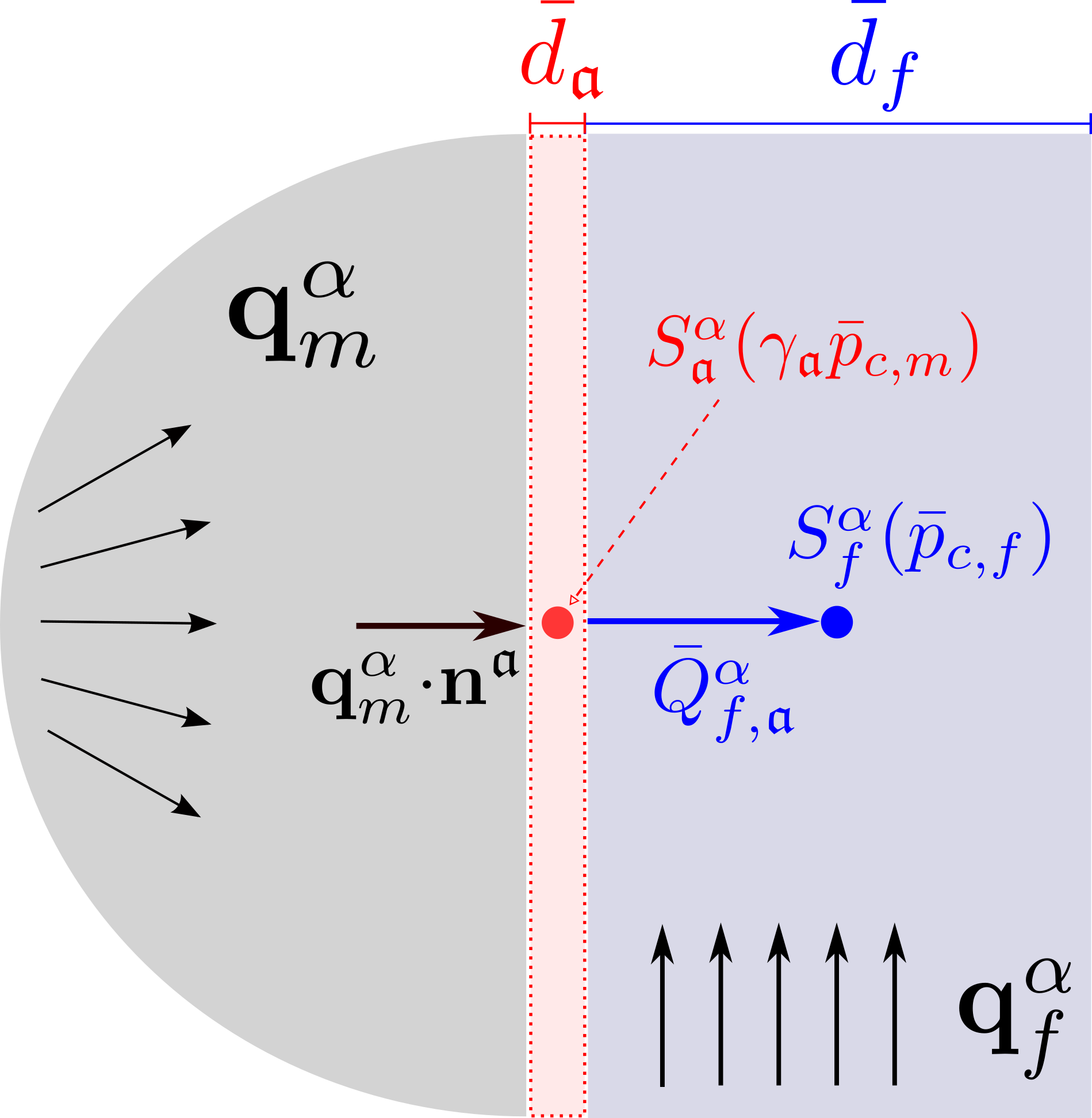}}
\caption{Illustration of the flux transmission condition between matrix and fracture, including a layer of damaged rock of thickness $\bar d_\aa$, $\aa \in \{+,-\}$. It can be seen as an upwind two-point-like approximation of $\bar Q^\a_{f ,\aa}$. The arrows show the positive orientation of the normal fluxes $\q_m^\a\cdot\n^\aa$ (inward to the damaged layer) and $\bar Q^\a_{f,\aa}$ (outward from the damaged layer).}
 \label{fig_mfflux}
\end{SCfigure}
We make the following main assumptions on the {parameters of the model}:
\begin{enumerate}[label=(H\arabic*),leftmargin=*]
\item For each phase $\a \in\{\g,\l\}$ and rock type ${\rm rt}\in\{m,f,\aa\}$, the mobility function $\eta^\a_{\rm rt}$ is continuous, non-decreasing, and there exist
  $0< \eta_{\rm rt,{\rm min}}^\a \leq \eta^\a_{\rm rt,{\rm max}}< +\infty$  such that 
  $\eta^\a_{\rm rt,{\rm min}} \leq \eta^\a_{\rm rt}(s) \leq \eta^\a_{\rm rt,{\rm max}}$ for all $s\in [0,1]$. Moreover, $\eta_{\mathfrak a}^{\rm w}(1) = \eta_f^{\rm w}(1)$ and $\eta_{\mathfrak a}^{\rm nw}(0) = \eta_f^{\rm nw}(0)$.
  \label{first.hyp}
\item For each rock type ${\rm rt}\in\{m,f,\aa\}$, the non-wetting phase saturation function $S^\g_{\rm rt}$ is a non-decreasing {Lipschitz}
  continuous function with values in $[0,1]$, and   $S^\l_{\rm rt} = 1 - S^\g_{\rm rt}$.
  \label{second.hyp}
  \item For $\aa \in \{+,-\}$, the width $\bar d_\aa$ and porosity $\bar\phi_\aa$ of the damaged rock are positive constants.
  {%
  \item The fracture aperture at contact state satisfies $d_0 > 0$ and is continuous over the fracture network $\Gamma$, with zero limits at $\partial\Gamma\setminus (\partial\Gamma \cap \partial\Omega)$ and strictly positive limits at $\partial\Gamma\cap\partial\Omega$.
  }
\item $b\in [0,1]$ is the Biot coefficient, $M\in (0,+\infty]$ is the Biot modulus, and $E >0$, $-1<\nu<1/2$, {and $F\ge 0$} are Young's modulus, Poisson's ratio, {and friction coefficient, respectively}. These coefficients are assumed constant {to alleviate technicalities in the analysis,} and $1/M$ is interpreted as $0$ when $M=+\infty$ (incompressible rock).
\item The initial matrix porosity satisfies $\bar\phi_m^0 \in L^\infty(\O)$ and $\bar\phi_m^0 \geq 0$. 
\item The initial pressures are such that $p^\alpha_{0,m}\in L^\infty(\Omega)$ and $\bar p^\alpha_{0,f}\in L^\infty(\Gamma)$, $\alpha\in \{\g,\l\}$. 
\item The source terms satisfy $\mathbf f \in L^2(\Omega)^d$, $h_m^\alpha\in L^2((0,T)\times \Omega),$ and $h_f^\alpha \in L^2((0,T)\times \Gamma)$.
  \item The normal fracture transmissibility $\Lambda_f\in L^\infty(\Gamma)$ is uniformly bounded from below by a strictly positive constant. 
 \item The matrix permeability tensor $\K_{m}\in L^\infty(\Omega)^{d\times d}$ is symmetric and uniformly elliptic.
 \label{last.hyp}
\end{enumerate}

\subsection{{Variational formulation}}

Following~\cite{Wohlmuth11}, the poromechanical model  with Coulomb frictional contact is formulated in mixed form using a vector Lagrange multiplier $\bar{\boldsymbol{\lambda}}: \Gamma \to \R^d$ at matrix--fracture interfaces.
Denoting for $r\in\{1,d\}$ the duality pairing of $H^{-1/2}(\Gamma)^r$ and $H^{1/2}(\Gamma)^r$ by $\HGbracket{\cdot}{\cdot}$, we define the dual cone 
\begin{align*}
\bm{C}_f(\bar\lambda_n) = \Big\{\bar{\boldsymbol{\mu}}{} \in  (H^{-1/2}(\Gamma))^d \,:\,
 \HGbracket{\bar{\boldsymbol{\mu}}}{\bar{\bv}} \leq \HGbracket{F \bar\lambda_n}{|\bar{\bv}_\tau|} \mbox{ for all } \bar{\bv} \in (H^{1/2}(\Gamma))^d \mbox{ with } \bar v_n \leq 0\Big\}. 
\end{align*}

The Lagrange multiplier formulation of \eqref{eq_edp_meca} then formally reads, dropping any consideration of regularity in time: find $\bar\bu:[0,T]\to \U_0$ and $\bar{\boldsymbol{\lambda}}=(\bar{\lambda}_n,\bar{\boldsymbol{\lambda}}_\tau) :[0,T]\to \bm{C}_f(\bar{\lambda}_n)$ such that for all $\bar\bv:[0,T]\to \U_0$ and $\bar{\boldsymbol{\mu}} = (\bar{\mu}_n,\bar{\boldsymbol{\mu}}_\tau) :[0,T]\to \bm{C}_f(\bar{\lambda}_n)$, one has
\begin{equation}
  \label{Lagrange_meca_contactfriction}
\begin{aligned}
 & \dsp \int_\O \( \bbsig(\bar \bu): \bbeps(\bar \bv) - b ~\bar p_m^E \div(\bar \bv)\) \d\x
+  \HGbracket{\bar{\boldsymbol{\lambda}}}{\jump{\bar \bv}} 
+ \int_\G \bar p_f^E ~\jump{\bar \bv}_n  ~\d\sigma 
\dsp =  \int_\Omega \mathbf{f}\cdot \bar\bv ~\d\x,\\[.8em]
& \dsp \HGbracket{\bar{\mu}_n-\bar{\lambda}_n}{\jump{\bar \bu}_n} + \HGbracket{\bar{\boldsymbol{\mu}}_\tau-\bar{\boldsymbol{\lambda}}_\tau}{ \jump{\partial_t \bar \bu}_\tau}\leq 0.   
\end{aligned}
\end{equation}
Note that, based on the variational formulation, the Lagrange multiplier satisfies $\bar{\boldsymbol{\lambda}} = -{\bf T}^+ = {\bf T}^-$.

{
Dropping again any consideration of time regularity, the variational formulation of the two-phase Darcy flow model can be formulated as follows: find $(\bar p^\alpha_m, \bar p^\a_f) : (0,T) \to V^0$, $\alpha\in \{\g,\l\}$ such that, for any $\alpha\in \{\g,\l\}$, $\bar{d}_f^{\;\nf 3 2}\nabla_\tau\bar p_f^\alpha:  (0,T)\to L^2(\G)^{d-1}$ and, for all $(\bar \varphi^\alpha_m,\bar\varphi^a_f): (0,T) \to V^0$, 
\begin{equation}
\label{eq_var_hydro}      
\left.
\begin{array}{ll}
&\dsp \int_\O \(\partial_t \(\bar \phi_m S^{\a}_m(\bar p_{c,m})\) \bar \varphi_m^\a  + \eta^\a_m ( S^{\a}_m(\bar p_{c,m})) \K_m  \nabla \bar p^\a_m \cdot \nabla \bar \varphi^\a_m\) \d\x  \\[2ex]
  &   + \dsp \int_\G \(\partial_t \( \bar d_f S^{\a}_f(\bar p_{c,f})\) \bar \varphi^\a_f  + \eta^\a_f ( S^{\a}_f(\bar p_{c,f})) {\bar d_f^{\;3}\over 12}  \nabla_\tau \bar p^\a_f \cdot \nabla_\tau \bar \varphi^\a_f \) \d\sigma \\[2ex]
  &   + \dsp \dsp \sum_{\aa=\pm} \int_\G  \( \bar Q^\a_{f,\aa}  \jump{\bar \varphi^\a}_\aa
    + \bar d_\aa \bar \phi_\aa \partial_t S^{\a}_\aa( \gamma_\aa \bar p_{c,m}) \gamma_\aa \bar \varphi^\a_m\) \d\sigma = \dsp  \int_\O h_m^\a \bar \varphi^\a_m \d\x  +  \int_\G h_f^\a \bar \varphi^\a_f  ~\d\sigma.
\end{array}
\right.
\end{equation}
The coupled model amounts to finding $(\bar p^\alpha_m, \bar p^\a_f)$, $\alpha\in \{\g,\l\}$, $\bar\bu$ and $\bar{\boldsymbol{\lambda}}$ satisfying the variational formulations \eqref{Lagrange_meca_contactfriction} and \eqref{eq_var_hydro} as well
as the first, third and fourth closure laws in \eqref{closure_laws} and the initial conditions \eqref{initial_conditions}. Note that the initial displacement $\bar\bu^0$ and Lagrange multiplier $\bar{\boldsymbol{\lambda}}^0$ are  the solution of
\eqref{Lagrange_meca_contactfriction} without the time variable and with the equivalent pressures obtained from the initial pressures $\bar p^\alpha_{0,\omega}$, with ${\alpha\in\{\g,\l\}}$ and ${\omega\in\{m,f\}}$.
}

\begin{remark}[Modified two-phase flow model with partial linearization of the matrix accumulation term]\label{rem:modified2phase} 
  A classical modification of this two-phase flow model consists in applying a partial linearization of the matrix accumulation term in \eqref{eq_edp_hydro}, in the spirit of~\cite{both2019global} for the case of an unsaturated poroelastic model based on the Richards equation. Motivated by the small porosity variations assumption~\cite{coussy} for linear poroelasticity, the term $\partial_t ( \bar \phi_m S^\alpha_m(\bar p_{c,m}))$ is replaced by
  \begin{equation}
    \label{ACC_PL}
  \bar \phi^0_m \partial_t  S^\alpha_m(\bar p_{c,m}) + S^\alpha_m(\bar p_{c,m}) \partial_t \bar \phi_m. 
   \end{equation}
  To establish an energy estimate in the subsequent analysis, this modified model must be combined with a new definition of the matrix equivalent pressure, in which the capillary energy density function is removed: 
  \begin{equation}
    \label{PE_WA}
  \bar p^E_m = \sum_{\alpha\in \{\g,\l\}} \bar p^\alpha_m S^\alpha_m(\bar p_{c,m}),  
  \end{equation}
  leading formally to
  $$
\sum_{\alpha\in \{\g,\l\}} \bar p^\alpha_m \partial_t \( \bar\phi_m S^{\a}_m(\bar p_{c,m}) \) - b \bar p^E_m \div(\partial_t \bu) = \partial_t \( \bar \phi^0_m U_m(\bar p_{c,m}) + {1\over 2 M} (\bar p^E_m)^2 \). 
$$
Note that the fracture equivalent pressure is unchanged and that both two-phase models degenerate to the same \mbox{single-phase} model. 
  The main advantage of this modified model is to guarantee the positivity of the porosity term in front of the time derivative of the saturation term (see Remark \ref{rem:EEmodified2phase}). As will be seen in the stability analysis, this positivity needs to be assumed in the analysis of the original model \eqref{eq_edp_hydro}--\eqref{eq_edp_meca}. We however notice that the choice of the equivalent pressure \eqref{PE_WA} is not as physically relevant as the original one for strong capillary effects~\cite{coussy,KTJ2013}, which are a feature of our main application.
\end{remark}

\section{The gradient discretization method}\label{sec:gradientscheme}
\subsection{Gradient discretizations}
The gradient discretization (GD) for the Darcy discontinuous pressure model, introduced in~\cite{DHM16}, is defined by a finite-dimensional vector space of discrete unknowns $X^0_{\D_p} = X^0_{\D_p^m} \times X^0_{\D_p^f}$
and 
\begin{itemize}
\item  two discrete gradient linear operators on the matrix and fracture domains
$$
 \nabla_{\D_p}^m : X^0_{\D_p^m} \rightarrow L^\infty(\Omega)^d, \quad \quad 
   \nabla_{\D_p}^f : X^0_{\D_p^f} \rightarrow L^\infty(\Gamma)^{d-1},
$$ 
\item two function reconstruction linear operators on the matrix and fracture domains 
$$
\Pi_{\D_p}^m : X^0_{\D_p^m} \rightarrow L^\infty(\Omega),\quad\quad \Pi_{\D_p}^f : X^0_{\D_p^f} \rightarrow L^\infty(\Gamma),
$$
\item for $\aa \in \{+,-\}$, jump reconstruction linear operators $\jump{\cdot}^\aa_{\D_p}$: $X^0_{\D_p} \rightarrow L^\infty(\Gamma)$,  and trace reconstruction linear operators $\mathbb{T}^\aa_{\D_p}$: $X^0_{\D_p^m} \rightarrow L^\infty(\Gamma)$.
\end{itemize}  
The operators $\Pi_{\D_p}^m$, $\Pi_{\D_p}^f$, $\mathbb{T}^\aa_{\D_p}$ are assumed to be \emph{piecewise constant}~\cite[Definition 2.12]{gdm}. 
The vector space $X^0_{\D_p}$ is endowed with the following quantity, assumed to define a norm:
\begin{equation}\label{def:XDp.norm}
\displaystyle  \|\nabla_{\D_p}^m v\|_{L^2(\Omega)^d} 
+ \|d_{0}^{\nf 3 2}\nabla_{\D_p}^f v\|_{ L^2(\Gamma)^{d-1} } + \sum_{\aa \in \{+,-\}} \|\jump{v}^\aa_{\D_p}\|_{L^2(\Gamma)}.
\end{equation}

As usual in the GDM framework~\cite[Part III]{gdm}, various choices of spaces and operators above lead to various numerical methods for the flow component of the model. It covers the case of cell-centered finite volume schemes with Two-Point Flux Approximation on strongly admissible meshes \cite{KDA04,ABH09,gem.aghili}, or some symmetric Multi-Point Flux Approximations \cite{TFGCH12,SBN12,AELHP153D} on tetrahedral or hexahedral meshes. It also accounts for the families of Mixed Hybrid Mimetic and Mixed or Mixed Hybrid Finite Element discretizations such as in \cite{MJE05,BHMS2016,AFSVV16,Girault2019}, and for vertex-based discretizations such as  the Vertex Approximate Gradient scheme \cite{BHMS2016,DHM16,BHMS2018}. For the discretization of the mechanical component of the system, with Coulomb frictional contact, we however restrict ourselves to conforming methods for the displacement (usual methods for elasticity models), and piecewise constant spaces for the Lagrange multipliers. We therefore take a finite-dimensional space
$$
X_{\D_\bu} = X^0_{\D_\bu^m}\times  X_{\D_\bu^f} \ \mbox{ with } \ X^0_{\D_\bu^m}\subset\U_0
\quad \mbox{ and }\quad X_{\D_\bu^f} = \{\boldsymbol{\mu}=(\boldsymbol{\mu}_\sigma)_{\sigma\in\faces_{\D_\bu}}\,:\,\boldsymbol{\mu}_\sigma\in \R^d\},
$$
where $\faces_{\D_\bu}$ is a partition of $\Gamma$ assumed to be conforming with the partition $\{\Gamma_i, i\in I\}$ of $\Gamma$ in planar fractures. We will also identify $\boldsymbol{\mu}\in X_{\D_\bu^f}$ with the piecewise constant function $\boldsymbol{\mu}:\Gamma\to\R^d$ defined by $\boldsymbol{\mu}_{|\sigma}=\boldsymbol{\mu}_\sigma$ for all $\sigma\in \faces_{\D_\bu}$. As previously, it will be useful to separate the normal and tangential components of elements in $X_{\D_\bu^f}$ and we thus set, for $\boldsymbol{\mu}$ such an element,
$$
\mu_{n,\sigma} = \boldsymbol{\mu}\cdot \n_\sigma, \quad \boldsymbol{\mu}_{\tau,\sigma} = \boldsymbol{\mu}_\sigma - \mu_{n,\sigma} \n_\sigma, \quad \mu_n = (\mu_{n,\sigma})_{\sigma  \in\faces_{\D_\bu}},
\quad \boldsymbol{\mu}_\tau = (\boldsymbol{\mu}_{\tau,\sigma})_{\sigma  \in\faces_{\D_\bu}},
$$
where $\n_\sigma = \n^+$ is the constant unit normal vector on $\sigma$ oriented outward from the side $+$. We also identify $\mu_n$ and $\boldsymbol{\mu}_\tau$ with the corresponding piecewise constant functions on $\Gamma$.

Let us define the discrete dual cone
$$
\bm{C}_{\D_\bu^f}(\lambda_n) = \{\boldsymbol{\mu} = (\mu_n,\boldsymbol{\mu}_\tau)\in X_{\D_\bu^f}  \,:\, \mu_{n,\sigma}\geq 0, \, |\boldsymbol{\mu}_{\tau,\sigma}| \leq F \lambda_{n,\sigma} \quad\forall \sigma \in\faces_{\D_\bu}\}.  
$$

A spatial GD can be extended into a space-time GD by complementing it with
\begin{itemize}
\item 
a discretization $ 0 = t_0 < t_1 < \dots < t_N = T $ of the time interval $[0,T]$,
\item
interpolators $I^m_{\D_p} \colon L^\infty(\Omega)\rightarrow X^0_{\D_p^m}$, $I^f_{\D_p} \colon L^\infty(\Gamma)\rightarrow X^0_{\D_p^f}$, and $J^m_{\D_p} \colon L^2(\Omega)\rightarrow X^0_{\D_p^m}$ of initial conditions.
\end{itemize}
For $k\in\{0,\ldots,N\}$, we denote by $\dtk = t_{k+1}-t_k$ the time steps, and by $\Delta t = \max_{k=0,\ldots,N} \dtk$ the maximum time step. 

Spatial operators are extended into space-time operators as follows. Let $\Psi_\D$ be a spatial GDM operator defined in $X_\D^0$ with $\D=\D_p^m$ or $\D_p^f$,  and let $w=(w_k)_{k=0}^{N}\in (X^0_{\D})^{N+1}$. Then, its space-time extension is defined by 
$$
\Psi_{\D}w(0,\cdot) = \Psi_{\D}w_0 \mbox{ and, } \forall k\in\{0,\dots,N-1\}\,,\;\forall t\in (t_k,t_{k+1}],\,\;
\Psi_{\D}w(t,\cdot) = \Psi_{\D}w_{k+1}.
$$
For convenience, the same notation is kept for the spatial and space-time operators. Similarly, we identify $(X_{\D_\bu^m}^0)^{N+1}$ and $(X_{\D_\bu^f})^{N+1}$, respectively, with the spaces of piecewise constant functions $[0,T]\to X_{\D_\bu^m}^0$ and $[0,T]\to X_{\D_\bu^f}$; so, for example, if $\bu=(\bu^k)_{k=0,\ldots,N}\in (X_{\D_\bu^m}^0)^{N+1}$, we set $\bu(0)=\bu^0$ and $\bu(t)=\bu^{k+1}$ for all $t\in (t_k,t_{k+1}]$ and $k=0,\ldots,N-1$.
Moreover, we define the discrete time derivative as follows: for $f:[0,T]\to E$, with $E$ a vector space, piecewise constant on the time discretization, with $f_k=f_{|(t_{k-1},t_k]}$ and $f_0=f(0)$, we set $\delta_t f (t) = \frac{f_{k+1} - f_k}{\dtk}$ for all  $t\in (t_k,t_{k+1}]$, $k\in\{0,\ldots,N-1\}$. Note that it will correspond to the Euler implicit time integration in the following gradient scheme formulation. 
  
\subsection{Gradient scheme}
First, let us define, for all $\sigma \in \faces_{\D_\bu}$ the displacement average on the side $\aa \in \{+,-\}$ of $\sigma$ by 
\begin{equation}
\label{average_jump}
{\bf u}_\sigma^\aa = {1\over |\sigma|}\int_\sigma \gamma_\aa{\bf u}(\x)  \d\sigma.
\end{equation}
The displacement jump average and its normal and tangential components are
$$
\jump{\bu}_{\sigma}  =  {\bf u}_\sigma^+ - {\bf u}_\sigma^-, \quad
\jump{\bu}_{n,\sigma} = \jump{\bu}_{\sigma} \cdot {\bf n}^+\quad
\mbox{and}\quad
\jump{\bu}_{\tau,\sigma}  = \jump{\bu}_{\sigma} - \jump{\bu}_{n,\sigma}  ~{\bf n}^+.   
$$
We then define the global displacement normal jump reconstruction $\jump{\bu}_{n,\faces}$ such that, for any $\sigma\in\faces_{\D_\bu}$,
$${(\jump{\bu}_{n,\faces})}_{|_\sigma} = \jump{\bu}_{n,\sigma}.$$

The gradient scheme for \eqref{eq_edp_hydro}--\eqref{eq_edp_meca}  consists in writing a discrete weak formulation obtained after a formal integration by parts in space and by replacing the continuous operators by their discrete counterparts: find $p^\alpha = (p^\a_m, p^\a_f) \in (X^0_{\D_p})^{N+1}$ for $\alpha\in \{\g,\l\}$, $\bu \in (X^0_{\D_\bu^m})^{N+1}$, and $(\boldsymbol{\lambda}^k)_{k=0,\ldots,N}$ with $\boldsymbol{\lambda}^k\in \bm{C}_{\D_\bu^f}(\lambda^k_n)$ for all $k=0,\dots,N$, such that for all $\varphi^\alpha = (\varphi^\a_m, \varphi^\a_f) \in (X_{\D_p}^0)^{N+1}$ ($\alpha\in \{\g,\l\}$), $\bv \in (X^0_{\D_\bu^m})^{N+1}$, and $\boldsymbol{\mu}=(\boldsymbol{\mu}^k)_{k=1,\ldots,N} \in \bigtimes_{k=1}^N\bm{C}_{\D_\bu^f}(\lambda^k_n)$,
\begin{subequations}\label{eq:GS}
\begin{equation}
\begin{aligned}
  {}&\int_0^T \int_\Omega \( \delta_t \(\phi_\D \Pi_{\D_p}^m s^\alpha_m \)\Pi_{\D_p}^m \varphi_m^\alpha 
  +  \eta_m^\alpha(\Pi_{\D_p}^m s_m^\alpha) \K_m \nabla_{\D_p}^m p^\alpha_m \cdot   \nabla_{\D_p}^m \varphi^\alpha_m \) \d\x \d t\\
   &+ \int_0^T \int_\Gamma \delta_t \(d_{f,\D_\bu} \Pi_{\D_p}^f s^\alpha_f \)\Pi_{\D_p}^f \varphi^\alpha_f \d\sigma\d t
   + \int_0^T \int_\Gamma  \eta_f^\alpha(\Pi_{\D_p}^f s_f^\alpha) {d_{f,\D_\bu}^3 \over 12} \,\, \nabla_{\D_p}^f p^\alpha_f \cdot   \nabla_{\D_p}^f \varphi^\alpha_f  \d\sigma \d t\\
   &+ \sum_{\aa \in \{+,-\}}
  \int_0^T \int_\G  \( Q^\a_{f,\aa}  \jump{\varphi^\a}^\aa_{\D_p}   
  +   \bar d_\aa \bar \phi_\aa \delta_t \(\mathbb{T}^\aa_{\D_p} s^{\a}_\aa \) \mathbb{T}^\aa_{\D_p}\varphi^\a_m\) \d\sigma \d t \\
  &\qquad =  \int_0^T \int_\Omega h_m^\alpha \Pi_{\D_p}^m \varphi^\alpha_m \d\x \d t + \int_0^T \int_\Gamma h_f^\alpha \Pi_{\D_p}^f \varphi^\alpha_f \d\sigma\d t,
  \label{GD_hydro}
\end{aligned}
\end{equation}
\medskip
\begin{equation}
\begin{aligned}
   \int_0^T  \int_\Omega {}&\( \bbsig(\bu) : \bbeps(\bv)  
    - b ~\Pi_{\D_p}^m p_m^E~  \div\,\bv\)  \d\x \d t
   +  \int_0^T \int_\Gamma  \! \boldsymbol{\lambda} \cdot  \jump{\bv}~\d\sigma\d t\\
     &
     +  \int_0^T  \int_\Gamma \!\! \Pi_{\D_p}^f p_f^E~  \jump{\bv}_{n,\faces} ~\d\sigma\d t 
  = \int_0^T  \int_\Omega \!\! \mathbf{f} \cdot  \bv ~\d\x \d t,
\end{aligned}
\label{GD_meca}
\end{equation}
\begin{equation}
    \int_0^T \int_\Gamma (\mu_n - \lambda_n) \jump{\bu}_{n}~ \d\sigma\d t + \int_0^T \int_\Gamma(\boldsymbol{\mu}_\tau - \boldsymbol{\lambda}_\tau) \cdot \delta_t \jump{\bu}_\tau \d\sigma \d t\leq 0,  
  \label{GD_meca_var}
\end{equation}
with the closure equations, for $\omega\in \{m,f\}$ and $\aa\in\{+,-\}$,
\begin{equation}
  \left\{\!\!\!\!\begin{array}{ll}
  & Q^\a_{f,\aa} = \Lambda_f \[ \eta^\a_\aa(\mathbb{T}^\aa_{\D_p} s^\a_\aa)  (\jump{p^\a}^\aa_{\D_p})^+  -
  \eta^\a_f(\Pi^f_{\D_p}s^\a_f)  (\jump{p^\a}_{\D_p}^\aa)^- \],\\[2ex]
  & p_{c,\omega} = p^\g_\omega - p^\l_\omega,  \quad s^\alpha_\omega = S^\alpha_\omega(p_{c,\omega}), \quad s^\a_\aa = S^\a_\aa(p_{c,m}),\\[2ex]
  & \dsp p_\omega^E = \sum_{\alpha\in\{\g,\l\}}  p^\alpha_\omega s^\alpha_\omega - U_\omega(p_{c,\omega}),\\[4ex]
  &  \phi_{\D} - \Pi_{\D_p}^m  \phi_m^0 = b ~\div(\bu-\bu^0) + {1\over M} \Pi_{\D_p}^m (p_m^E-p_m^{E,0}),\\[2ex]
  & d_{f,\D_\bu} = d_0 - \jump{\bu}_{n,\faces}.
  \end{array}\right.
  \label{GD_closures}
\end{equation}
\end{subequations}
The initial conditions are given by $p^\a_{0,\omega} = I^\omega_{\D_p} \bar p^\a_{0,\omega}$ ($\a\in\{\g,\l\}$, $\omega\in \{m,f\}$), $\phi_m^0 = J_{\D_p}^m \bar \phi^0$, and the initial displacement $\bu^0$ and Lagrange multiplier $\boldsymbol{\lambda}^0$ are  the solution in $X_{\D_\bu^m}\times \bm{C}_{\D_\bu^f}(\lambda^0_n)$ of
\eqref{GD_meca} without the time variable and with the equivalent pressures obtained from the initial pressures $(p^\alpha_0)_{\alpha\in\{\g,\l\}}$.

\subsubsection{Formulation in local Coulomb frictional contact conditions}

We provide here a local reformulation of the variational condition \eqref{GD_meca_var}.

\begin{lemma}[Local Coulomb frictional contact conditions]\label{lem:local.coulomb}
Let $\boldsymbol{\lambda} \in \bm{C}_{\D_\bu^f}(\lambda_n)$ and $\bu\in (X_{\D^m_\bu}^0)^{N+1}$. Then $(\bu,\boldsymbol{\lambda})$ satisfy the variational inequality \eqref{GD_meca_var} if and only if the following local frictional contact conditions hold on $[0,T]$ and for any $\sigma\in \faces_{\D_\bu}$:
\begin{subequations}
\label{fracture_frictionconditions}
\begin{align}
\label{fracture_frictionconditions.1}
  & \lambda_{n,\sigma} \geq 0, \,\,   \jump{\bu}_{n,\sigma} \leq 0, \,\, \jump{\bu}_{n,\sigma}  \lambda_{n,\sigma} = 0, \\
\label{fracture_frictionconditions.2}
  & | \boldsymbol{\lambda}_{\tau,\sigma}| \leq  F \lambda_{n,\sigma}, \\
\label{fracture_frictionconditions.3}
  &  \jump{\delta_t\bu}_{\tau,\sigma} \cdot \boldsymbol{\lambda}_{\tau,\sigma}  - F \lambda_{n,\sigma} |\jump{\delta_t\bu}_{\tau,\sigma}|=0. 
\end{align}
\end{subequations}
\end{lemma}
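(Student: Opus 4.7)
The equivalence is local-in-time and local-in-face, so my plan is to first exploit the piecewise-constant structure in time and on the mesh $\faces_{\D_\bu}$ of the test function $\boldsymbol{\mu}$ and of $\boldsymbol{\lambda}$ to decouple \eqref{GD_meca_var} into a family of independent scalar/vector inequalities indexed by $(k,\sigma)$, then to split each such inequality into its normal and tangential components, exploiting the fact that the constraint $\boldsymbol{\mu}^k\in \bm{C}_{\D_\bu^f}(\lambda^k_n)$ only couples $\boldsymbol{\mu}_{\tau,\sigma}^k$ with $\lambda_{n,\sigma}^k$ (not with $\mu_{n,\sigma}^k$).

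More precisely, noting that all functions are piecewise constant in time on the discretization $0=t_0<\cdots<t_N=T$, and recalling that $\delta_t \jump{\bu}_\tau(t)=(\jump{\bu}_\tau^{k+1}-\jump{\bu}_\tau^k)/\dtk$ on $(t_k,t_{k+1}]$, the left-hand side of \eqref{GD_meca_var} rewrites as
\begin{equation*}
\sum_{k=0}^{N-1}\sum_{\sigma\in\faces_{\D_\bu}} |\sigma|\Bigl[\dtk(\mu_{n,\sigma}^{k+1}-\lambda_{n,\sigma}^{k+1})\jump{\bu}^{k+1}_{n,\sigma} + (\boldsymbol{\mu}^{k+1}_{\tau,\sigma}-\boldsymbol{\lambda}^{k+1}_{\tau,\sigma})\cdot(\jump{\bu}^{k+1}_{\tau,\sigma}-\jump{\bu}^k_{\tau,\sigma})\Bigr] \leq 0.
\end{equation*}
Since the constraints defining $\bm{C}_{\D_\bu^f}(\lambda^{k+1}_n)$ act independently on each face $\sigma$ and, at fixed $\lambda_n^{k+1}$, on each time step $k+1$, picking test functions $\boldsymbol{\mu}$ equal to $\boldsymbol{\lambda}$ except at one chosen $(k+1,\sigma)$ shows the inequality must hold face- and timestep-wise.

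Fix then $k+1$ and $\sigma$ and argue in two independent steps. First, keep $\boldsymbol{\mu}_{\tau,\sigma}^{k+1}=\boldsymbol{\lambda}_{\tau,\sigma}^{k+1}$ (admissible since this preserves $|\boldsymbol{\mu}_{\tau,\sigma}^{k+1}|\le F\lambda_{n,\sigma}^{k+1}$) and let $\mu_{n,\sigma}^{k+1}$ range over $[0,+\infty)$: the linear inequality $(\mu_{n,\sigma}^{k+1}-\lambda_{n,\sigma}^{k+1})\jump{\bu}^{k+1}_{n,\sigma}\le 0$ for all $\mu_{n,\sigma}^{k+1}\ge 0$ forces $\jump{\bu}^{k+1}_{n,\sigma}\le 0$ (by letting $\mu_{n,\sigma}^{k+1}\to+\infty$) and $\lambda_{n,\sigma}^{k+1}\jump{\bu}^{k+1}_{n,\sigma}=0$ (by taking $\mu_{n,\sigma}^{k+1}=0$, combined with the sign information); together with $\lambda_{n,\sigma}^{k+1}\ge0$ coming from $\boldsymbol{\lambda}^{k+1}\in \bm{C}_{\D_\bu^f}(\lambda_n^{k+1})$, this is exactly \eqref{fracture_frictionconditions.1}. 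The bound \eqref{fracture_frictionconditions.2} is also a direct consequence of $\boldsymbol{\lambda}^{k+1}\in \bm{C}_{\D_\bu^f}(\lambda_n^{k+1})$.

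Second, keep $\mu_{n,\sigma}^{k+1}=\lambda_{n,\sigma}^{k+1}$ and vary $\boldsymbol{\mu}_{\tau,\sigma}^{k+1}$ over the closed ball $\{|\boldsymbol{\mu}|\le F\lambda_{n,\sigma}^{k+1}\}$: we obtain
$$
\boldsymbol{\lambda}^{k+1}_{\tau,\sigma}\cdot \jump{\delta_t\bu}^{k+1}_{\tau,\sigma} \;=\; \max_{|\boldsymbol{\mu}|\le F\lambda_{n,\sigma}^{k+1}} \boldsymbol{\mu}\cdot \jump{\delta_t\bu}^{k+1}_{\tau,\sigma} \;=\; F\lambda_{n,\sigma}^{k+1}|\jump{\delta_t\bu}^{k+1}_{\tau,\sigma}|,
$$
which is \eqref{fracture_frictionconditions.3}. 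The converse direction is immediate: summing the local pointwise relations \eqref{fracture_frictionconditions} against any admissible test $\boldsymbol{\mu}^{k+1}\in\bm{C}_{\D_\bu^f}(\lambda_n^{k+1})$ reconstructs \eqref{GD_meca_var}, using the Cauchy--Schwarz bound $\boldsymbol{\mu}_{\tau,\sigma}^{k+1}\cdot \jump{\delta_t\bu}^{k+1}_{\tau,\sigma}\le F\lambda_{n,\sigma}^{k+1}|\jump{\delta_t\bu}^{k+1}_{\tau,\sigma}|$ in the tangential contribution.

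The only subtle point—rather than a genuine obstacle—is that the cone $\bm{C}_{\D_\bu^f}(\lambda_n^{k+1})$ depends on the unknown $\lambda_n^{k+1}$, so when we vary only the tangential component of $\boldsymbol{\mu}^{k+1}$ we must keep the normal component pinned so as to remain in the admissible cone; conversely, varying $\mu_{n,\sigma}^{k+1}$ freely in $[0,+\infty)$ does not alter the tangential constraint, which is why the decoupling into \eqref{fracture_frictionconditions.1}--\eqref{fracture_frictionconditions.3} goes through cleanly.
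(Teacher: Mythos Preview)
Your proposal is correct and follows essentially the same approach as the paper: localize \eqref{GD_meca_var} to a single time step and face by choosing $\boldsymbol{\mu}=\boldsymbol{\lambda}$ elsewhere, then decouple normal and tangential components by freezing one while varying the other within the cone, and finally obtain the converse by direct estimates using $|\boldsymbol{\mu}_{\tau,\sigma}|\le F\lambda_{n,\sigma}$. The only cosmetic difference is that the paper extracts \eqref{fracture_frictionconditions.3} by testing with a single explicit $\boldsymbol{\mu}_\tau = F\lambda_{n,\sigma}\boldsymbol{e}$ aligned with $\jump{\delta_t\bu}_{\tau,\sigma}$, whereas you phrase it as a maximization over the ball; both yield the same identity.
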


\begin{proof}
We first notice that, by selecting $\boldsymbol{\mu}\in \bm{C}_{\D_\bu^f}(\lambda_n)$ equal to $\boldsymbol{\lambda}$ for all time steps and all faces in $\faces_{\D_\bu}$ except one time step and one face (which is possible since $\boldsymbol{\lambda}\in \bm{C}_{\D_\bu^f}(\lambda_n)$), \eqref{GD_meca_var} implies: for all $t\in[0,T]$, all $\sigma\in\faces_{\D_\bu}$ and all $\boldsymbol{\mu}\in\R^d$ such that $\mu_n\ge 0$ and $|\boldsymbol{\mu}_\tau|\le F\lambda_{n,\sigma}(t)$,
\begin{equation}\label{eq:coulomb.local}
(\mu_n-\lambda_{n,\sigma}(t))\jump{\bu}_{n,\sigma}(t) + (\boldsymbol{\mu}_\tau-\boldsymbol{\lambda}_{\tau,\sigma}(t))\cdot\delta_t\jump{\bu}_{\tau,\sigma}(t)\le 0.
\end{equation}
Conversely, the local relations \eqref{eq:coulomb.local} clearly imply the integrated variational inequality \eqref{GD_meca_var}. 

From hereon, we drop the explicit mention of the time $t$ for simplicity. Choosing $\boldsymbol{\mu}_\tau=\boldsymbol{\lambda}_{\tau,\sigma}$ we see that \eqref{eq:coulomb.local} implies $(\mu_n-\lambda_{n,\sigma})\jump{\bu}_{n,\sigma}\le 0$ for all $\mu_n\ge 0$. This shows that the linear map $\mu_n \mapsto \mu_n\jump{\bu}_{n,\sigma}-\lambda_{n,\sigma}\jump{\bu}_{n,\sigma}$ is negative on $[0,+\infty)$, which forces both its slope $\jump{\bu}_{n,\sigma}$ and its intercept $-\lambda_{n,\sigma}\jump{\bu}_{n,\sigma}$ to be negative; hence $\jump{\bu}_{n,\sigma}\le 0$ and, since $\lambda_{n,\sigma}\ge 0$, the condition $-\lambda_{n,\sigma}\jump{\bu}_{n,\sigma}\le 0$ implies $\lambda_{n,\sigma}\jump{\bu}_{n,\sigma}=0$. This concludes the proof of \eqref{fracture_frictionconditions.1}. 

The condition \eqref{fracture_frictionconditions.2} comes from $\boldsymbol{\lambda} \in \bm{C}_{\D_\bu^f}(\lambda_n)$. To prove \eqref{fracture_frictionconditions.3}, we select $\mu_n=\lambda_{n,\sigma}$ and $\boldsymbol{\mu}_\tau= F\lambda_{n,\sigma}\boldsymbol{e}$ where $\boldsymbol{e}$ is a unit vector such that $\boldsymbol{e}\cdot \delta_t\jump{\bu}_{\tau,\sigma}=|\delta_t\jump{\bu}_{\tau,\sigma}|$
(this choice of $\boldsymbol{\mu}$ is valid since $\boldsymbol{\lambda}\in \bm{C}_{\D_\bu^f}(\lambda_n)$). Then \eqref{eq:coulomb.local} implies $F\lambda_{n,\sigma}|\delta_t\jump{\bu}_{\tau,\sigma}|-\boldsymbol{\lambda}_{\tau,\sigma}\cdot\delta_t\jump{\bu}_{\tau,\sigma}\le 0$. But the converse inequality is trivially true since $|\boldsymbol{\lambda}_{\tau,\sigma}|\le F\lambda_{n,\sigma}$, which proves \eqref{fracture_frictionconditions.3}.

This concludes the proof that \eqref{GD_meca_var} implies \eqref{fracture_frictionconditions}. The converse implication is obtained by direct estimates: under \eqref{fracture_frictionconditions}, for $\boldsymbol{\mu}\in \bigtimes_{k=1}^N \bm{C}_{\D_\bu^f}(\lambda_n)$ we have
$\mu_n\ge 0$ and $|\boldsymbol{\mu}_\tau|\le F\lambda_n$ on $[0,T]\times\Gamma$ and thus
\begin{align*}
\int_0^T \int_\Gamma{}& (\mu_n - \lambda_n) \jump{\bu}_{n} \d\sigma\d t + \int_0^T \int_\Gamma(\boldsymbol{\mu}_\tau - \boldsymbol{\lambda}_\tau) \cdot \delta_t \jump{\bu}_\tau \d\sigma \d t\\
=\int_0^T &{}\int_\Gamma \underbrace{\mu_n \jump{\bu}_{n}}_{\le 0} \d\sigma\d t + \int_0^T \sum_{\sigma\in\faces_{\D_\bu}} \(\underbrace{\boldsymbol{\mu}_{\tau,\sigma}\cdot \delta_t \jump{\bu}_{\tau,\sigma} - F\lambda_{n,\sigma}|\delta_t \jump{\bu}_{\tau,\sigma}|}_{\le 
|\boldsymbol{\mu}_{\tau,\sigma}|\,|\delta_t \jump{\bu}_{\tau,\sigma}| - F\lambda_{n,\sigma}|\delta_t \jump{\bu}_{\tau,\sigma}|\le 0} \) |\sigma|\d t.
\qquad\qedhere
\end{align*}
\end{proof}

\subsection{Energy estimates for the two-phase model, and existence of a solution}

We assume here that the gradient discretizations we consider for the flow are \emph{coercive}~\cite{GDM-poromeca-disc}, that is: there exists $c^*>0$ independent of $\D_p$ such that, for all $v=(v_m,v_f)\in X_{\D_p}^0$,
\begin{multline}\label{eq:GD.coercif}
\|\Pi^m_{\D_p} v_m\|_{L^2(\Omega)} + \|\Pi^f_{\D_p} v_f \|_{L^2(\Gamma)} + \sum_{\aa \in \{+,-\}} \|\mathbb{T}^\aa_{\D_p} v_m\|_{L^2(\Gamma)}\\
\le
c^\star \Big(\|\nabla_{\D_p}^m v\|_{L^2(\Omega)^d} 
+ \|d_{0}^{\nf 3 2}\nabla_{\D_p}^f v\|_{ L^2(\Gamma)^{d-1} } + \sum_{\aa \in \{+,-\}} \|\jump{v}^\aa_{\D_p}\|_{L^2(\Gamma)}
\Big).
\end{multline}

The fracture network is assumed to be such that the Korn inequality holds on $\U_0$; in particular,
this ensures that the following expression defines a norm on $\U_0$, which is equivalent to the $H^1$-norm:
$$
\|\bv\|_{\U_0} = \|\bbeps(\bv)\|_{L^2(\Omega,\mathcal S_d(\mathbb R))}.
$$
The Korn inequality is known to hold if the boundary of each connected component of $\Omega{\setminus}\Gamma$ has a nonzero measure intersection with $\partial\Omega$ (see e.g.~\cite[Section~1.1]{ciarlet}). 

We also assume that $X_{\D_\bu^m}^0$ satisfies the following discrete inf-sup condition, in which the infimum and supremum are taken over nonzero elements of $X_{\D_\bu^f}$ and $X_{\D_\bu^m}^0$, respectively, and $c_\star$ does not depend on the mesh:
\begin{equation}\label{infsup}
\inf_{\boldsymbol{\mu}} \sup_{\bv} {\int_\Gamma \boldsymbol{\mu}\cdot\jump{\bv} \over \|{\bv}\|_{\U_0} \|\boldsymbol{\mu}\|_{H^{-1/2}(\Gamma)^d}} \geq c_{\star}  > 0. 
\end{equation}
We note that this inf-sup condition holds if $\D_\bu^m$ corresponds to the conforming $\P_1$ bubble or $\P_2$ finite elements on a regular triangulation of $\Omega\backslash\Gamma$, and $\faces_{\D_\bu}$ is made of the traces on $\Gamma$ of that triangulation. {Indeed, the proof of~\eqref{infsup} can be readily adapted from that of~\cite[Lemma~6.3]{BR2003}, obtained for a mixed $\P_1$ bubble--$\P_0$ formulation, given that the space of $\P_1$ bubble functions is a subspace of $\bb P_2$. Since a
vector Lagrange multiplier is considered here, the same arguments as in~\cite{BR2003} can be followed upon splitting the space of scalar discrete Lagrange multipliers into subspaces, each one associated with a fracture $\Gamma_i$ in the network (vanishing outside $\Gamma_i$), and then working component-wise in the local reference frame on $\Gamma_i$ for any $i\in I$.
}
\subsubsection{Energy estimates for the gradient scheme}

\begin{theorem}[Energy estimates for \eqref{eq:GS}]
If $(p^\alpha)_{\alpha\in\{\g,\l\}}$, $\bu$ and $(\boldsymbol{\lambda}^k)_{k=0,\ldots,N}$ solve the gradient scheme \eqref{eq:GS}, and if $\phi_\D\ge\phi_{\rm min}\ge 0$, then there exists $C\ge 0$ depending only on the data in Assumptions~\ref{first.hyp}--\ref{last.hyp} (except the Biot coefficient $b$, the Biot modulus $M$, and the damage rock coefficients $\bar d_\aa$, $\bar\phi_\aa$), and on $c^\star,c_\star$, such that
\begin{equation}\label{apriori.est}
\begin{aligned}
\|\grad_{\D_p}^m p_m^\alpha\|_{L^2((0,T)\times\Omega)} \le C, 
&\quad& \| d_{f,\D_\bu}^{\nf 3 2} \grad_{\D_p}^f p_f^\alpha\|_{L^2((0,T)\times\Gamma)} \le C, \\
\| \jump{p^\a}^\aa_{\D_p} \|_{L^2((0,T)\times\Gamma)} \le C,
&\quad& \bar d_\aa\bar\phi_\aa \|U_\aa(\mathbb{T}^\aa_{\D_p}p_{c,m})\|_{L^\infty(0,T;L^1(\Gamma))} \le C,\\
\phi_{\rm min}\|U_m(\Pi^m_{\D_p}p_{c,m})\|_{L^\infty(0,T;L^1(\Omega))} \le C,
&\quad&
\|d_0 U_f(\Pi^f_{\D_p}p_{c,f})\|_{L^\infty(0,T;L^1(\Gamma))} \le C,\\
\frac{1}{\sqrt{M}}\|\Pi^m_{\D_p} p^E_m\|_{L^\infty(0,T;L^2(\Omega))} \le C, 
&\quad&
\| \bbeps(\bu)\|_{L^\infty(0,T;L^2(\Omega,\S_d(\R)))} \le C,\\
\|d_{f,\D_\bu}\|_{L^\infty(0,T;L^4(\Gamma))}\le C &\quad& \|\boldsymbol{\lambda}\|_{L^2(0,T;H^{-1/2}(\Gamma))}\le C.
\end{aligned}
\end{equation}
\label{th:energy_estimates}
\end{theorem}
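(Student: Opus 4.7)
The overall strategy is the classical one for coupled poromechanical schemes: test the flow equation with the pressure, the mechanics equation with the discrete time derivative of the displacement, and combine so that the coupling terms (involving $b\,p_m^E\,\div(\bu)$ and $p_f^E\jump{\bu}_n$) cancel and become time derivatives of energy functionals via the two formal identities highlighted right after \eqref{closure_laws}. The proof proceeds through four stages.

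\textbf{Stage 1 (flow tested by pressures).} Taking $\varphi^\alpha=p^\alpha$ in \eqref{GD_hydro}, summing over $\alpha\in\{\g,\l\}$, and integrating in time, the diffusion terms are non-negative by Assumption \ref{first.hyp} and give the desired $L^2$-control of $\nabla_{\D_p}^m p_m^\alpha$, $d_{f,\D_\bu}^{\nf 3 2}\nabla_{\D_p}^f p_f^\alpha$. The matrix--fracture flux term $Q^\alpha_{f,\aa}\jump{p^\alpha}^\aa_{\D_p}$ is handled by the standard monotonicity of the two-point upwind formula in the closure \eqref{GD_closures}, yielding the control of $\jump{p^\alpha}^\aa_{\D_p}$. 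For the accumulation terms, one uses the discrete convexity (chain-rule) inequality
\[
\sum_\alpha p_\omega^\alpha\,\delta_t(\Theta\, s_\omega^\alpha) \ge \delta_t\bigl(\Theta\, U_\omega(p_{c,\omega})\bigr) + \Bigl(\sum_\alpha p_\omega^\alpha s_\omega^\alpha - U_\omega(p_{c,\omega})\Bigr)\delta_t \Theta
= \delta_t(\Theta\, U_\omega)+ p_\omega^E\,\delta_t\Theta,
\]
applied with $\Theta=\phi_\D$ on the matrix and $\Theta=d_{f,\D_\bu}$ on the fractures (and similarly on the damaged layers, where $\Theta=\bar d_\aa\bar\phi_\aa$ is constant so the second term vanishes). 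This identity is the discrete analogue of the two formal identities in the paper, and is the main technical ingredient of the argument.

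\textbf{Stage 2 (mechanics tested by $\delta_t\bu$).} Setting $\bv=\delta_t\bu$ in \eqref{GD_meca}, the elastic bilinear form gives, after summation in time, a telescoping bound on $\|\bbeps(\bu)\|_{L^\infty(0,T;L^2)}$ minus an initial quantity (using that $\bbsig(\bu):\bbeps(\bv)$ is symmetric positive definite thanks to $-1<\nu<1/2$). The $b\,\Pi_{\D_p}^m p_m^E\,\div(\delta_t\bu)$ term is, using the fourth closure in \eqref{GD_closures}, exactly $\Pi_{\D_p}^m p_m^E\,\delta_t\phi_\D - \tfrac{1}{M}\Pi_{\D_p}^m p_m^E\,\delta_t\Pi_{\D_p}^m p_m^E$, whose second piece gives the $\tfrac1{2M}(\Pi_{\D_p}^m p_m^E)^2$ telescoping term; its first piece cancels, thanks to Stage 1, the $p_m^E\,\delta_t\phi_\D$ residual there. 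Likewise, the fracture term $\Pi_{\D_p}^f p_f^E\,\jump{\delta_t\bu}_{n,\faces}=-\Pi_{\D_p}^f p_f^E\,\delta_t d_{f,\D_\bu}$ cancels the fracture residual.

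\textbf{Stage 3 (Lagrange multiplier).} The term $\int_\Gamma \boldsymbol{\lambda}\cdot\jump{\delta_t\bu}\d\sigma$ coming from Stage 2 must be shown non-negative. Splitting into normal and tangential parts and using Lemma \ref{lem:local.coulomb}: the complementarity $\lambda_{n,\sigma}^{k+1}\jump{\bu}_{n,\sigma}^{k+1}=0$ with $\lambda_{n,\sigma}^{k+1}\ge 0$, $\jump{\bu}_{n,\sigma}^k\le 0$ gives $\lambda_{n,\sigma}^{k+1}\jump{\delta_t\bu}_{n,\sigma}=-\lambda_{n,\sigma}^{k+1}\jump{\bu}_{n,\sigma}^k/\dtk\ge 0$, while \eqref{fracture_frictionconditions.3} directly gives $\boldsymbol{\lambda}_{\tau,\sigma}\cdot\jump{\delta_t\bu}_{\tau,\sigma}=F\lambda_{n,\sigma}|\jump{\delta_t\bu}_{\tau,\sigma}|\ge 0$. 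So this contribution can be discarded on the good side of the inequality.

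\textbf{Stage 4 (closing the estimate and deriving each bound).} Combining Stages 1--3, the RHS source terms $\int h_m^\alpha\Pi_{\D_p}^m p_m^\alpha$, $\int h_f^\alpha \Pi_{\D_p}^f p_f^\alpha$, and $\int \mathbf{f}\cdot\delta_t\bu$ are controlled using Young's inequality together with the coercivity assumption \eqref{eq:GD.coercif} and the Korn-based norm on $\U_0$, absorbing small multiples of the diffusion and elastic norms into the LHS. The initial contributions are bounded using Assumptions \ref{last.hyp}. This yields at once all the bounds in \eqref{apriori.est} except the last two. The $L^\infty(L^4)$ bound on $d_{f,\D_\bu}=d_0-\jump{\bu}_{n,\faces}$ follows from the $L^\infty(0,T;H^1)$ control of $\bu$ (via Korn) and the trace/Sobolev embedding $H^{1/2}(\Gamma)\hookrightarrow L^4(\Gamma)$ (in both $d=2,3$). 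Finally, the bound on $\boldsymbol{\lambda}$ is obtained by rewriting \eqref{GD_meca} as $\int_\Gamma\boldsymbol{\lambda}\cdot\jump{\bv}\d\sigma=$ (already-bounded terms involving $\bbsig(\bu)$, $p_m^E$, $p_f^E$, $\mathbf f$ and $\bv$), and applying the inf-sup condition \eqref{infsup} at each time, then squaring and integrating in time.

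\textbf{Main obstacle.} The technical heart of the argument is the discrete convexity identity of Stage 1, which requires the capillary energy $U_\omega$ to play the dual role of controlling the saturation--pressure nonlinearity and producing a telescoping term along the time stepping; making this rigorous relies on the piecewise-constancy of $\Pi_{\D_p}^m,\Pi_{\D_p}^f,\mathbb{T}^\aa_{\D_p}$ to apply the scalar convex-function chain rule pointwise, and on the hypothesis $\phi_\D\ge\phi_{\min}$ to extract a useful bound on $U_m(\Pi_{\D_p}^m p_{c,m})$. The delicate cancellation between the mechanics residual $p_m^E\,\delta_t\phi_\D$ (Stage 2) and the matrix porosity drift in Stage 1 also needs to be tracked carefully, and is the reason why the equivalent pressure must be defined precisely as in \eqref{GD_closures}.
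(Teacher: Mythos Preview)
Your proposal is correct and follows essentially the same approach as the paper's proof: the paper establishes the sign condition on $\int_\Gamma\boldsymbol{\lambda}\cdot\jump{\delta_t\bu}$ via Lemma~\ref{lem:local.coulomb} (your Stage~3), tests \eqref{GD_meca} with $\bv=\delta_t\bu$ and \eqref{GD_hydro} with $\varphi^\alpha=p^\alpha$, combines them via the discrete convexity identity you state in Stage~1 (delegated in the paper to \cite[Lemma~4.3]{GDM-poromeca-disc}), and closes with coercivity and the inf-sup condition for $\boldsymbol{\lambda}$. The only minor point to make explicit is that, before invoking \eqref{infsup}, you need $L^2(0,T;L^2)$ bounds on $\Pi_{\D_p}^m p_m^E$ and $\Pi_{\D_p}^f p_f^E$ unweighted by $M$; these follow from the coercivity \eqref{eq:GD.coercif} applied to the already-controlled gradient/jump norms of $p^\alpha$, together with the elementary bound $0\le U_\omega(p)\le 2|p|$ and the definition of $p_\omega^E$.
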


\begin{proof}

We first prove that, for all $\sigma\in\faces_{\D_\bu}$,
\begin{equation}\label{eq:energy:coulomb.1}
\jump{\delta_t\bu}_{\tau,\sigma} \cdot \boldsymbol{\lambda}_{\tau,\sigma} \geq 0 \quad \mbox{ and }\quad \jump{\delta_t\bu}_{n,\sigma}   \lambda_{n,\sigma} \geq 0\qquad\mbox{ on $(0,T]$}.
\end{equation}
The first relation follows from \eqref{fracture_frictionconditions.3} in which $F \lambda_{n,\sigma} |\jump{\delta_t\bu}_{\tau,\sigma}|\ge 0$ due
to the sign condition on $\lambda_{n,\sigma}$ in \eqref{fracture_frictionconditions.1}. To prove the second relation, we note that, for a time $t\in (t_k,t_{k+1}]$ (for some $k=0,\ldots,N-1$) and owing to our interpretation of $\bu$ and $\lambda_n$ as piecewise constant functions in time,
$$
\jump{\delta_t\bu}_{n,\sigma}(t)  \lambda_{n,\sigma}(t)=\frac{1}{\dtk}(\jump{\bu^{k+1}}_{n,\sigma}-\jump{\bu^k}_\sigma)  (\lambda^{k+1})_{n,\sigma}.
$$
The last relation in \eqref{fracture_frictionconditions.1} at time $t$ imposes $\jump{\bu^{k+1}}_{n,\sigma}  (\lambda^{k+1})_{n,\sigma}=0$, while the first one gives $(\lambda^{k+1})_{n,\sigma}\ge 0$ and the second one at time $t_k$ yields $\jump{\bu^k}_{n,\sigma}\le 0$. This concludes the proof that $\jump{\delta_t\bu}_{n,\sigma}(t) \lambda_{n,\sigma}(t)\ge 0$.

\medskip

Setting $\bv=\delta_t\bu$ in \eqref{GD_meca}, the relations \eqref{eq:energy:coulomb.1} show that $\int_\Gamma \boldsymbol{\lambda}\cdot\jump{\delta_t\bu}~\d\sigma \ge 0$, which yields
\begin{equation}\label{eq:energy:coulomb.2}
   \int_0^T \int_\Omega \( \bbsig(\bu) : \bbeps(\delta_t\bu)  
    - b ~\Pi_{\D_p}^m p_m^E~  \div(\delta_t\bu)\)  \d\x \d t
    +  \int_0^T \int_\Gamma \Pi_{\D_p}^f p_f^E~  \jump{\delta_t\bu}_{n,\faces} \d\sigma\d t 
    \le \int_0^T \int_\Omega \mathbf{f} \cdot  \delta_t\bu ~\d\x \d t.
\end{equation}
We then follow the arguments in~\cite[Lemma 4.3]{GDM-poromeca-disc}. Taking $\varphi^\alpha=p^\alpha$ in \eqref{GD_hydro}, summing over the phases and adding \eqref{eq:energy:coulomb.2}, and accounting for the fact that \eqref{GD_closures} and \eqref{fracture_frictionconditions.1} ensure that $d_{f,\D_\bu}\ge d_0$, we obtain~\cite[Eq.~(25)]{GDM-poromeca-disc} (using~\cite[Eq.~(21)]{GDM-poromeca-disc} to keep track of the damaged rock coefficients $\bar d_\aa$, $\bar\phi_\aa$ which could be zero here), namely
\begin{align}
&\int_0^T \int_\Omega \delta_t(\phi_\D U_m(\Pi^m_{\D_p} p_{c,m})) \, \d\mathbf x \d t
  + \int_0^T \int_\Gamma \delta_t (d_{f,\D_\bu} U_f(\Pi^f_{\D_p} p_{c,f})) \, \d\sigma \d t \nonumber\\
  & + \sum_{\aa \in \{+,-\}} \int_0^T \int_\Gamma \bar d_\aa\bar\phi_\aa\delta_t U_\aa(\mathbb{T}^\aa_{\D_p} p_{c,m}) \, \d\sigma \d t
 + \int_0^T \int_\Omega \delta_t\left(\frac{1}{2}\bbsigma(\bu):\bbeps(\bu) 
+ \frac{1}{2M}(\Pi^m_{\D_p}p^E_m)^2\right)\, \d\x \d t  \nonumber\\
&+\sum_{\alpha} \int_0^T\int_\Omega |\grad^m_{\D_p}p^\alpha_m|^2 \, \d\mathbf x \d t + 
\sum_{\alpha} \int_0^T\int_\Gamma | d_{f,\D_\bu}^{\nf 3 2} \grad^f_{\D_p}p^\alpha_f |^2 \, \d\sigma \d t 
 +\sum_\alpha \sum_{\aa \in \{+,-\}}\int_0^T \int_\Gamma  |\jump{p^\a}^\aa_{\D_p}|^2 \d\sigma \d t\nonumber\\
& \le C\left( \int_0^T \int_\Omega \mathbf f \cdot \delta_t \bu \, \d\mathbf x \d t +
\sum_{\alpha} \int_0^T \int_\Omega h_m^\alpha \Pi^m_{\D_p}p^\alpha_m\,\d\mathbf x \d t + \sum_{\alpha} \int_0^T \int_\Gamma h_f^\alpha \Pi^f_{\D_p}p^\alpha_f\,\d\sigma \d t \right)
\label{GD_energy_estimate}
\end{align}
where $C$ depends only on the data in Assumptions (H), $(b,M,\bar d_\aa,\bar \phi_\aa)$ excepted. From here, the same arguments as in~\cite{GDM-poromeca-disc} provide the estimates on $(p,\bu)$ stated in the theorem. Using these estimates, the definition \eqref{GD_closures} of $(p^E_\omega)_{\omega=m,f}$, the coercivity bound \eqref{eq:GD.coercif} and the fact that $0\le U_\omega(p)\le 2|p|$, we also obtain a bound on $\|\Pi_{\D_p}^f p^E_f\|_{L^2(0,T;L^2(\Gamma))}$ and $\|\Pi_{\D_p}^m p^E_m\|_{L^2(0,T;L^2(\Omega))}$.

To estimate $\boldsymbol{\lambda}$, we use the inf-sup condition \eqref{infsup} to find, for each $k=1,\ldots,N$, $\bv^k\in X_{\D_\bu^m}^0$ such that
$\|\bv^k\|_{\U_0}=\|\boldsymbol{\lambda}^k\|_{H^{-1/2}(\Gamma)}$ and
$$
\int_\Gamma \boldsymbol{\lambda}^k\cdot\jump{\bv^k}\ge c_\star \|\boldsymbol{\lambda}^k\|_{H^{-1/2}(\Gamma)}^2.
$$
Using $\bv=(\bv^k)_{k=1,\ldots,N}$ as a test function in \eqref{GD_meca} (note that $\bv^0$ actually does not play any role in this relation) and invoking the estimates established above on $\bu$, $\Pi_{\D_p}^mp_m^E$ and $\Pi_{\D_p}^fp^E_f$, together with the trace inequality in $\U_0$ and an $L^2$-projection bound to write $\|\jump{\bv^k}_{n,\faces}\|_{L^2(\Gamma)}\le C\|\bv^k\|_{\U_0}$, we infer the estimate on the Lagrange multiplier.
\end{proof}
\begin{remark}[Energy estimate for the modified two-phase flow model]\label{rem:EEmodified2phase}
  The gradient scheme \eqref{eq:GS} can readily be adapted to the modified two-phase flow model of Remark \ref{rem:modified2phase}: the term $\delta_t (\phi_\D \Pi_{\D_p}^m s^\alpha_m )$ in \eqref{GD_hydro} should simply be replaced by
  $$
   (\Pi_{\D_p}^m \phi^0_m)~ \delta_t \Pi_{\D_p}^m s^\alpha_m + (\Pi_{\D_p}^m s^\alpha_m)~ \delta_t \phi_\D
  $$
  and, of course, the new definition \eqref{PE_WA} of the equivalent pressure should be used.
  An inspection of the arguments in~\cite[Lemma 4.3]{GDM-poromeca-disc} shows that the estimate \eqref{GD_energy_estimate} still holds replacing $\phi_\D U_m(\Pi^m_{\D_p} p_{c,m})$ with $(\Pi_{\D_p}^m \phi^0_m)~ U_m(\Pi^m_{\D_p} p_{c,m})$,
  which yields the energy estimates \eqref{apriori.est} under the assumption
  \begin{equation}\label{eq:assum.initial.poro}
  \Pi_{\D_p}^m \phi^0_m \ge\phi_{\rm min}\ge 0
  \end{equation}
  on the initial porosity (a datum of the model) rather than on the current porosity $\phi_\D$ (an unknown of the model). 
\end{remark}

\subsubsection{Existence result for the gradient scheme}

As shown in Theorem \ref{th:energy_estimates}, obtaining estimates on the solution to the gradient scheme (which is the first step to showing the existence of said solution) requires a non-negativity assumption on $\phi_{\D}$. The model itself does not ensure such a property due to the small porosity variations assumption on which this linear poroelastic model is based~\cite{coussy}. Going back to large deformations is clearly outside the scope of this work. Alternatively, the non-negativity of the porosity could be imposed as an additional inequality constraint, similarly to the condition $\jump{\bar\bu}_n\le 0$. However, as seen in \eqref{closure_laws} the porosity $\bar \phi_m$ depends on both the displacement field $\bar\bu$ and on the matrix equivalent pressure $\bar p^E_m$, which itself has a nonlinear dependency on the phase pressures $\bar p_m^\alpha$. These dependencies challenge the translation of the non-negativity constraint on $\bar \phi_m$ using a Lagrange multiplier, all the while ensuring that the resulting weak formulation yields (through a suitable inf-sup condition) estimates on this multiplier.
On the other hand, as stated in Remark \ref{rem:EEmodified2phase}, the modified two-phase flow model of Remark \ref{rem:modified2phase} circumvents this issue by a partial linearization of the matrix accumulation term using the initial porosity. It results that the existence of a discrete solution can be proved for this modified model.%
\begin{theorem}[Existence of a discrete solution]\label{th:existence}
Under Assumptions (H), \eqref{eq:GD.coercif}, \eqref{infsup} and \eqref{eq:assum.initial.poro}, there exists at least one solution of the gradient scheme described in Remark \ref{rem:EEmodified2phase} for the modified two-phase flow model of Remark \ref{rem:modified2phase}.
\end{theorem}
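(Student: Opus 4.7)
My plan is to argue by induction on the time step and, at each step, reduce existence to a finite-dimensional topological degree/fixed-point argument. Since all unknowns $(p^{k+1},\bu^{k+1},\boldsymbol\lambda^{k+1})$ lie in finite-dimensional spaces, the problem at time $t_{k+1}$ (given the solution at $t_k$) amounts to solving a finite-dimensional coupled system of nonlinear equations and inequalities. The initial data $(\bu^0,\boldsymbol\lambda^0)$ are obtained by applying the same strategy to the stationary counterpart of \eqref{GD_meca}--\eqref{GD_meca_var}, in which the equivalent pressures are given data built from the initial phase pressures.

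To disentangle the Coulomb friction coupling I would use an outer fixed point on the friction threshold in the spirit of Panagiotopoulos: for a piecewise constant $g\ge 0$ on $\faces_{\D_\bu}$, replace the cone $\bm{C}_{\D_\bu^f}(\lambda_n)$ by the Tresca cone $\widetilde{\bm C}(g)=\{\boldsymbol\mu\,:\,\mu_{n,\sigma}\ge 0,\ |\boldsymbol\mu_{\tau,\sigma}|\le g_\sigma\}$. The resulting inner problem is a monotone coupled flow--mechanics system together with a convex variational inequality for the displacement, for which existence can be obtained by a Leray--Schauder homotopy from a trivial reference problem (e.g.\ $b=0$, uncoupled linear elasticity against a frozen pressure) to the full nonlinear one. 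The a priori bounds required to carry out the degree computation are exactly those of Remark \ref{rem:EEmodified2phase}: the crucial feature of the modified model is that the matrix accumulation term is linearised against $\Pi_{\D_p}^m\phi^0_m\ge\phi_{\rm min}$, so assumption \eqref{eq:assum.initial.poro} provides the lower bound needed in the energy identity \eqref{GD_energy_estimate} without having to control the sign of the unknown current porosity $\phi_\D$. Combined with the coercivity \eqref{eq:GD.coercif} and the inf-sup condition \eqref{infsup}, this yields uniform bounds on $(p^\alpha,\bu,\boldsymbol\lambda)$ along the whole homotopy. Letting $\lambda_n^{(g)}$ denote the normal Lagrange multiplier produced by the Tresca solution, the map $\Psi:g\mapsto F\lambda_n^{(g)}$ takes a large enough ball of non-negative piecewise constant functions into itself, and a fixed point of $\Psi$ is precisely a solution of the full Coulomb--friction scheme.

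The main obstacle is the continuity of the Tresca solution map $g\mapsto(\bu_g,\boldsymbol\lambda_g,p_g)$, since uniqueness cannot be expected under the present assumptions. I would handle this in either of two equivalent ways: (i) upgrade Brouwer to Kakutani's theorem on the set-valued map $g\rightrightarrows F\lambda_n^{(g)}$, establishing its closed-graph property by passing to the limit in the discrete variational inequality \eqref{GD_meca_var} and in the nonlinear closures \eqref{GD_closures} thanks to the energy bounds and the piecewise-constant character of the relevant reconstruction operators; or (ii) regularise the Coulomb law with a small smoothing parameter, apply Brouwer in the regularised setting, and pass to the limit using the same estimates together with Lemma \ref{lem:local.coulomb} to recover the complementarity relations \eqref{fracture_frictionconditions.1}--\eqref{fracture_frictionconditions.3}. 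A secondary technicality, common to both routes, is to check that along the homotopy the closures \eqref{GD_closures} remain well-defined — in particular that $d_{f,\D_\bu}=d_0-\jump{\bu}_{n,\faces}$ stays bounded, which follows from the energy estimate on $\|d_{f,\D_\bu}\|_{L^\infty(0,T;L^4(\Gamma))}$ together with the non-negativity of $d_{f,\D_\bu}$ enforced by \eqref{fracture_frictionconditions.1}.
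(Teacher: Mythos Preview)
Your route is genuinely different from the paper's, and the Kakutani branch carries a gap you should be aware of. The Panagiotopoulos outer fixed point works cleanly for pure Coulomb contact because the Tresca inner problem is a strictly convex minimisation and hence has a unique solution, making $g\mapsto F\lambda_n^{(g)}$ single-valued and continuous. Here the Tresca mechanics is coupled to the two-phase flow through $\phi_\D$, $d_{f,\D_\bu}$ and the cubic conductivity $\tfrac{1}{12}d_{f,\D_\bu}^3$; this coupled inner problem is \emph{not} monotone---your use of that word is misleading, since the saturation/mobility nonlinearities and the cubic aperture law destroy any monotone structure---and uniqueness of the inner solution is unavailable (cf.\ Remark~\ref{rem.uniqueness}). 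Kakutani then requires the image $\{F\lambda_n:(p,\bu,\boldsymbol\lambda)\text{ solves the Tresca-coupled system with threshold }g\}$ to be \emph{convex} for each $g$, and you have given no argument for this; closed graph alone is not enough. Your option (ii), regularising the friction law and passing to the limit, is more promising (in finite dimensions a bounded sequence of regularised solutions has a strongly convergent subsequence, so the complementarity relations of Lemma~\ref{lem:local.coulomb} can indeed be recovered), but note that once the friction law is an equation the outer Tresca loop becomes superfluous: you can run the degree argument directly on the full regularised system. That is essentially what the paper does, without regularisation.

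The paper avoids both the set-valued map and any limiting procedure by recasting \eqref{GD_meca_var} as the equation $\boldsymbol\lambda=P_{\bm{C}_{\D_\bu^f}(\lambda_n)}(W(\bu,\boldsymbol\lambda))$ for a suitable $W$ (the projection is onto the $\lambda_n$-dependent cone, but is still a continuous function of $(\bu,\boldsymbol\lambda)$), so that the entire scheme reads $\mathcal H(p^\g,p^\l,\bu,\boldsymbol\lambda)=0$ with $\mathcal H$ continuous on a finite-dimensional space. A chain of explicit homotopies then reduces $\mathcal H$ to an invertible linear map: first $b\leadsto\theta b$, $d_{f,\D_\bu}\leadsto d_0-\theta\jump{\bu}_{n,\faces}$ and $p^E_f\leadsto\theta p^E_f$ in \eqref{GD_meca} to decouple flow from mechanics; then $S^\alpha_{\rt}\leadsto\rho S^\alpha_{\rt}$ (with a matching mobility homotopy) on the flow side to reach a linear square system; and $F\leadsto\rho F$, $\mathbf f\leadsto\rho\mathbf f$ followed by a cone homotopy $\bm K(\varpi)$ on the mechanics side. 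The estimates of Remark~\ref{rem:EEmodified2phase} together with~\eqref{infsup} are checked to hold uniformly along every homotopy, so the degree is preserved throughout---no outer fixed point, no set-valued map, no compactness-in-$\varepsilon$ step.
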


\begin{proof}
Let us denote by (GSm) the gradient scheme for the modified two-phase flow model.
The proof uses a topological degree argument~\cite{deimling}. The equations (GSm) written in variational formulation, through the usage of test functions, can be equivalently rewritten as $\mathcal{H}(p^\g,p^\l,\bu,\bm\lambda)=0$ where $\mathcal{H}:\mathcal X\to\mathcal X$ is a continuous function on the space $\mathcal X=(X^0_{\D_p})^{N+1} \times (X^0_{\D_p})^{N+1}\times (X^0_{\D_\bu^m})^{N+1}\times (X_{\D_\bu^f})^{N+1}$ (note that we also include the condition $\bm\lambda\in \bm{C}_{\D_\bu^f}(\lambda_n)$ in the definition of $\mathcal{H}$). This recasting of the scheme's equations is obvious for \eqref{GD_hydro}--\eqref{GD_meca}, but slightly less for the variational inequality \eqref{GD_meca_var}; the arguments given in Step 3 of this proof (see also Remark \ref{rem:meca.zero}) however show how this inequality can be recast into a nonlinear equation consisting in finding the zero of a function (on which the homotopy arguments to follow can be properly applied). 

To establish the existence of a solution to $\mathcal{H}(p^\g,p^\l,\bu,\bm\lambda)=0$, we will transform through continuous homotopies $\mathcal{H}_{\theta}$ the function $\mathcal{H}$ into a function that has a nonzero degree at $0$, ensuring throughout the transformations that any solution to $\mathcal{H}_{\theta}(p^\g,p^\l,\bu,\bm\lambda)=0$ remains uniformly bounded. The topological degree theory then ensures the existence of a solution to (GSm).

\medskip

\emph{Step 1: decoupling the flow and mechanical equations}.

We consider (GSm) with the following substitutions, for $\theta\in [0,1]$:
\begin{equation}\label{eq:step1.theta}
d_{f,\D_\bu}\leadsto{} d_0-\theta \jump{\bu}_{n,\faces},\quad
b\leadsto{} \theta b,\quad
\Pi_{\D_p}^f p^E_f\mbox{ in \eqref{GD_meca}}\leadsto{} \theta \Pi_{\D_p}^f p_f^E.
\end{equation}
We note that \eqref{GD_meca_var} is unchanged, so Lemma \ref{lem:local.coulomb} remains valid for all $\theta\in [0,1]$.
As a consequence, and since they do not depend on $b$, one can easily check that the estimates in Theorem \ref{th:energy_estimates} (adapted to the modified model, see Remark \ref{rem:EEmodified2phase}) are uniformly valid with respect to $\theta$. The only slightly non-trivial element to analyze is how the term involving $\Pi_{\D_p}^f p^E_f$ in \eqref{GD_meca} compensates with a similar term coming in the estimates from the flow equations. Following the arguments and notations in~\cite[Lemma 4.3]{GDM-poromeca-disc}, we see that the substitution $\delta_t d_{f,\D_\bu}\leadsto \delta_t d_0-\theta\delta_t \jump{\bu}_{n,\faces}=-\theta\delta_t \jump{\bu}_{n,\faces}$ comes down to the substitution 
$$
-\int_0^T\int_\Gamma \Pi_{\D_p}^f p^E_f \jump{\delta_t\bu}_{n,\faces}\leadsto -\theta \int_0^T\int_\Gamma \Pi_{\D_p}^f p^E_f\jump{\delta_t\bu}_{n,\faces}
$$
in the last term of~\cite[Eq.~(4.5)]{GDM-poromeca-disc}. When adding the mechanical equations with $\bv=\delta_t\bu$, this term precisely compensates with the one involving $\theta \Pi_{\D_p}^f p_f^E$ in the substituted version of \eqref{GD_meca}, thus ensuring that the introduction of $\theta$ does not impact the estimates. The solutions to (GSm) with \eqref{eq:step1.theta} thus remain uniformly bounded for all $\theta\in [0,1]$.

For $\theta=1$, we recover the original (GSm). For $\theta=0$, the flow \eqref{GD_hydro} and mechanical equations \eqref{GD_meca}--\eqref{GD_meca_var} 
are fully decoupled since, in the closure equations, $d_{f,\D_\bu}=d_0$ and $\phi_\D-\Pi_{\D_p}^m\phi_m^0=\frac{1}{M}\Pi_{\D_p}^m(p^E_m-p_m^{E,0})$ no longer depend on $(\bu,\bm\lambda)$, and $b\Pi_{\D_p}^m p^E_m$ and $\Pi_{\D_p}^f p^E_f$ have disappeared from \eqref{GD_meca}.
Hence, the topological degree of the underlying function will be nonzero (on a ball determined by the uniform \emph{a priori} estimates mentioned above) if the topological degrees of each function corresponding to the decoupled equations is nonzero.

\medskip

\emph{Step 2: topological degree of the transformed flow equations}.

We consider here \eqref{GD_hydro} (including the adaptations of Remark \ref{rem:EEmodified2phase}) with \eqref{eq:step1.theta} and $\theta=0$. We perform the homotopy $S^\g_\rt \leadsto \rho S^\g_\rt$,  $S^\l_\rt \leadsto \rho S^\l_\rt + 1-\rho$ for $\rt \in \{m,f,\aa\}$, $\eta^\a_\aa \leadsto \rho \eta^\a_\aa + (1-\rho) \eta^\a_f$ with $\rho \in [0,1]$. These new saturation and mobility functions satisfy respectively Assump\-tions \ref{second.hyp} and \ref{first.hyp} and the estimates \eqref{apriori.est} therefore remain valid for any  $\rho$; in particular, the bounds therein on $\nabla_{\D_p}^mp_m^\alpha$, $d_{f,\D_\bu}^{\nicefrac32}\nabla_{\D_p}^f p_f^\alpha$ and $\jump{p^\alpha}_{\D_p}^\aa$ are uniform with respect to $\rho$, and yield a uniform bound on the phase pressure unknowns since \eqref{def:XDp.norm} is a norm on $X_{\D_p}^0$.

 For $\rho = 0$, we obtain $s^\g_\rt = 0$, $s^\l_\rt = 1$,  $U_m = U_f = 0$ leading to
$p^E_m = p^\l_m$ and $p^E_f = p^\l_f$. 
Using the (linear) relation between $\phi_\D$ and $p^E_m$, that $d_{f,\D_\bu}=d_0$ is fixed and that (since $\eta^\l_\aa(1)=\eta^\l_f(1)$ and $\eta^\g_\aa(0)=\eta^\g_f(0)$),
$$
Q^\l_{f,\aa}=\Lambda_f\[ \eta^\l_\aa(1) (\jump{p^\l}_{\D_p})^+ - \eta^\l_f(1)(\jump{p^\l}_{\D_p})^-\] = \Lambda_f \eta^\l_f(1) \jump{p^\l}_{\D_p},
$$
and
$$
Q^\g_{f,\aa}=\Lambda_f\[ \eta^\g_\aa(0) (\jump{p^\g}_{\D_p})^+ - \eta^\g_f(0)(\jump{p^\g}_{\D_p})^-\] = \Lambda_f \eta^\g_f(0) \jump{p^\g}_{\D_p},
$$
these equations form a linear square system in $(p^\l_m,p^\l_f,p^\g_m,p^\g_f)$. Since we established that
any solution to this system satisfies an \emph{a priori} estimates, this proves that the underlying function defining this system has a nonzero degree on a ball of radius larger than the bounds provided by these estimates.

\medskip

\emph{Step 3: topological degree of the transformed mechanical equations}.

The mechanical equations are a bit more challenging due to the presence of the variational inequality. We first perform the homotopy $\bm{f}\leadsto \rho\bm{f}$ in the source term and $F\leadsto \rho F$ in the cone $\bm{C}_{\D^f_\bu}(\lambda_n)$. For $\rho$ going from $1$ to $0$, this transforms this cone into 
$$
\bm{K}=\{\bm\mu\in X_{\D_\bu^f}\,:\,\mu_{n}\ge 0\,,\; \bm{\mu}_\tau=0\},
$$
which no longer depends on the solution $\bm\lambda$. Lemma \ref{lem:local.coulomb} remains valid with this transformed $F$, which shows in particular that \eqref{eq:energy:coulomb.2} (in which we remember that, in our current context, $b=0$ and $\Pi_{\D_p}^f p^E_f$ has been removed) still holds with $\rho\bm{f}$ instead of $\bm{f}$. This estimate leads to an upper bound on $\| \bbeps(\bu)\|_{L^\infty(0,T;L^2(\Omega,\S_d(\R)))}$ that does not depend on $\rho$; using then the inf-sup condition \eqref{infsup} as in the proof of Theorem \ref{th:energy_estimates} gives a bound on $\bm{\lambda}$ that does not depend of $\rho$.

For $\rho=0$, we obtain the following equations: find $\bu \in (X^0_{\D_\bu^m})^{N+1}$ and $(\boldsymbol{\lambda}^k)_{k=0,\ldots,N}\in \bm{K}^{N+1}$ such that
\begin{subequations}\label{GD_dec}
\begin{align}
\label{GD_meca_dec}
   \int_0^T \int_\Omega \bbsig(\bu) : \bbeps(\bv)  \d\x \d t
   +  \int_0^T \int_\Gamma  \boldsymbol{\lambda} \cdot  \jump{\bv} \d\sigma\d t
    ={}& 0\quad\forall \bv \in (X^0_{\D_\bu^m})^{N+1},\\
  \label{GD_meca_var_dec}
   \int_0^T \int_\Gamma (\mu_n - \lambda_n) \jump{\bu}_{n} \d\sigma\d t \leq{}& 0\quad\forall(\boldsymbol{\mu}^k)_{k=1,\ldots,N} \in \bm{K}^{N+1}
\end{align}
\end{subequations}
(and similar equations, omitted here, for the initial displacement field $\boldsymbol{u}^0$ and Lagrange multiplier $\boldsymbol{\lambda}^0$).

To describe the final homotopy and properly ensure that it is a continuous one, we now recast these equations in the form of the zero of a function on a vector space. Let us first notice that $\bm{\lambda}\in \bm{K}^{N+1}$ satisfies \eqref{GD_meca_var_dec} if and only if, for all $\bm{\mu}\in\bm{K}^{N+1}$,
\begin{equation}\label{meca:proj}
   \int_0^T \int_\Gamma \left[\bm{\mu} - \bm{\lambda}\right] \cdot \left[(\jump{\bu}+\bm{\lambda})-\bm{\lambda}\right] \d\sigma\d t 
   \leq 0.
\end{equation}
This relation is a characterization of the fact that $\bm{\lambda}\in (X_{\D_\bu^f})^{N+1}$ satisfies $\bm{\lambda}=P_{\bm{K}}(\jump{\bu}+\bm{\lambda})$, where $P_{\bm{K}}$ is the projection on the closed convex cone $\bm{K}^{N+1}$ (interpreted as usual as a space of piecewise-constant functions in time with values in $\bm{K}$) for the $L^2((0,T)\times\Gamma)^d$-inner product. Recasting \eqref{GD_meca_dec} as a linear relation $\mathcal G(\bu,\bm{\lambda})=0$ and defining the vector space $\mathcal X_{\rm mech}= (X_{\D_\bu^m}^0)^{N+1}\times (X_{\D_\bu^f})^{N+1}$, we therefore have the equivalence
\begin{equation}\label{GD_meca_equiv}
\begin{array}{l}
(\bu,\bm{\lambda})\in (X_{\D_\bu^m}^0)^{N+1}\times\bm{K}^{N+1}\\
\mbox{solves \eqref{GD_dec} for the cone $\bm{K}$ in \eqref{GD_meca_var_dec}}
\end{array}
\Longleftrightarrow
\begin{array}{l}(\bu,\bm{\lambda})\in \mathcal X_{\rm mech}\mbox{ solves }\\
\mathcal Z(\bu,\bm{\lambda})=\left(\mathcal G(\bu,\bm{\lambda}),\bm{\lambda}-P_{\bm{K}}(\jump{\bu}+\bm{\lambda})\right)=0.
\end{array}
\end{equation}
The proof now consists in showing that the topological degree, on a large enough ball, of $\mathcal Z$ is nonzero. To do so, we will perform a continuous homotopy $(\mathcal Z_\varpi)_{\varpi\in [0,1]}$ from $\mathcal Z=\mathcal Z_1$ to a function $\mathcal Z_0$ which is linear, in a such a way that the only solution to $\mathcal Z_{\varpi}(\bu,\bm{\lambda})=0$ on $\mathcal X_{\rm mech}$ is the zero element of that space. This will show that $\mathcal Z_0$ is invertible and thus has a nonzero degree.

Let $\bm{K}(\varpi)=\{\bm{\mu}\in X_{\D_\bu^f}\,:\,\varpi\mu_n\ge -(1-\varpi)\,,\;\bm{\mu}_\tau=0\}$. Then $\bm{K}(1)=\bm{K}$ and $\bm{K}(0)=\{\bm{\mu}\in X_{\D_\bu^f}\,:\,\bm{\mu}_\tau=0\}$. Replacing $\bm{K}$ with $\bm{K}(\varpi)$ in the rightmost statement of \eqref{GD_meca_equiv} defines the mapping $\mathcal Z_\varpi$. For $\bm{g}=(g_n,\bm{g}_\tau)\in L^2((0,T)\times\Gamma)^d$ it can easily be checked that, if $\varpi>0$, $P_{\bm{K}(\varpi)}(\bm{g})=(\max(g_n,-\frac{1-\varpi}{\varpi}),0)$, while $P_{\bm{K}(0)}(\bm{g})=(g_n,0)$; this shows, using dominated convergence theorem, that for a fixed $\bm{g}$ the mapping $\varpi\in [0,1]\mapsto P_{\bm{K}(\varpi)}(\bm{g})\in L^2((0,T)\times\Gamma)^d$ is continuous and, using the fact that $P_{\bm{K}(\varpi)}$ is $1$-Lipschitz continuous, that $(\varpi,\bm{g})\in[0,1]\times L^2((0,T)\times\Gamma)^d\mapsto P_{\bm{K}(\varpi)}(\bm{g})$ is continuous. Hence $(\mathcal Z_\varpi)_{\varpi\in[0,1]}$ is a continuous homotopy. Moreover, the expression of $P_{\bm{K}(0)}$ above shows that it is linear, and thus that $\mathcal Z_0$ is also linear.

If $(\bu,\bm{\lambda})\in \mathcal X_{\rm mech}$ is a zero of $\mathcal Z_\varpi$ for some $\varpi\in[0,1]$ then, using the equivalence \eqref{GD_meca_equiv} for this $\varpi$ and setting
$(\bv,\bm{\mu})=(\bu,\bm{0})$ (which is a valid element of $(X_{\D_\bu^m}^0)^{N+1}\times \bm{K}(\varpi)^{N+1}$) in \eqref{GD_dec}, adding together the two relations and recalling that $\bm{\lambda}_\tau=\bm0$ (since $\bm{\lambda}\in \bm{K}(\varpi)^{N+1}$) we obtain $\int_0^T \int_\Omega \bbsig(\bu) : \bbeps(\bu)  \d\x \d t\le 0$ and thus $\bu=\bm0$ by definition of $\bbsig$ and the Korn inequality. Coming back to \eqref{GD_meca_dec} with a generic $\bv$ and using the inf-sup condition \eqref{infsup}, we infer $\bm{\lambda}=\bm0$, which concludes the proof.
\end{proof}

\begin{remark}[Expressing solutions to the decoupled mechanical equations as zeros of a function]\label{rem:meca.zero}
The arguments used in the proof above can also be applied to the decoupled mechanical equations, before the homotopy driven by $\rho$, to express these equations as the zero of a function. Specifically, the recasting of \eqref{GD_meca_var} leads, instead of \eqref{meca:proj}, to: for all $\bm{\mu}\in \bm{C}_{\D_\bu^f}(\lambda_n)^{N+1}$,
$$
   \int_0^T \int_\Gamma \left[\bm{\mu} - \bm{\lambda}\right] \cdot \Bigg[\underbrace{\left(\begin{array}{c}\jump{\bu}_{n}+\lambda_n\\
   \delta_t\jump{\bu}_\tau+\bm{\lambda}_\tau\end{array}\right)}_{=W(\bu,\bm{\lambda})}-\bm{\lambda}\Bigg] \d\sigma\d t 
   \leq 0,
$$
which is equivalent to the equation $\bm{\lambda}-P_{\bm{C}_{\D_\bu^f}(\lambda_n)}(W(\bu,\bm{\lambda}))=0$.
\end{remark}

\begin{remark}[Convergence of the scheme]\label{rem:conv.scheme}
Having established the existence of a solution to the scheme and estimates on this solution, the natural question would be to analyze its convergence as the mesh size and time steps tend to zero. This convergence requires to establish complex compactness results on sequences of approximate solutions; see, e.g., the convergence analysis for the model without contact in~\cite{GDM-poromeca-disc}, which assumes that the fracture width and porosity remain suitably bounded from below. For the models with Coulomb friction considered here (either with or without the small porosity assumption of Remark \ref{rem:modified2phase}), a very challenging element is the variational inequality \eqref{GD_meca_var}: either the Lagrange multiplier or the jump of displacement would need to converge strongly in appropriate spaces. Obtaining such a compactness results probably requires to consider mechanical equations with the usually neglected inertial term $\partial_t^2\bar\bu$ and, even so, getting the compactness of traces of the displacement is not a small affair. We are actually not aware of any work that analyzes the existence of a solution and/or convergence of numerical schemes for the model including Coulomb friction.
\end{remark}
{%
\begin{remark}[Uniqueness of the solution]\label{rem.uniqueness}
Determining the uniqueness of the solution even for the frictional contact problem alone is, in general, still an open question. It can be achieved under some very special circumstances (see e.g. the discussion in the monograph~\cite[Section 1.3.2]{book-contact-2005}, as well as the very recent contribution~\cite{iurlano}), but not under a general framework. Moreover, to the best of our knowledge, uniqueness of the solution has not so far been addressed for the two-phase flow problem alone, even in a discrete setting. Therefore, well-posedness of the coupled problem remains \emph{a fortiori} an open question as well.
\end{remark}
}
\section{Numerical experiments}
\label{num.experiments}
In this section, we present three numerical experiments to evaluate the computational performance of our discretization, as well as its convergence properties. The first two examples validate the discretization of pure contact mechanics, without Darcy flow. In the third example we consider the coupling with a two-phase flow, and we present the simulation of a drying model in a radioactive waste geological storage structure.

For the complete coupled problem, the flow part~\eqref{eq_edp_hydro} is discretized in space by a TPFA cell-centered finite volume scheme with additional face unknowns at matrix--fracture interfaces~\cite{gem.aghili}.
This implementation is based on an upwind approximation of the mobilities, and therefore does not fit into the gradient scheme form \eqref{GD_hydro} (which contains TPFA in case of centered approximations of mobilities). It is however not difficult -- albeit heavier in terms of notations -- to check that the energy estimates and the existence result stated in Theorems \ref{th:energy_estimates} and \ref{th:existence} extend to the scheme based on this upwind approximation of the mobilities. We also note that, although such upwinding is known to be more robust than the centered approximation (especially on coarse meshes), the centered approximation has been shown in~\cite{GDM-poromeca-cont} to be more accurate on finer meshes.

The mechanical part~\eqref{eq_edp_meca} is discretized by second-order finite elements ($\P_2$) for the matrix displacement field~\cite{daim.et.al,jeannin.et.al}, with supplementary unknowns on the fracture faces to account for the discontinuities, coupled with face-wise constant ($\bb P_0$) Lagrange multipliers on fractures, representing normal and tangential stresses, to discretize the frictional contact conditions (more details are given in Section~\ref{sec:NS-Newton} below). When studying the convergence rate in a pure contact mechanics framework, we compute the
$L^2$-norm of the error along the fracture network. To do so, we use the natural $\bb P_2$ reconstruction for the jump of the displacement field $\jump{\bu}$ and a node-based $\bb P_2$ reconstruction for the vector Lagrange multiplier $\bm\lambda$ using the definition of the tractions stemming from the weak form of the problem.

Triangular grids are employed to decompose $\Omega$ (cf.~Figure \ref{mesh_tpfa_p2}). 
\begin{figure}
\centering
\subfloat[Two-phase flow unknowns, $\alpha\in\{\l,\g\}$]{
\includegraphics[scale=1.2]{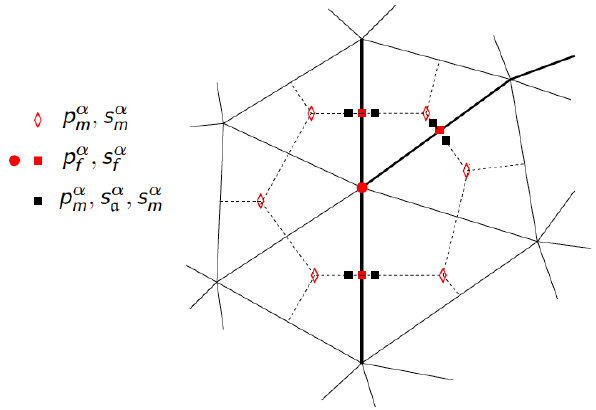}} \qquad\qquad
\subfloat[Mechanics unknowns]{
\includegraphics[scale=1.2]{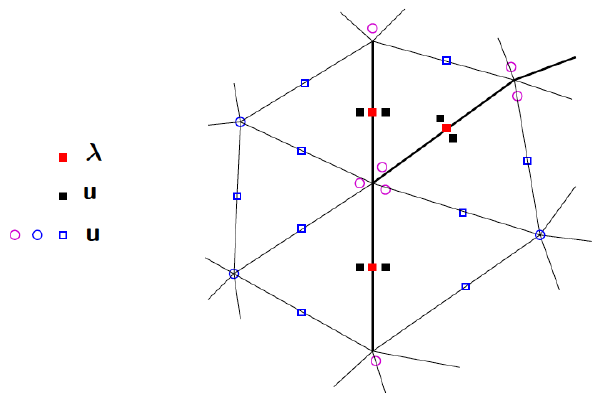}}
\caption{{Example of triangular mesh with three fracture edges in bold. The dot line joining the centers of two cells sharing an edge is orthogonal to the edge. The discrete unknowns of the discontinuous pressure model are presented for the two-phase flow (a) and the mechanics (b). The discontinuities of the pressures, of the saturations and of the displacement are captured at matrix--fracture interfaces. Additional nodal unknowns are defined at the intersections of three or more fractures}.}
\label{mesh_tpfa_p2}
\end{figure}
Let $k\in\N^\star$ denote the time step index. The time step is adaptive, and defined as
$$\dtk = \min \{ {\varrho}\dtkmun , \Delta t^{\max} \},$$
where $\dtzero = 0.001$ days is the initial time step, $\Delta t^{\max} = 10$ years, and ${\varrho} = 1.1$. At each time step, a Newton--Raphson algorithm is used to compute the flow unknowns. At each iteration, the Jacobian matrix is computed analytically and the linear system is solved using a GMRes iterative solver. As in~\cite{gem.aghili}, the matrix--fracture interface pressures are eliminated using a local nonlinear solver. In the case where the Newton--Raphson algorithm does not converge within 50 iterations, the time step is reduced by a factor 2. The stopping criterion is either that the relative residual norm be lower than $10^{-5}$, or that a maximum normalized variation of the primary unknowns be lower than
$10^{-4}$. 

Following~\cite{ecmor}, which considers the case of open fractures, the coupled nonlinear system is solved at each time step using a Newton--Krylov algorithm~\cite{nitsol}. Let us set $p^E = (p^E_m,p^E_f)$ and define the 
functions 
$$
{\bf g}_p: \,
p^E
\underset{\substack{\text{Contact Mechanics} \\[1ex]\rm Solve}}{\rightarrow} \, \bu\, 
\underset{\substack{\text{Darcy} \\[1ex]\rm Solve}}{\rightarrow} \, 
\widetilde  p^E 
$$
and 
$$
{\bf g}_\bu: \, \bu \,\underset{\substack{\rm Darcy \\[1ex]\rm Solve}}{\rightarrow} \, 
p^E
\underset{\substack{\text{Contact Mechanics} \\[1ex]\rm Solve}}{\rightarrow} \, \wt \bu. 
$$
The Newton--Krylov algorithm is either applied to the fixed point $\bu = {\bf g}_\bu(\bu)$
or to $p^E = {\bf g}_p(p^E)$. The second choice is expected to provide a better convergence than its displacement-based counterpart, because of the frictional contact conditions (cf.~the test case in Section~\ref{cas_andra}).
The stopping criterion is fixed at $10^{-6}$ on the relative increment or on the relative residual.
{In the case of open fractures, these Newton--Krylov algorithms are compared in~\cite{ecmor} to the fixed-stress algorithm~\cite{KTJ11} extended to discrete fracture-matrix models in~\cite{GKW16}. They are shown to address the robustness issue of fixed-stress algorithms with respect to small initial time steps in the case of incompressible fluids.}

\subsection{Complementarity functions and non-smooth Newton method}
\label{sec:NS-Newton}
To take account of the local Coulomb contact conditions~\eqref{fracture_frictionconditions}, we recast them in the form of zeros of given complementarity functions, and employ a non-smooth Newton method to compute such zeros. We follow the same arguments as in~\cite[Section~3.2]{contact-norvegiens} (see also~\cite{Wohlmuth08}). Let a face $\sigma\in \faces_{\D_\bu}$ and a time index $k\in\bb N^\star$ be given, and let $\delta\bu = \bu - \bu^{k-1}$ (we write $\bu$ in lieu of $\bu^k$ for simplicity). For a given $c>0$, we define the scalar and vector quantities 
\begin{equation}
\label{friction_bound}
b_{\sigma} = \lambda_{n,\sigma} + c\jump{\bu}_{n,\sigma},\quad \bm a_\sigma = \bm\lambda_{\tau,\sigma} +c\jump{\delta\bu}_{\tau,\sigma},
\end{equation}
where the first one is the so-called \emph{friction bound}. Upon introducing the nonlinear \emph{complementarity functions}
$$\begin{aligned}
\mathcal C_n(\jump{\bu}_{n,\sigma},\lambda_{n,\sigma}) & = \lambda_{n,\sigma} - \max\{0,b_\sigma\}, \\
\bm{\mathcal{C}}_{\bm \tau}({\jump{\bu}}_\sigma, \bm\lambda_{\sigma}) &  =
 \bm\lambda_{\tau,\sigma}\max\{Fb_\sigma,|\bm a_{\sigma}|\} - \bm a_\sigma \max\{0,Fb_\sigma\},
 \end{aligned}$$
it can be shown that~\eqref{fracture_frictionconditions.1} and~\eqref{fracture_frictionconditions.2}--\eqref{fracture_frictionconditions.3} can be rewritten, respectively, as
\begin{equation}
\mathcal C_n(\jump{\bu}_{n,\sigma},\lambda_{n,\sigma}) =0\quad\mbox{ and }\quad
\bm{\mathcal C}_{\bm \tau}({\jump{ \bu}}_\sigma, \bm\lambda_{\sigma})  = \mathbf 0 
\label{friction_newton_system}
\end{equation}
(note that~\eqref{fracture_frictionconditions.3} does not change upon multiplication by the time step $\delta t^k > 0$). 
The nonlinear system of equations resulting from the mechanics contribution is therefore
\begin{equation}
\label{mech_nonlinear_sys}
\rm G(\bu,\bm\lambda) = \bf 0,\quad\text{with}\quad
\rm G(\bu,\bm\lambda) =
\begin{pmatrix}
\underline{\mathbf A}\bu + \underline{\bm \ell}(\bm \lambda) - \underline{\mathbf b} \\
\left[\mathcal C_n(\jump{\bu}_{n,\sigma},\lambda_{n,\sigma})\right]_{\sigma\in\faces_{\D_\bu}}\\
\left[\bm{\mathcal C}_{\bm \tau}({\jump{ \bu}}_\sigma, \bm\lambda_{\sigma})\right]_{\sigma\in\faces_{\D_\bu}}
\end{pmatrix},
\end{equation}
where the first vector equation represents the finite-element version of~\eqref{GD_meca}, in the sense that we have the following correspondence between matrix-/vector-like objects and bilinear/linear forms:
$$
\begin{gathered}
\ul{\bf A} \sim \int_\Omega \bbsig(\bu) : \bbeps(\bv)\d\x, \qquad
\ul{\bm \ell}(\bm\lambda) \sim \int_\Gamma  \boldsymbol{\lambda} \cdot  \jump{\bv} \d\sigma, \\
\ul{\mathbf b} \sim  \int_\Omega \( \mathbf{f} \cdot  \bv +  b ~\Pi_{\D_p}^m p_m^E~  \div\,\bv \)  \d\x 
     - \int_\Gamma \Pi_{\D_p}^f p_f^E~  \jump{\bv}_n \d\sigma.
\end{gathered}
$$
The non-smooth Newton method used to solve~\eqref{mech_nonlinear_sys} is the following. Let $q\in\bb N$ be the iteration index. We split the fracture faces into the following three sets:
\begin{equation}
\label{iteration_sets}
\begin{aligned}
\cc I_n^{q+1} & = \{\sigma\in \faces_{\D_\bu} : b_\sigma^q \le 0\},\\
\cc I_\tau^{q+1} & =  \{\sigma\in \faces_{\D_\bu} : |\bm a_\sigma^q| < Fb_\sigma^q\},\\
\cc A^{q+1} & = \{\sigma\in \faces_{\D_\bu} : |\bm a_\sigma^q| \ge Fb_\sigma^q > 0\}.
\end{aligned}
\end{equation}
Here, $\cc I_n^{q+1}$ contains the faces not in contact, $\cc I_\tau^{q+1}$ the faces in contact but sticking, i.e.~whose friction bound is not reached, and $\cc A^{q+1}$ contains the faces in contact and slipping, i.e.~for which the friction bound is reached. Given the solution $(\bu^q,\bm\lambda^q)$ of~\eqref{mech_nonlinear_sys} at iteration $q$, the new iterates $(\bu^{q+1},\bm\lambda^{q+1})$ are obtained by computing the derivatives of $\mathrm G$ and then updating the solution. We refer to~\cite[Section~3.2.1]{contact-norvegiens} and to~\cite[Section~3]{Wohlmuth08}, where this approach is applied to the Tresca model, for a detailed discussion including a regularization technique to stabilize and improve the convergence of the method which is also applied in all following numerical experiments. 

In all the examples we present, we also compare the computational performance of this method with that of its \emph{active set} counterpart, where equations~\eqref{friction_newton_system} are replaced by simplified equations, which are linear in the open and stick cases (first two sets in~\eqref{iteration_sets}), and piecewise linear in the slip case (third set in~\eqref{iteration_sets}) in a two-dimensional framework.
The zeros of such simplified equations are the same as the zeros of the corresponding original non-smooth Newton equations. As a matter of fact, in the general case of three space dimensions, upon introducing the unit vector $$\mathbf w_{\tau,\sigma} = \frac{\bm\lambda_{\tau,\sigma}
+c\jump{\delta\bu}_{\tau,\sigma}}{| \bm\lambda_{\tau,\sigma}+c\jump{\delta\bu}_{\tau,\sigma} |},$$
equations~\eqref{friction_newton_system} can be simplified in the following way (we drop the iteration index $q$ here):
$$
\begin{gathered}
\cc C_n(\jump{\bu}_{n,\sigma},\lambda_{n,\sigma}) = 0\quad \Longleftrightarrow \quad
\begin{aligned}
\lambda_{n,\sigma} & = 0\ \ \ \hbox{if }\sigma\in \cc I_n, \\
\jump{\bu}_{n,\sigma} & = 0\ \ \ \hbox{if }\sigma\in \cc I_\tau \cup \cc A,
\end{aligned}
\\[2ex]
\bm{\cc C}_\tau(\jump{\bu}_{\sigma},\bm\lambda_{\sigma}) = \bm 0\quad \Longleftrightarrow \quad
\begin{alignedat}{2}
\bm\lambda_{\tau,\sigma} & = \bm 0&\ \ &\ \ \hbox{if }\sigma\in \cc I_n, \\
\jump{\delta\bu}_{\tau,\sigma} & = \bm 0&\ \ &\ \ \hbox{if }\sigma\in \cc I_\tau, \\
\bm\lambda_{\tau,\sigma} - F\lambda_{n,\sigma} \mathbf w_{\tau,\sigma} & = \bm 0
&\ \ &\ \ \hbox{if }\sigma\in \cc A.
\end{alignedat}
\end{gathered}
$$
Notice that, in two dimensions, the equation $\bm\lambda_{\tau,\sigma} - F\lambda_{n,\sigma} \mathbf w_{\tau,\sigma} = \bm 0$ for $\sigma\in \cc A$ turns out to be piecewise linear.
For both algorithms, the iterations are stopped when the relative residual norm is lower than or equal to $10^{-6}$.

Since all the examples discussed in this section are set in a two-dimensional framework, quantities such as $\jump{\bu}_{\tau,\sigma}$ and $\bm\lambda_{\tau,\sigma}$ for a given $\sigma\in\faces_{\D_\bu}$ can be considered scalar, and we remove the bold face from the latter. Let us point out that, although the state given by $\lambda_{n,\sigma} = \lambda_{\tau,\sigma} = 0$ for all $\sigma\in\faces_{\D_\bu}$ and $\bu=\bm 0$ is physically undetermined, in our implementation of the non-smooth Newton and active set methods we assume that this is a \emph{no-contact} state, corresponding to open fractures.
Note that the contact mechanics Jacobian system is computed at each non-smooth Newton or active set iteration using the sparse direct solver UMFPACK~\cite{umd}. Let us refer to~\cite{FRANCESCHINI2019376} for an alternative iterative solver based on a block preconditioner of the saddle point problem. 
Note also that the Lagrange multipliers could be locally eliminated from this system, which is likely to be a key feature for preconditioned iterative solvers as this avoids having to cope with saddle point problems. This property generalizes to any conforming discretization of the displacement field containing a bubble function associated to each fracture face.
{%
\begin{remark}[Nonsingularity of the Jacobian matrix]\label{rem:nonsingularity}
The regularized non-smooth Newton and the active set method both yield a nonsingular Jacobian matrix in the case of Tresca's friction model. In general, one cannot prove nonsingularity when considering Coulomb's model -- the proof can possibly be achieved assuming a sufficiently small friction coefficient, but we didn't come across any issues in our numerical implementation.
A possible workaround could be a Tresca-like fixed-point iterative method (see e.g.~\cite{Wohlmuth08}). Notice also that singularity issues are not raised in~\cite{contact-norvegiens} nor in~\cite{Wohlmuth08}.
\end{remark}
}
\subsection{Contact mechanics in the absence of Darcy flow}
\subsubsection{Unbounded domain with single fracture under compression}
\label{tchelepi_meca}
This example was presented in~\cite{contact-BEM,tchelepi-castelletto-2020,GKT16}. It consists of a 2D unbounded domain containing a single fracture and subject to a compressive remote stress $\bar\sigma = 100\,\text{MPa}$ (cf.~Figure~\ref{test_compression}).
The fracture inclination with respect to the horizontal direction is $\psi = 20^{\circ}$, its length is $2\ell = 2$ m, and the friction coefficient is $F=1/\sqrt{3}$. The same values of Young's modulus and Poisson's ratio as in~\cite{tchelepi-castelletto-2020} are used here, i.e.~$E=25$ GPa and $\nu=0.25$. The analytical solution in terms of the Lagrange multiplier $\bar\lambda_n$, representing the normal stress on the fracture up to the sign, and of the jump of the tangential displacement field, is given by
\begin{equation}\label{sol.compression}
\bar\lambda_n = \bar\sigma\sin^2\psi,\quad
|\jump{\bar\bu}_\tau| = \frac{4(1-\nu)}{E}(\bar\sigma\sin\psi(\cos\psi-F\sin\psi))\sqrt{\ell^2-(\ell^2-\tau^2)},
\end{equation}
where $0\le\tau\le2\ell$ is a curvilinear abscissa along the fracture. Note that since $\bar\lambda_n>0$, we have $\jump{\bar\bu}_n=0$ on the fracture. Boundary conditions are imposed on $\bu$ at specific nodes of the mesh, as shown in Figure~\ref{test_compression}, to respect the symmetry of the expected solution. For this simulation, we sample a $320\times320\,\rm m$ square, and carry out uniform refinements at each step in such a way to compute the solution on meshes containing 100, 200, 400, and 800 faces on the fracture (corresponding, respectively, to 12\,468, 49\,872, 199\,488, and 797\,952 triangular elements). The initial mesh is refined in a neighborhood of the fracture; starting from this mesh, we perform global uniform refinements at each step, i.e.~we do not refine further near the fracture.

In Tables~\ref{perfs_tchelepi_1} and~\ref{perfs_tchelepi_2}, we give an insight into the computational performance of the active set and non-smooth Newton algorithms, depending on the initial guess, as well as on the value of the parameter $c$ introduced in~\eqref{friction_bound}. Figure~\ref{compression_comparison} shows the comparison between the analytical and numerical Lagrange multipliers $\lambda_n$ and jumps of the tangential displacement on the fracture $\jump{\bu}_\tau$, computed on the finest mesh. As in~\cite{tchelepi-castelletto-2020}, the difference between the computed displacement and the analytical one is un\-de\-tectable, and the Lagrange multiplier $\lambda_n$ presents some oscillations in a neighborhood of the fracture tips. As already explained in~\cite{tchelepi-castelletto-2020}, this is due to the sliding of faces close to the fracture tips (notice that all fracture faces are in a contact-slip state). Finally, Figure~\ref{compression_error} shows the convergence properties of this discretization, yielding a first-order convergence for the jumps of the displacement field, and a convergence order slightly greater than 2 for the Lagrange multiplier $\lambda_n$. The former rate is related to the low regularity of $\jump{\bar\bu}$ close to the tips (cf.~the analytical expression~\eqref{sol.compression}), the latter is likely related to the fact that $\bar\lambda_n$ is constant. Because of the oscillations of the approximation $\lambda_n$ close to the fracture tips, as in~\cite{tchelepi-castelletto-2020}, we consider the central $90\%$ of the fracture size to compute the norm of the error. Notice also that,
since $\jump{\bar\bu}_n=0$, the relative error on the normal jump on the fracture is not defined. The absolute error of the normal jump converges at order 1, and the face mean values of the normal jump are actually zero up to solver accuracy, as expected. 
\begin{table}
\centering
{

    \begin{tabular}{|c|c|c|c|c|c|c|}
\hline  \multicolumn{7}{|c|}{Initial guess:  $\lambda_{n} = 100\,\rm Pa$, $\lambda_{\tau} = 0$, $\bu = \mathbf 0$ (stick) } \\  \hline
   &  \multicolumn{3}{c|}{Active set} & \multicolumn{3}{c|}{Regularized NS Newton} \\ \hline
$c$ (N/m$^3$) & \multicolumn{3}{c|}{$10^6$, $10^9$, $10^{11}$} & \multicolumn{3}{c|}{$10^6$, $10^9$, $10^{11}$} \\ \hline
NbFracFaces& 100 & 200 &  400 & 100 & 200 &  400 \\ \hline
Iterations & 2 & 2 &  2  &  2 & 2 & 2\\ \hline
    \end{tabular}
}
 \caption{Performance of the active set and regularized non-smooth Newton algorithms for the example of Section~\ref{tchelepi_meca}, for the initial guess $\lambda_{n} = 100\,\rm Pa$, $\lambda_{\tau} = 0$, $\bu = \mathbf 0$. NbFracFaces is the number of faces in the fracture.} 
  \label{perfs_tchelepi_1}
\end{table}

\begin{table}
\centering
{

    \begin{tabular}{|c|c|c|c|c|c|c|}
\hline  \multicolumn{7}{|c|}{Initial guess:  $\lambda_{n} = \lambda_{\tau} = 0$, $\bu = \mathbf 0$ (open)} \\  \hline
   &  \multicolumn{3}{c|}{Active set} & \multicolumn{3}{c|}{Regularized NS Newton} \\ \hline
$c$ (N/m$^3$) & \multicolumn{3}{c|}{$10^{11}$} & \multicolumn{3}{c|}{$10^{11}$} \\ \hline
NbFracFaces& 100 & 200 &  400 & 100 & 200 &  400 \\ \hline
Iterations & 2 & 2 &  2  &  6& 6 & 6\\ \hline
$c$ (N/m$^3$)& \multicolumn{3}{c|}{$10^6$, $10^9$} & \multicolumn{3}{c|}{$10^6$, $10^9$} \\ \hline
NbFracFaces& 100 & 200 &  400 & 100 & 200 &  400 \\ \hline
Iterations & 2 & 2 &  2  &  3& 3 & 3\\ \hline

    \end{tabular}
 }
 \caption{Performance of the active set and regularized non-smooth Newton algorithms for the example of Section~\ref{tchelepi_meca}, for the initial guess $\lambda_{n} = \lambda_{\tau} = 0$, $\bu = \mathbf 0$.  NbFracFaces is the number of faces in the fracture.} 
  \label{perfs_tchelepi_2}
\end{table}

\begin{figure}
\centering

\subfloat[]{
\raisebox{.45cm}{
\includegraphics[keepaspectratio=true,scale=.35]{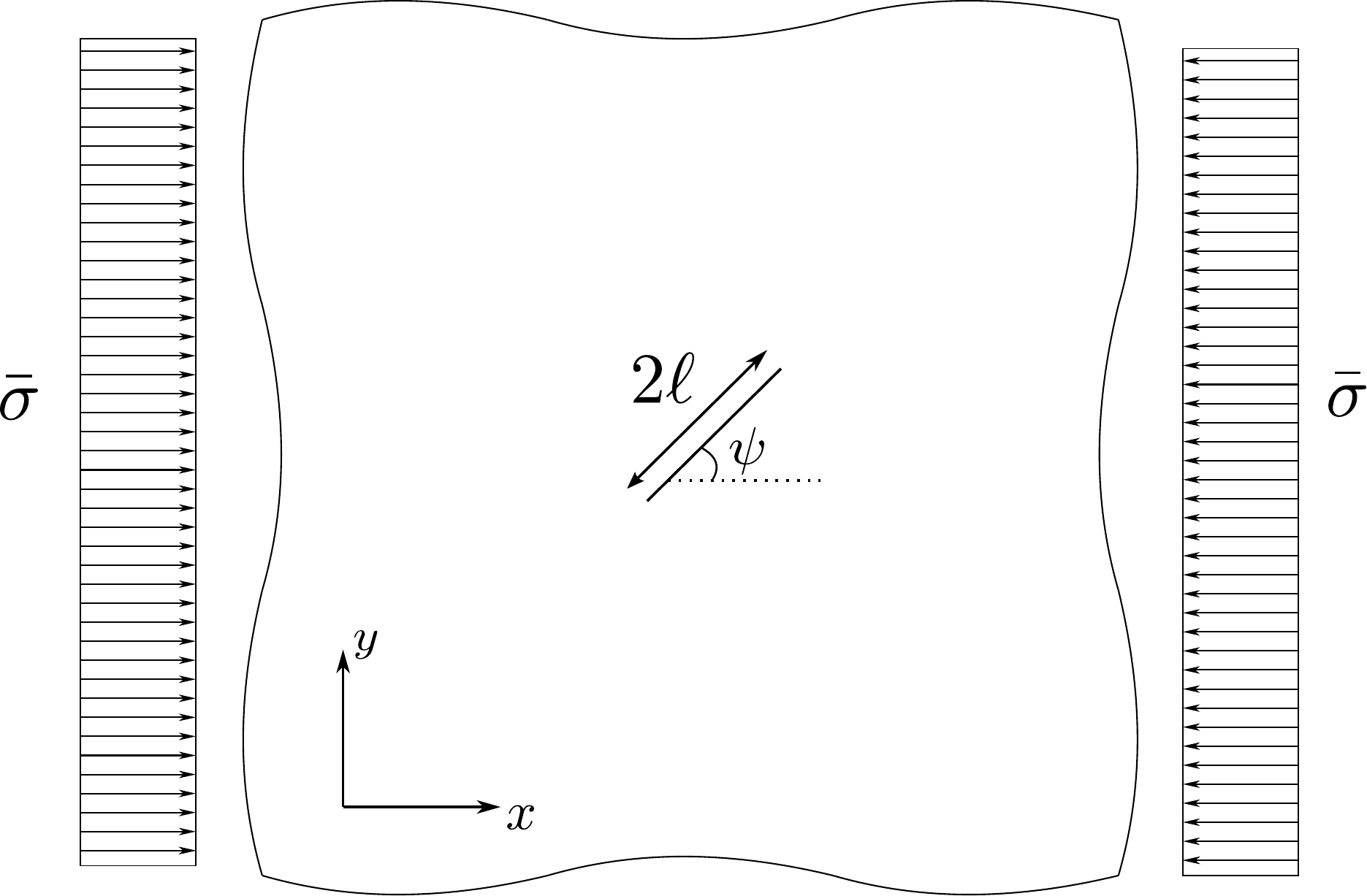}}
}
\hspace{2cm}
\subfloat[]{\includegraphics[keepaspectratio=true,scale=.1]{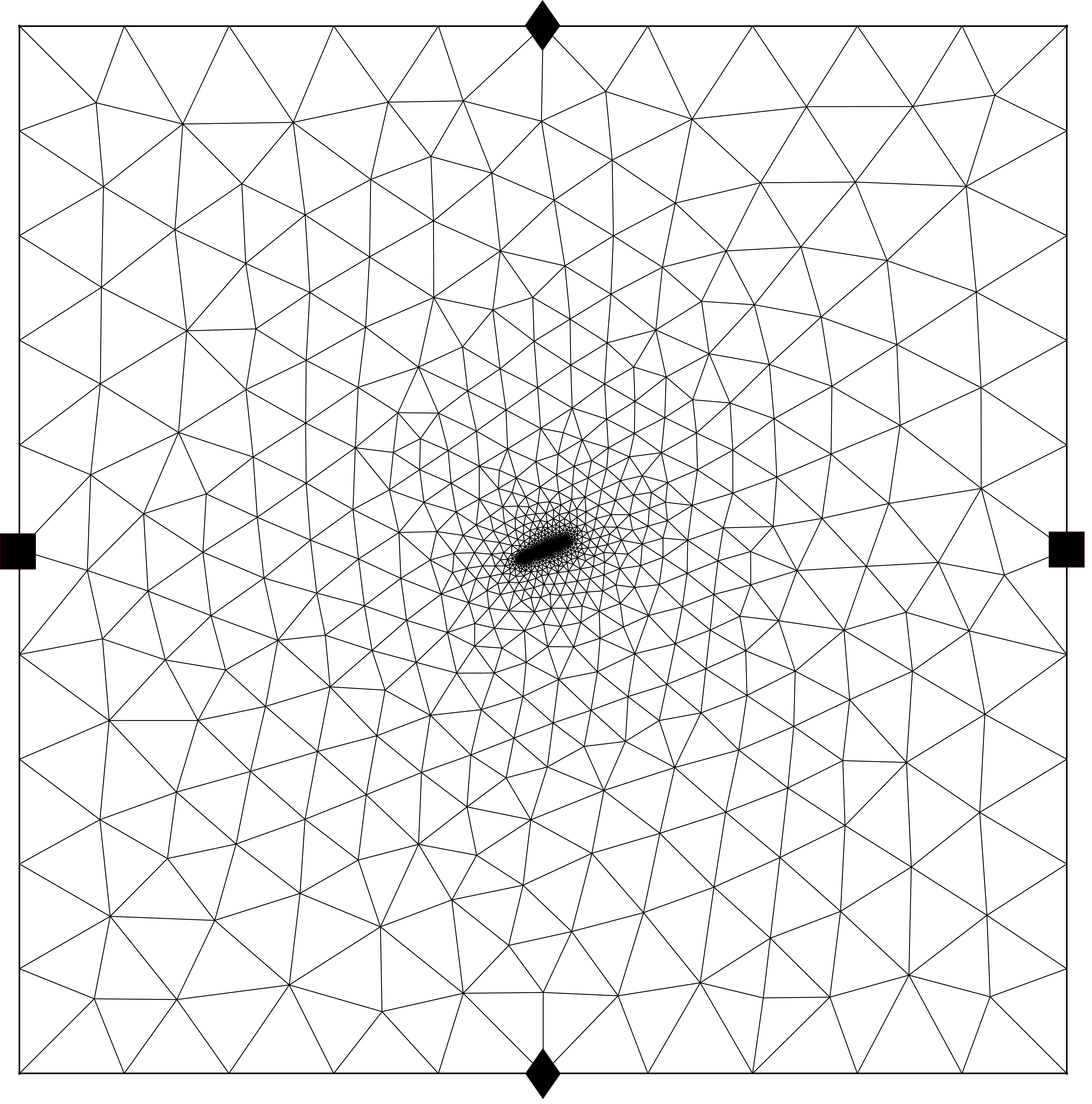}}\\
\caption{Unbounded domain containing a single fracture under uniform compression (a) and mesh including nodes for boundary conditions ($\blacklozenge$: $u_x = 0$, $\blacksquare$: $u_y=0$), for the example of Section~\ref{tchelepi_meca}.}
\label{test_compression}
\end{figure}

\begin{figure}
\centering
\subfloat[]{
\includegraphics[keepaspectratio=true,scale=.635]{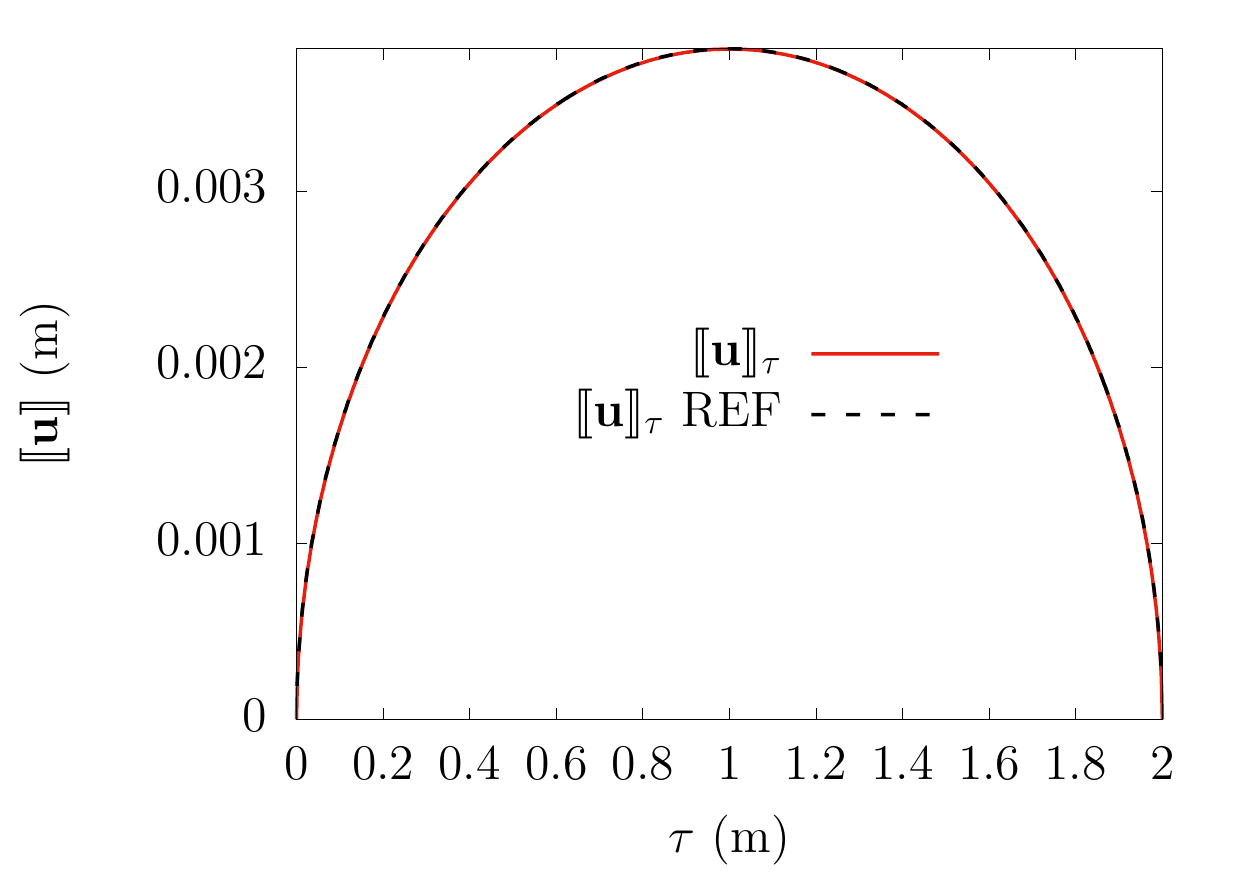}}
\subfloat[]{\includegraphics[keepaspectratio=true,scale=.635]{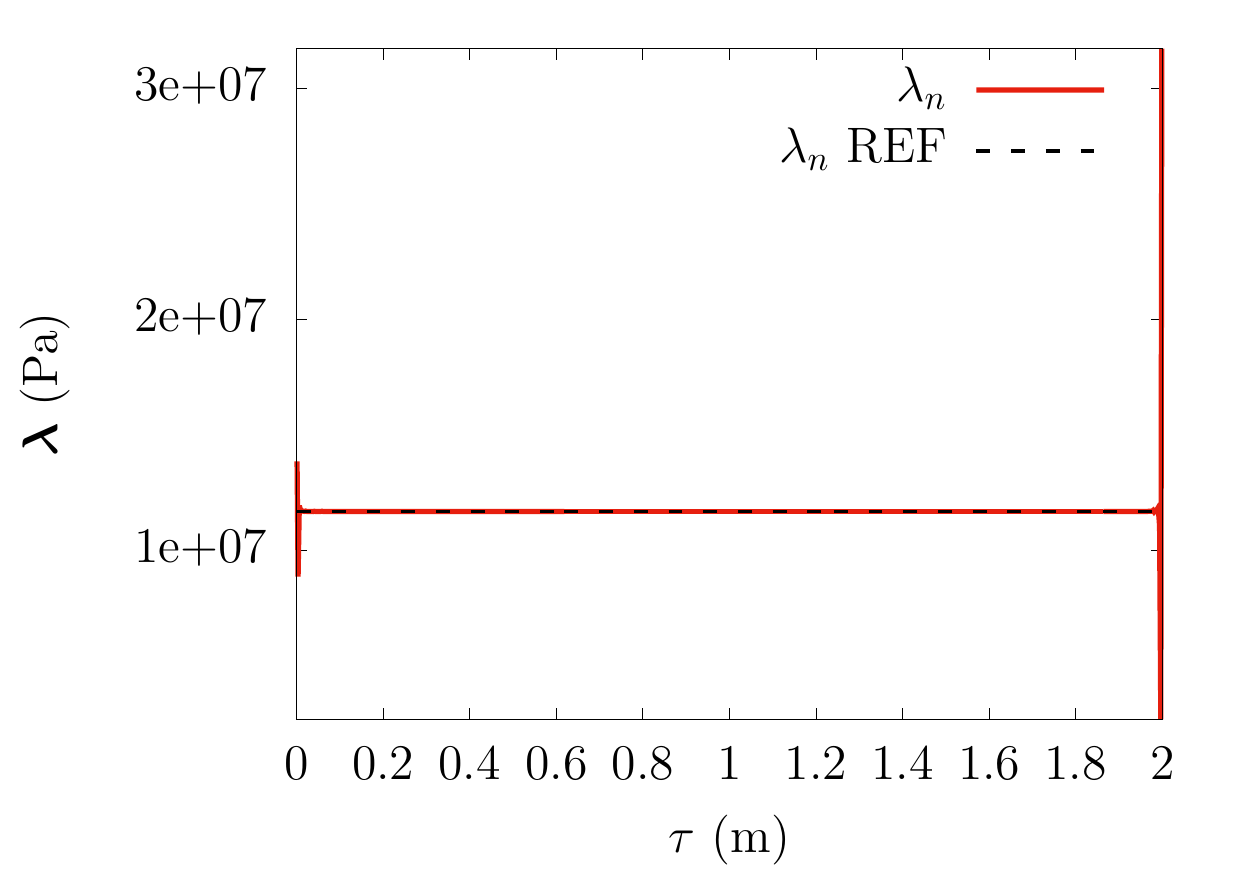}}

\caption{Comparison of the numerical and analytical (labeled as REF) solutions in terms of $\jump{\bu}_\tau$ (a) and $\lambda_n$ (b),
for the example of Section~\ref{tchelepi_meca}.}
\label{compression_comparison}
\end{figure}
\begin{figure}
\centering

\includegraphics[keepaspectratio=true,scale=.65]{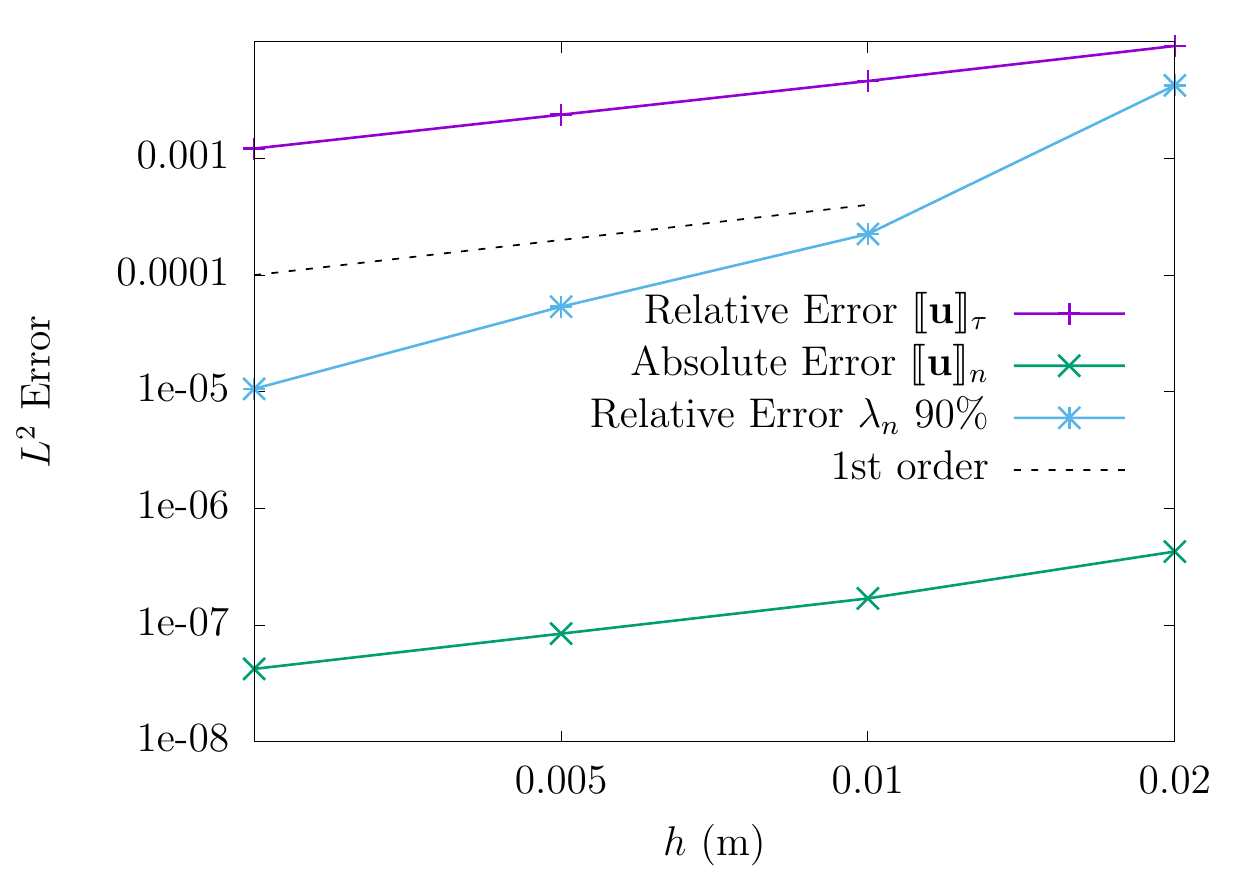}
\caption{Relative and absolute $L^2$ errors between the numerical and analytical solutions in terms of $\jump{\bu}$ and $\bm\lambda$, for the example of Section~\ref{tchelepi_meca}.}
\label{compression_error}
\end{figure}

\subsubsection{Rectangular domain with six fractures}
\label{bergen_meca}
As a second example to illustrate the behavior of our approach, we consider the test case presented in~\cite[Section~4.1]{contact-norvegiens}, where a $2\times1$\,m domain including a network $\Gamma = \bigcup_{i=1}^6 \Gamma_i$ of six fractures is considered, cf.~Figure~\ref{bergen}. Fracture 1 is made up of two sub-fractures forming a corner, whereas one of the tips of fracture 5 lies on the boundary of the domain.

We use the same values of Young's modulus and Poisson's ratio, $E=4$ GPa and $\nu = 0.2$, and the same set of boundary conditions as in~\cite{contact-norvegiens}, that is, the two vertical sides of the domain are free, and we impose $\bu = \bf 0$ on the bottom side and $\bu = [0.005,-0.002]^\top$~m on the top side. The friction coefficient is 
$F_i(\x) = 0.5\left(1+10\exp(-D_i(\x)^2/0.005\,\rm m^2)\right)$, with $i\in\{1,\dots,6\}$ the fracture index, $\x\in\Gamma_i$ a generic point on fracture $i$, and $D_i(\x)$ the minimum distance from $\x$ to the tips of fracture $i$ (the bend in fracture 1 is not considered as a tip). 

Figures~\ref{bergen_u} and~\ref{bergen_lambda} show the fracture aperture $-\jump{\bu}_n$ and sliding $\jump{\bu}_\tau$ against the distance $\tau$ from the tips for each of the six fractures. Except for the sign (recall that the vector Lagrange multiplier is $\bm\lambda = -\bbsig(\bu^+)\n^+$ with $+$ one of the two sides of the domain with respect to a given fracture), our numerical results are in good agreement with the results presented in~\cite[Section~4.1]{contact-norvegiens}, to which we refer for a more detailed discussion about the physical interpretation of these results.

In Tables~\ref{perfs_bergen_1} and~\ref{perfs_bergen_2}, analogously to the previous example, we study the computational performances of the active set and non-smooth Newton methods for this example, depending on the initial guess and letting the parameter $c$ take on three different values.

Since no closed-form solution is available for this test case, to evaluate the convergence of our method we compute a reference solution on a fine mesh made of 730\,880 triangular elements. Figure~\ref{bergen_errors} shows the convergence rates obtained for both $\jump{\bu}$ and $\bm\lambda$. Analogously to the previous example, we perform uniform mesh refinements at each step, and do not refine only in a neighborhood of tips. As in~\cite{contact-norvegiens}, an asymptotic first-order convergence is observed for the vector Lagrange multiplier for all fractures, except fracture 4 which exhibits a convergence rate close to 2 owing to its entire contact-stick state, and fracture 1 which exhibits a lower rate due to the additional singularity induced by the corner. For the jump of the displacement field across fractures, we obtain an asymptotic convergence rate equal to 1.5 for all fractures.
\begin{figure}
\centering
\includegraphics[keepaspectratio=true,scale=.175]{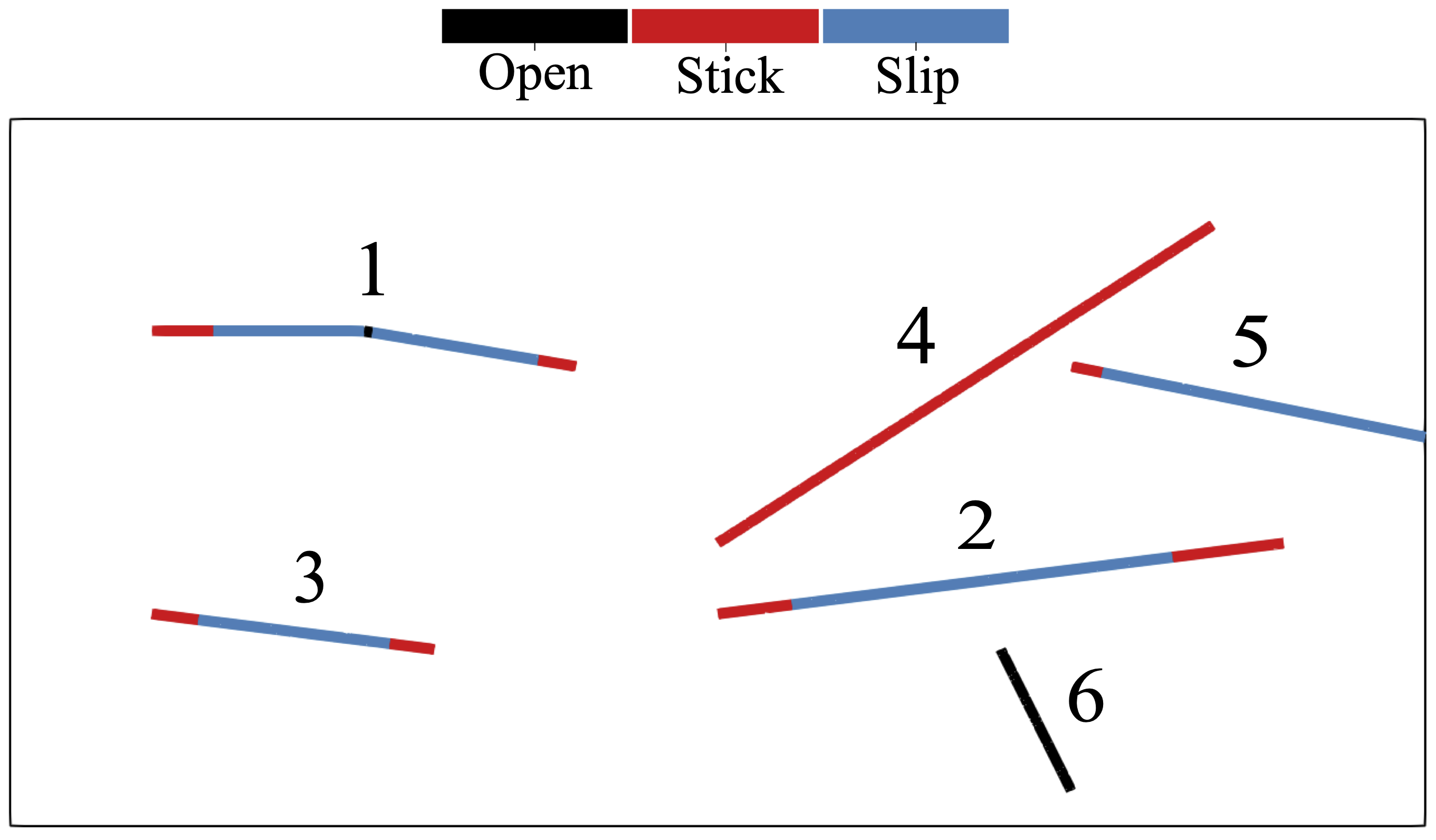}
\caption{Two-dimensional, $2\times1$\,m domain containing six fractures. Fracture 1 comprises two sub-fractures making a corner, and fracture 5 has a tip on the boundary. The contact state of each fracture obtained by the simulation is also shown.}
\label{bergen}
\end{figure}
\begin{table}
\centering
{
    \begin{tabular}{|c|c|c|c|c|c|c|c|c|}
\hline  \multicolumn{9}{|c|}{Initial guess:  $\lambda_{n} = 100$ Pa, $\lambda_{\tau} = 0$, $\bu = \mathbf 0$ (stick) } \\  \hline
   &  \multicolumn{4}{c|}{Active set} & \multicolumn{4}{c|}{Regularized NS Newton} \\ \hline
$c$ (N/m$^3$) & \multicolumn{4}{c|}{$10^6$, $10^9$, $10^{11}$} 
& \multicolumn{4}{c|}{$10^6$, $10^9$, $10^{11}$} \\ \hline
NbCells & 2\,855 & 11\,420 &  45\,680 & 182\,720 & 2\,855 & 11\,420 &  45\,680 & 182\,720 \\ \hline
Iterations & 3 & 6 &  6  & 8 & 3 & 6 & 6 & 8 \\ \hline
    \end{tabular}
  }
 \caption{Performance of the active set and regularized non-smooth Newton algorithms for the example of Section~\ref{bergen_meca}, for the initial guess $\lambda_{n} = 100\,\rm Pa$, $\lambda_{\tau} = 0$, $\bu = \mathbf 0$.  NbCells is the number of cells in the mesh.} 
  \label{perfs_bergen_1}
\end{table}

\begin{table}
\centering
{
  \resizebox{14cm}{!}{
    \begin{tabular}{|c|c|c|c|c|c|c|c|c|}
\hline  \multicolumn{9}{|c|}{Initial guess:  $\lambda_{n} = \lambda_{\tau} = 0$, $\bu = \mathbf 0$ (open)} \\  \hline
   &  \multicolumn{4}{c|}{Active set} & \multicolumn{4}{c|}{Regularized NS Newton} \\ \hline
$c$ (N/m$^3$) & \multicolumn{4}{c|}{$10^{11}$} & \multicolumn{4}{c|}{$10^{11}$} \\ \hline
NbCells & 2\,855 & 11\,420 &  45\,680 & 182\,720  & 2\,855 & 11\,420 &  45\,680 & 182\,720  \\ \hline
Iterations & 8 & no conv. &  no conv. & 10 &  10 & 9 & 10 & 10  \\ \hline
$c$ (N/m$^3$) & \multicolumn{4}{c|}{$10^{9}$} & \multicolumn{4}{c|}{$10^9$} \\ \hline
NbCells & 2\,855 & 11\,420 &  45\,680 & 182\,720 & 2\,855 & 11\,420 &  45\,680 & 182\,720  \\ \hline
Iterations & 7 & 9 &  8 & 9 &  11 & 11 & 12 & 29  \\ \hline
$c$ (N/m$^3$) & \multicolumn{4}{c|}{$10^{6}$} & \multicolumn{4}{c|}{$10^6$} \\ \hline
NbCells & 2\,855 & 11\,420 &  45\,680 & 182\,720  & 2\,855 & 11\,420 &  45\,680 & 182\,720 \\ \hline
Iterations & 7 & 9 &  8 & 9 &  no conv. & 18 & no conv. & no conv.  \\ \hline
    \end{tabular}
    }
  }
 \caption{Performance of the active set and regularized non-smooth Newton algorithms for the example of Section~\ref{bergen_meca}, for the initial guess $\lambda_{n} = \lambda_{\tau} = 0$, $\bu = \mathbf 0$. 
 NbCells is the number of cells in the mesh.} 
  \label{perfs_bergen_2}
\end{table}

\begin{figure}
\captionsetup[subfigure]{labelformat=empty}
\centering

\subfloat[]{
\includegraphics[keepaspectratio=true,scale=.65]{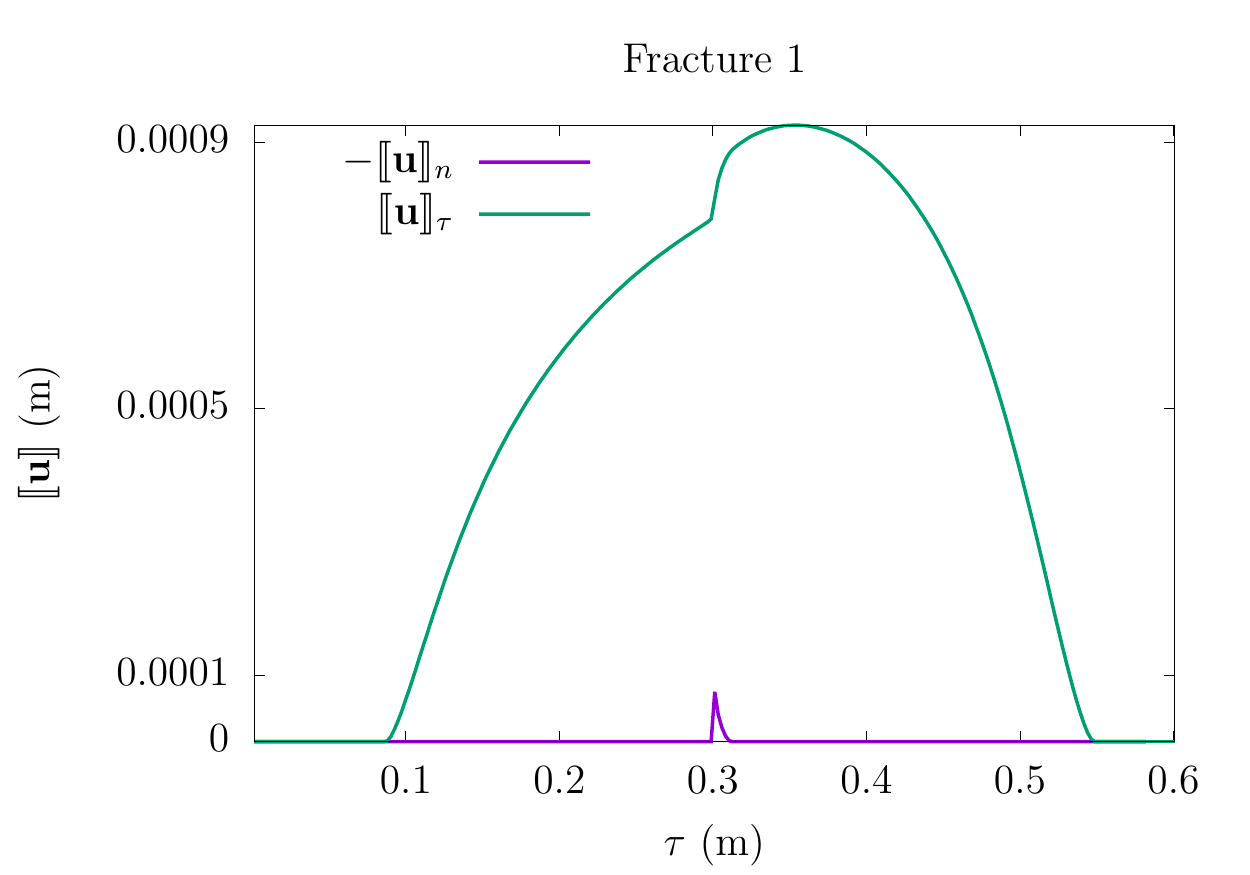}}
\subfloat[]{
\includegraphics[keepaspectratio=true,scale=.65]{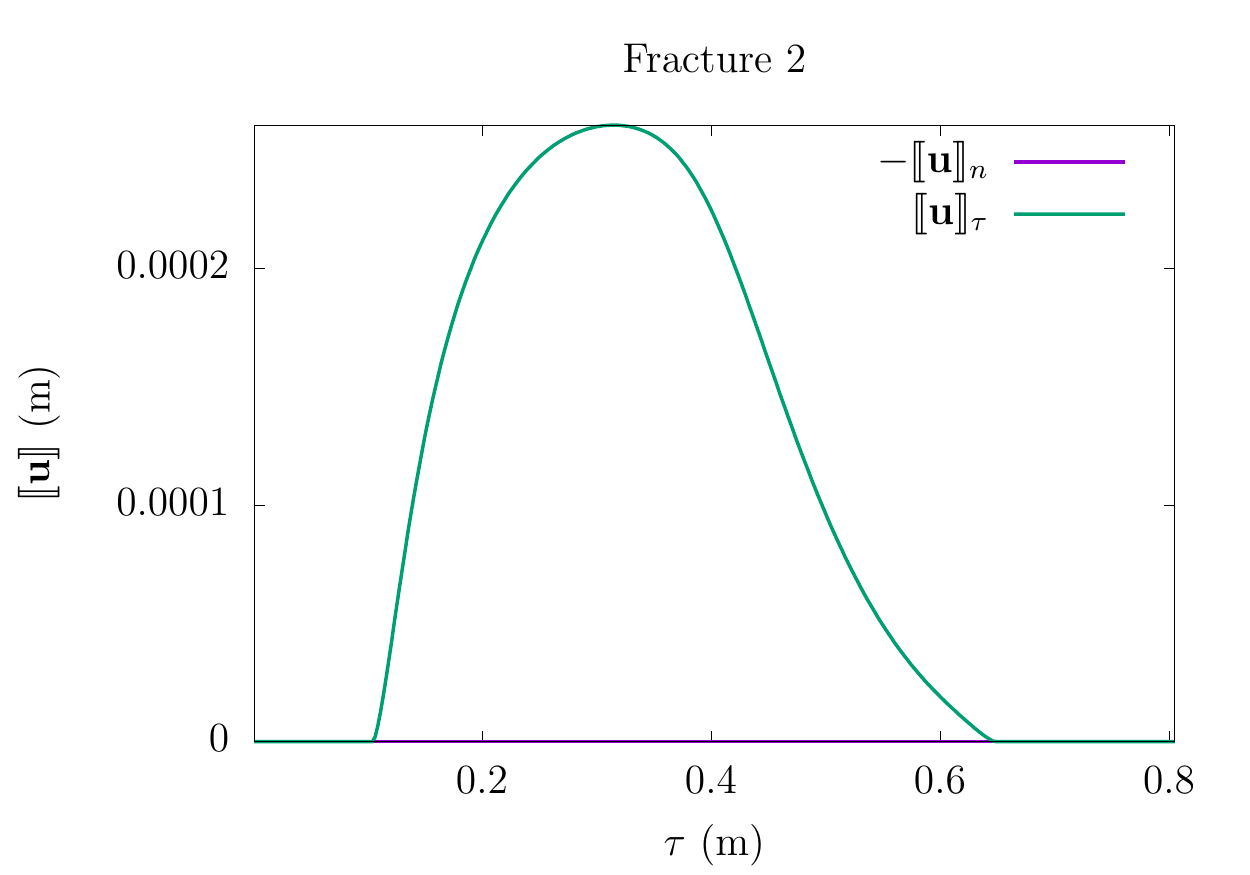}} \\
\subfloat[]{
\includegraphics[keepaspectratio=true,scale=.65]{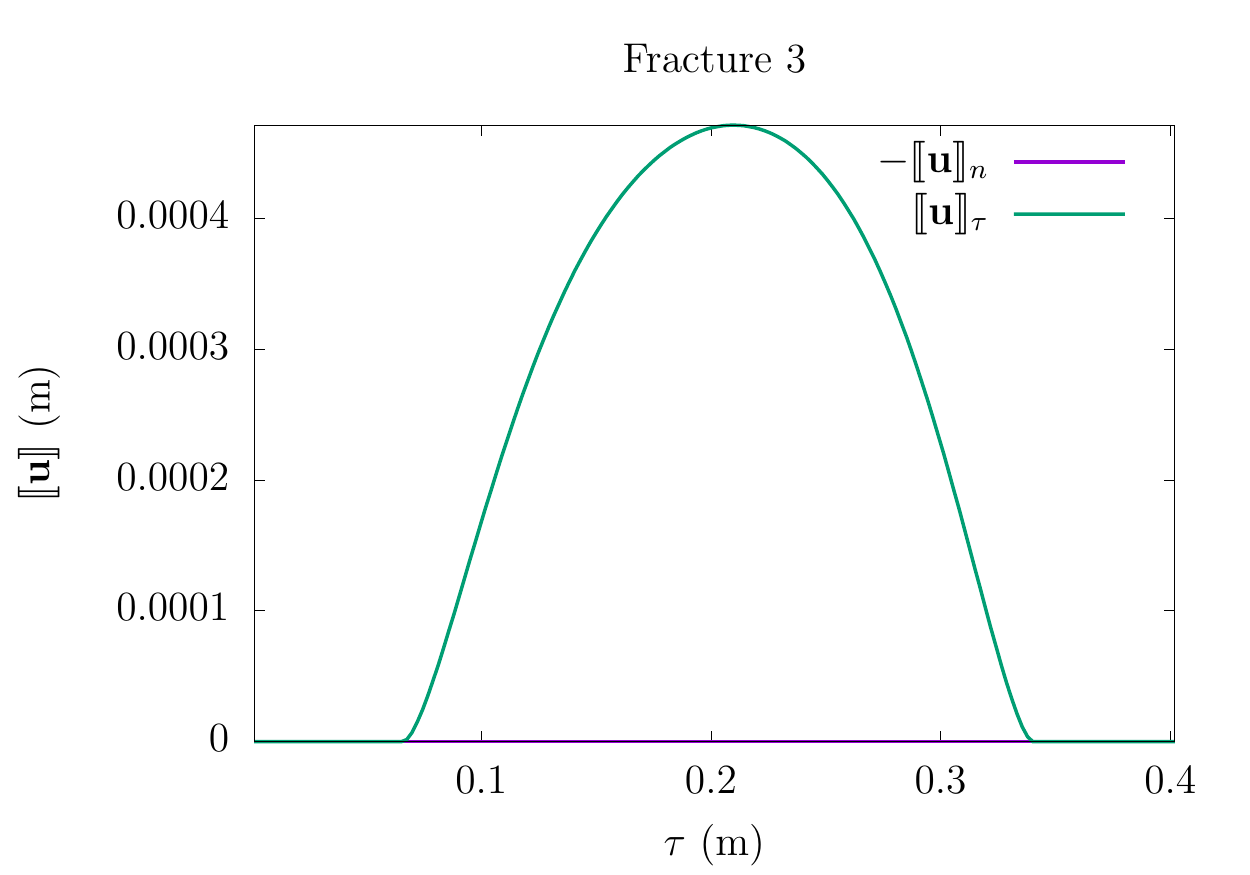}} 
\subfloat[]{
\includegraphics[keepaspectratio=true,scale=.65]{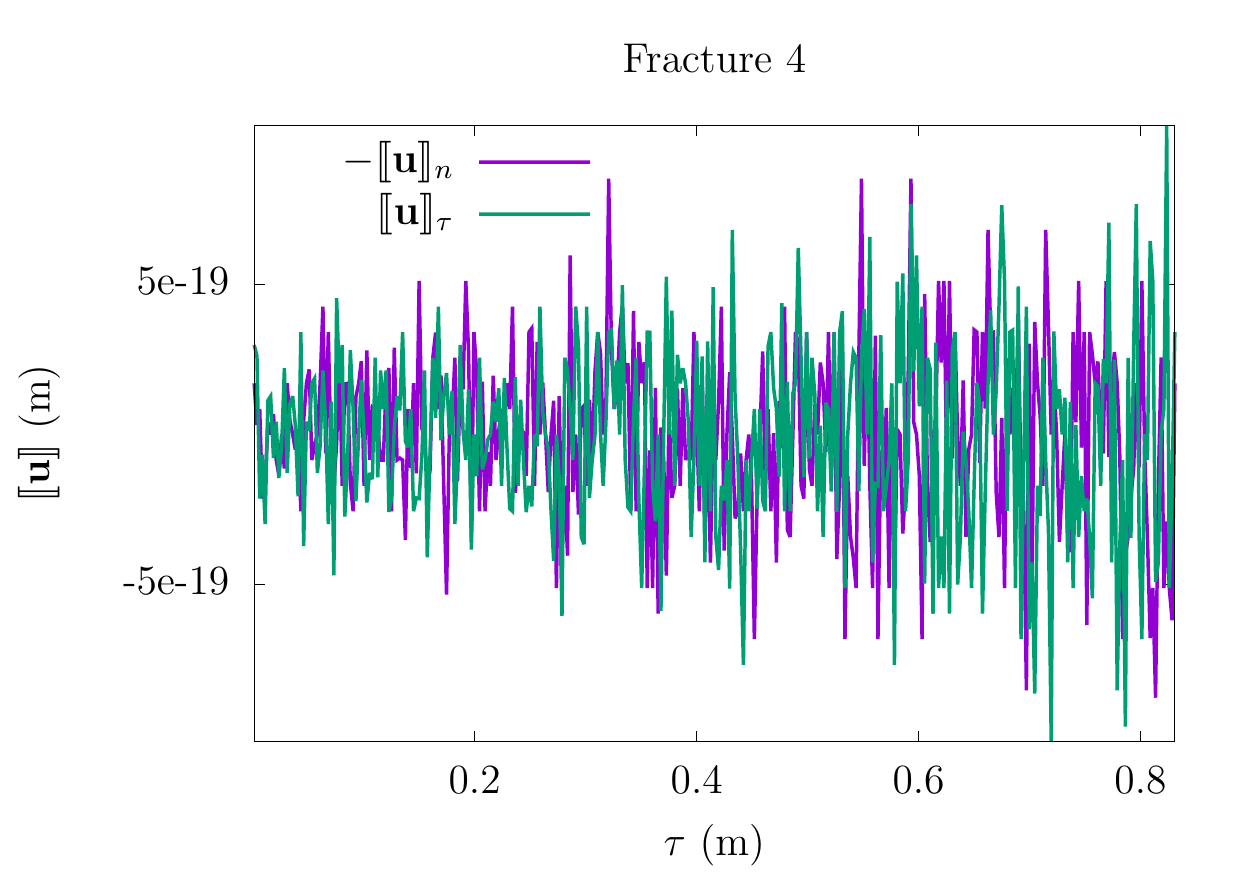}} \\

\subfloat[]{
\includegraphics[keepaspectratio=true,scale=.65]{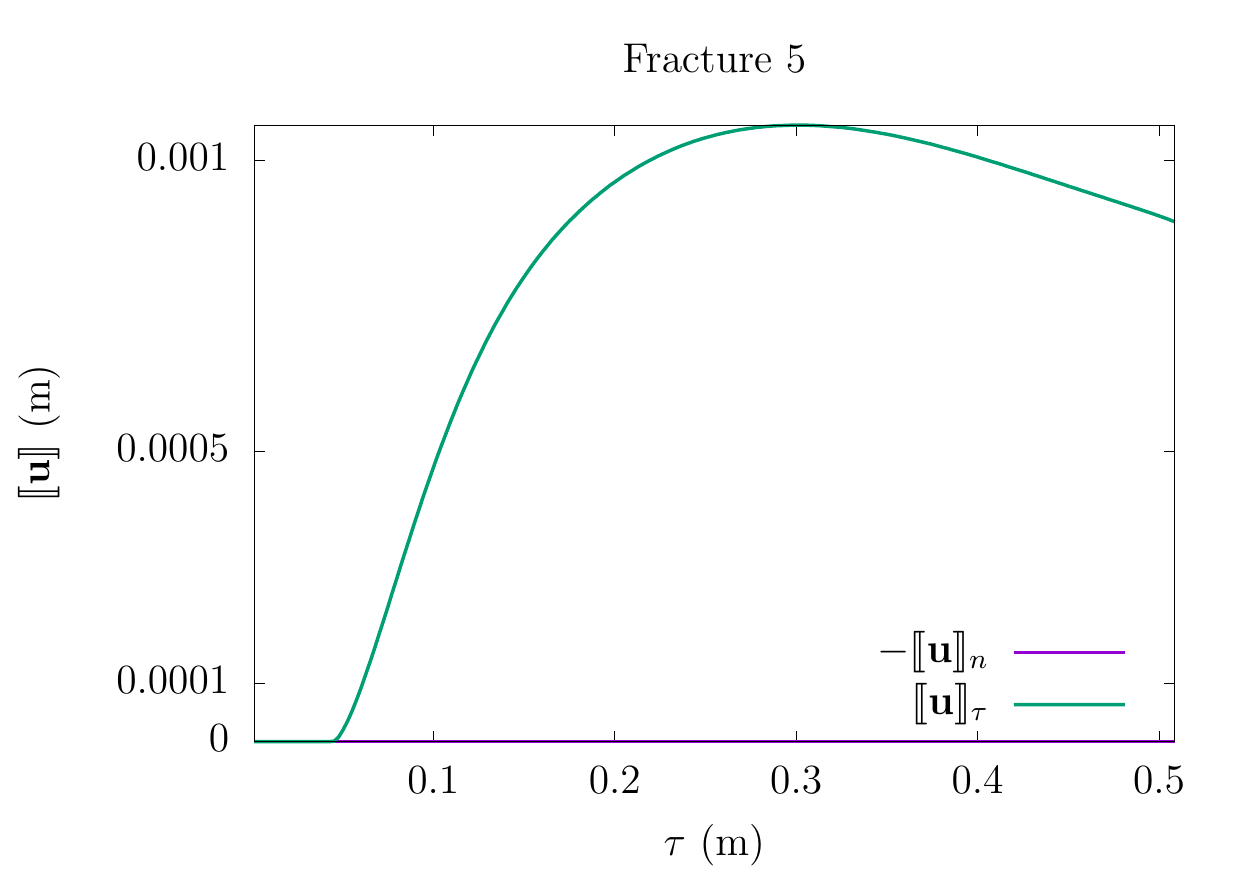}}
\subfloat[]{
\includegraphics[keepaspectratio=true,scale=.65]{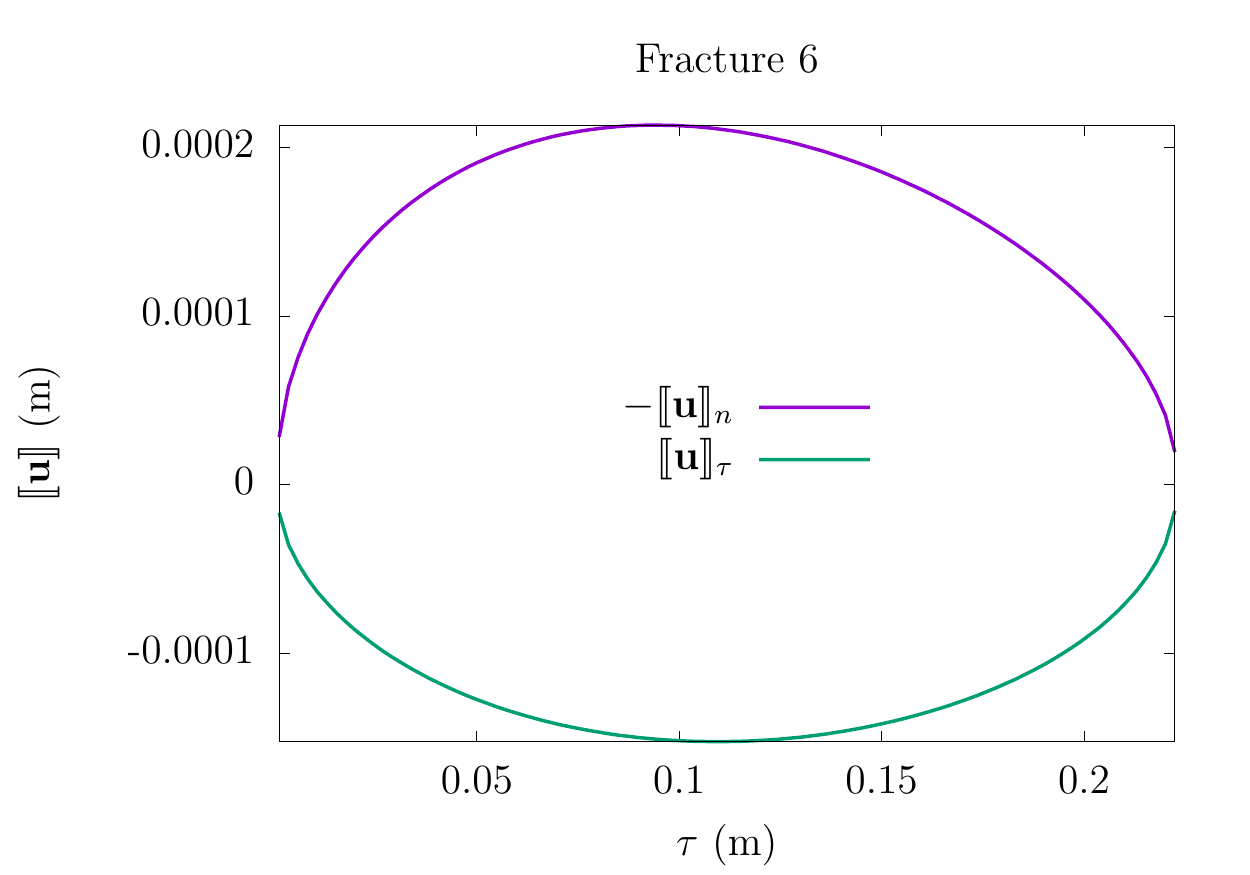}}

\caption{Normal and tangential jumps of the displacement field vs.~distance from the tips (labeled as $\tau$) for each of the six fractures, for the example of Section~\ref{bergen_meca}.}
\label{bergen_u}
\end{figure}

\begin{figure}
\captionsetup[subfigure]{labelformat=empty}
\centering

\subfloat[]{
\includegraphics[keepaspectratio=true,scale=.65]{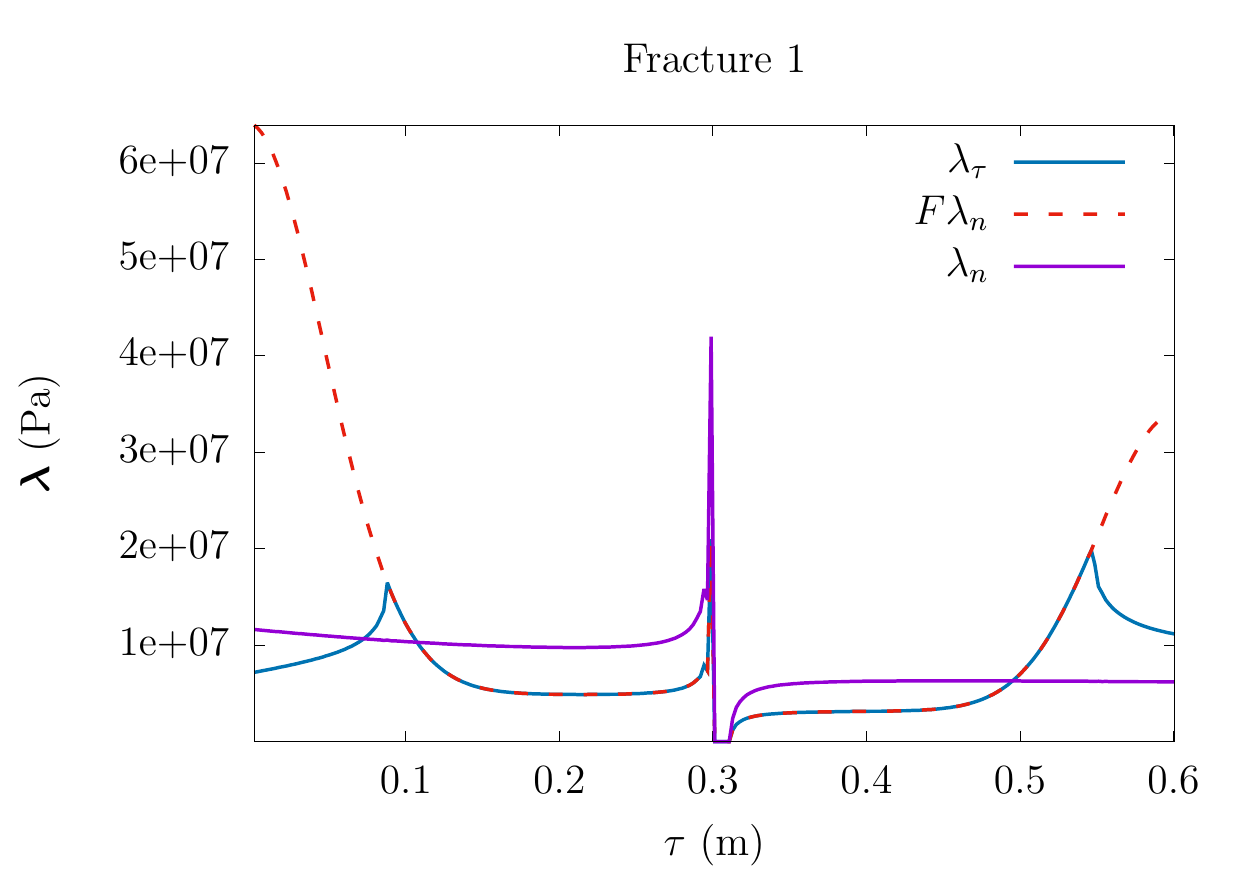}}
\subfloat[]{
\includegraphics[keepaspectratio=true,scale=.65]{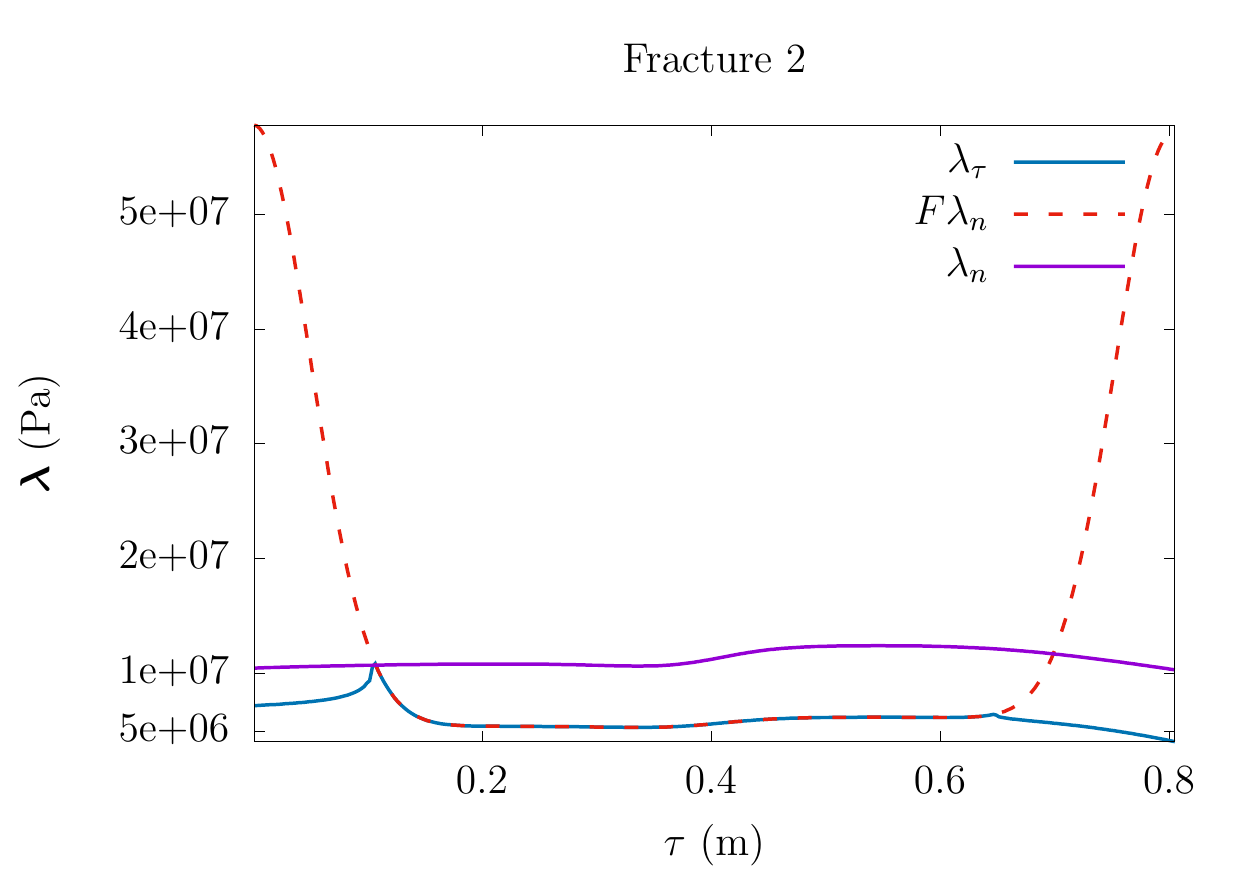}} \\
\subfloat[]{
\includegraphics[keepaspectratio=true,scale=.65]{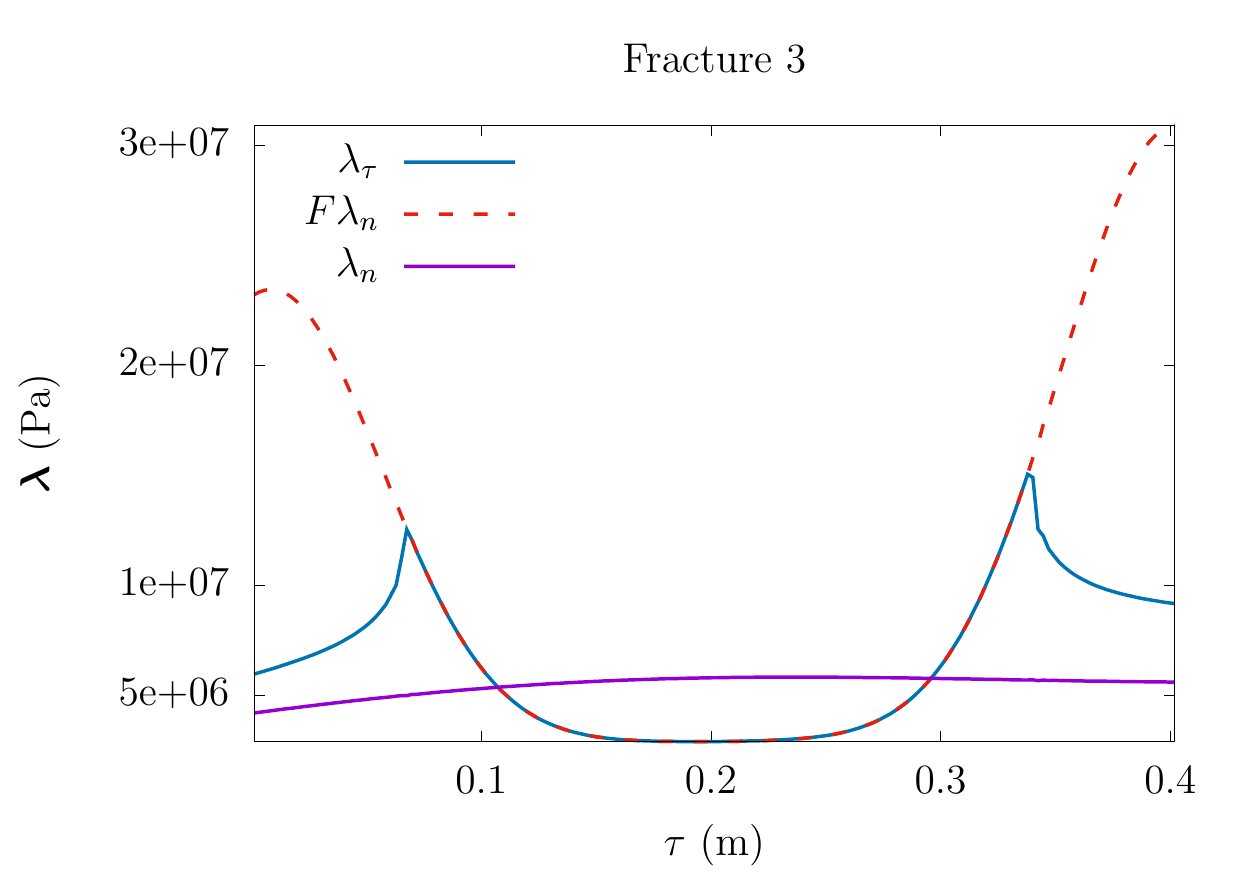}} 
\subfloat[]{
\includegraphics[keepaspectratio=true,scale=.65]{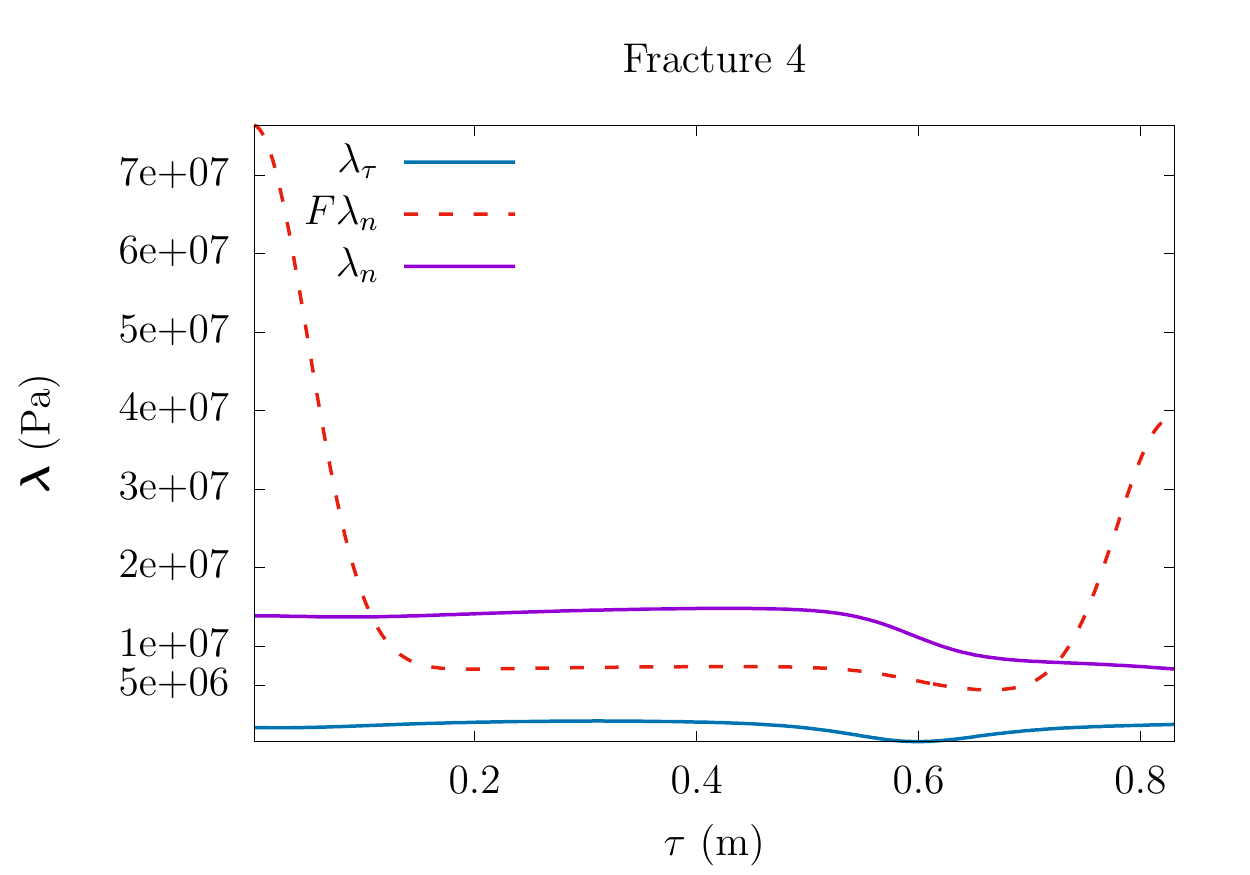}} \\

\subfloat[]{
\includegraphics[keepaspectratio=true,scale=.65]{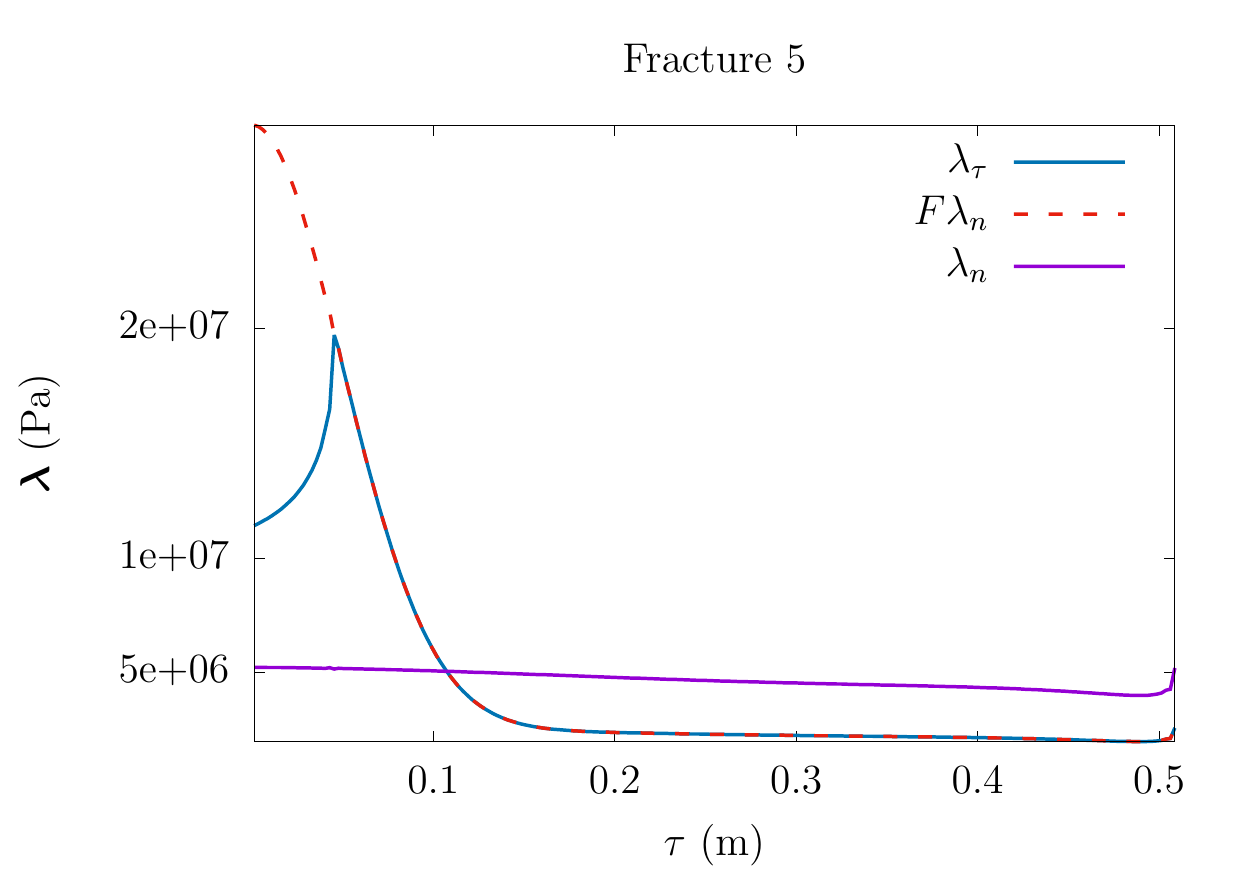}}
\subfloat[]{
\includegraphics[keepaspectratio=true,scale=.65]{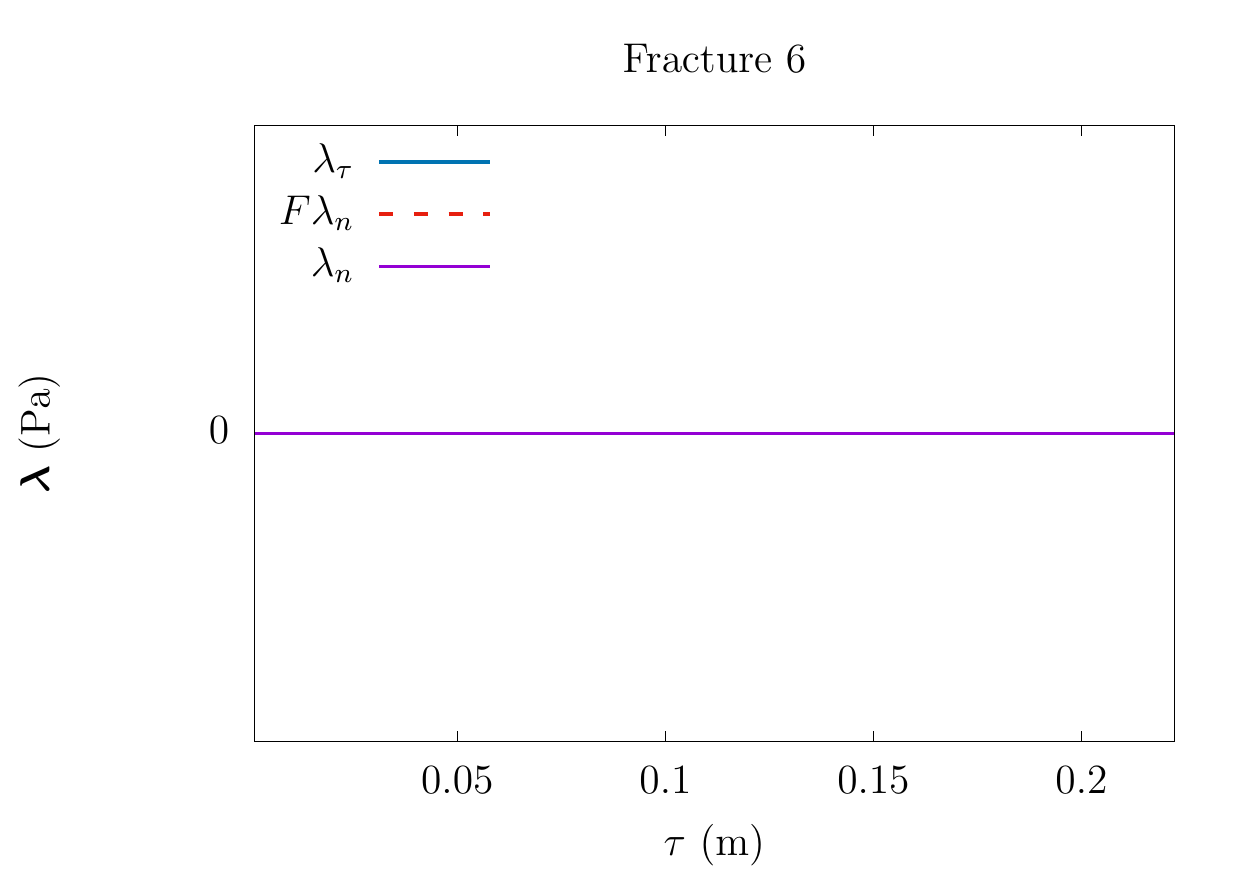}}

\caption{Lagrange multipliers vs.~distance from the tips (labeled as $\tau$) for each of the six fractures, for the example of Section~\ref{bergen_meca}.}
\label{bergen_lambda}
\end{figure}

\begin{figure}
\subfloat[]{
\includegraphics[keepaspectratio=true,scale=.665]{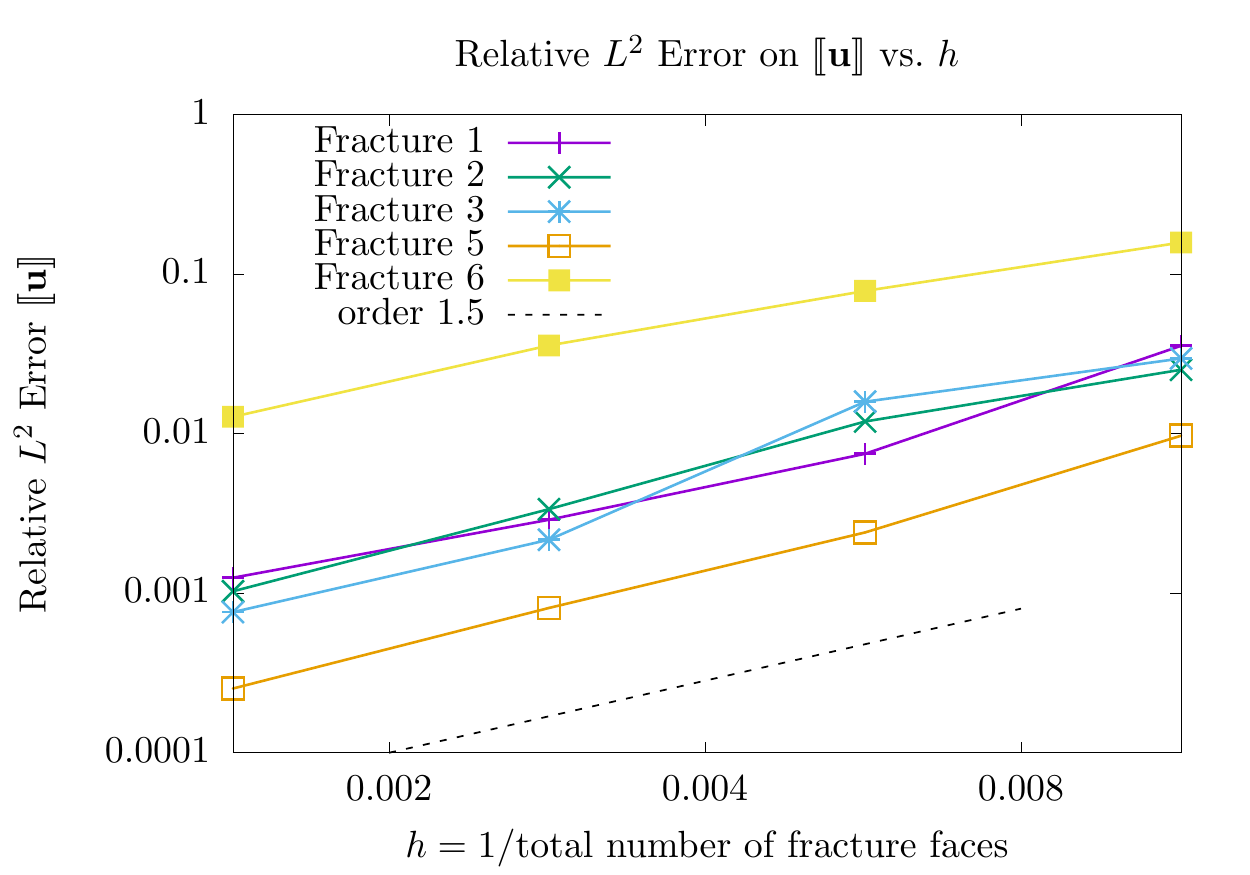}}
\subfloat[]{
\includegraphics[keepaspectratio=true,scale=.665]{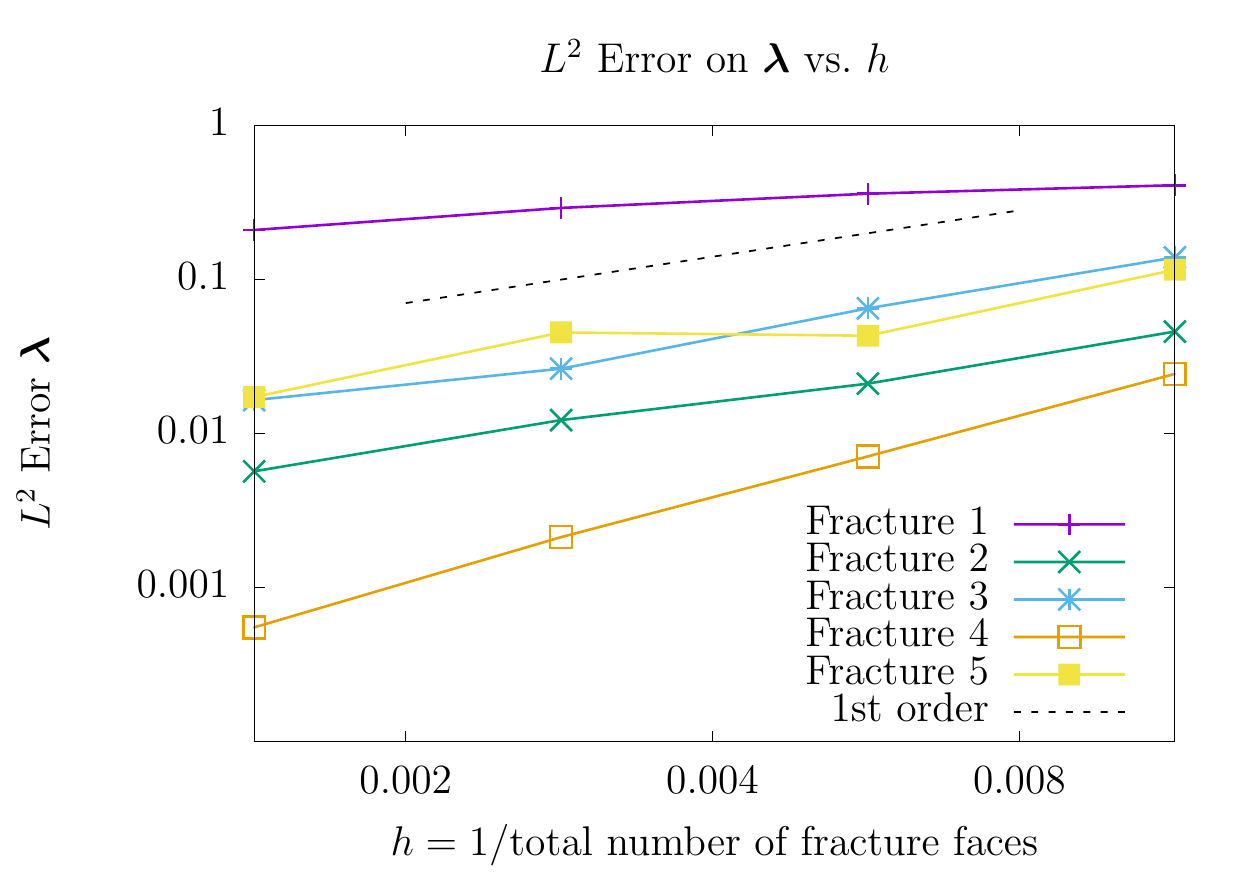}}
\caption{Relative $L^2$ error between the current and reference solutions in terms of $\jump{\bu}$ (a) and $\bm\lambda$ (b), yielding a 1.5-order and a 1st-order convergence, respectively.}
\label{bergen_errors}
\end{figure}

\subsection{Contact mechanics and two-phase Darcy flow: drying model in radioactive waste geological storage}
\label{cas_andra}

This test case mostly presents the same geometry, boundary conditions, and data set as in~\cite{GDM-poromeca-disc}. Unlike here, however, the model and simulations in this reference assume that all fractures remain open during the process. 

We consider a hollow cylinder (Figure~\ref{geometry_andra}) made of a low-permeability porous medium, with an axisymmetric oblique fracture network, subject to axisymmetric loads -- uniform pressures exerted on the internal and external surfaces. Using cylindrical coordinates $(x,r,\theta)$, the problem is reduced to a two-dimensional formulation on the radial section, and the displacement field only consists of its axial and radial components:
$$
\begin{gathered}
\bu(t;x,r,\theta) = u_x(t;x,r) \mathbf e_x + u_r(t;x,r) \mathbf e_r(\theta),\quad
\mathbf e_r(\theta) = (\cos\theta) \mathbf e_y + (\sin\theta) \mathbf e_z,\\
0\le x \le L, \quad R_{\mathrm{int}} \le r \le R_{\mathrm{ext}},\quad 0 \le \theta \le 2\pi,
\end{gathered}
$$
where $t\in[0,T]$
and the vectors of the system of cylindrical coordinates (see Figure~\ref{geometry_andra}) are the axial unit vector $\mathbf e_x$, the radial unit vector $\mathbf e_r = \mathbf e_r(\theta)$, and the orthoradial unit vector $\mathbf e_\theta$. The final time is set to $T=200$ years. The geometry is characterized by the following data set: length $L=10\,\text{m}$, internal and external radii $R_\mathrm{int} = 5\,\mathrm{m}$ and $R_\mathrm{ext} = 35\,\mathrm{m}$; two consecutive fractures are spaced by 1.25\,m. The matrix is characterized by the Young modulus $E = 4.857$~GPa and the Poisson ratio $\nu = 0.21$, Biot's coefficient and modulus $b=1$ and $M=1\,\text{GPa}$ respectively, and by an initial porosity $\phi_m^0 = 0.15$. The matrix permeability tensor is assumed isotropic, i.e.~$\bb K_m = K_m \bb I$, and the permeability $K_m$ is expressed in terms of the current porosity $\phi_m$ by the Kozeny--Carman law:
$$K_m(\phi_m) = K_m^0 \frac{(1-\phi_m^0)^2}{(\phi_m^0)^3} \frac{\phi_m^3}{(1-\phi_m)^2},$$
discretized explicitly in time and with $K_m^0 = 5\cdot10^{-20}\rm\,m^2$. We note that the equation \eqref{GD_hydro} of the gradient scheme can trivially be adapted to account for this dependency of the permeability with respect to the porosity (replace $\bb K_m$ with $\bb K_m(\phi_\D)$), and that under the condition $\phi_{\rm min}\le \phi_\D\le \phi_{\rm max}$ with $0\le\phi_{\rm min}\le\phi_{\rm max}<1$, the estimates \eqref{apriori.est} remain valid.
The normal transmissibility of fractures is  $\Lambda_f = 10^{-9}$ m. The friction coefficient is assumed constant over the whole fracture network and equal to $F = 0.5$. The initial fracture aperture is taken here equal to $1$~mm, instead of $1$~cm as in~\cite{GDM-poromeca-disc} (therein, the larger initial aperture was required to ensure that $d_{f,\D_\bu}$ remain strictly positive throughout the simulation). The initial guess for the non-smooth Newton algorithm for the mechanics is given by the previous time step solution and set to $\bu = \mathbf 0$, $\bm\lambda = \mathbf 0$ (open fractures) at the first time step. Finally, the parameter $c$ appearing in the friction bound~\eqref{friction_bound} is set to $10^6$~N/m$^3$ for the active set algorithm and to $10^9$~N/m$^3$ for the non-smooth Newton algorithm. 
The matrix relative permeabilities of 
the liquid and gas phases are given 
by the Van Genuchten laws:
\begin{eqnarray*}
k_{r,m}^\l(s^\l) = 
\left\{\!\!\!\!\begin{array}{r@{\,\,}c@{\,\,}ll}
&0 &\mbox{if}& s^\l < S_{lr},\\
&1 &\mbox{if}& s^\l > 1-S_{gr},\\
&\sqrt{\bar s^\l} \(1- (1-(\bar s^\l)^{1/q})^q\)^2 &\mbox{if}&  S_{lr}\leq s^\l \leq 1-S_{gr},  
\end{array}\right.
\end{eqnarray*}
\begin{eqnarray*}
k_{r,m}^\g(s^\g) = 
\left\{\!\!\!\!\begin{array}{r@{\,\,}c@{\,\,}ll}
&0 &\mbox{if}& s^\g < S_{gr},\\
&1 &\mbox{if}& s^\g > 1-S_{lr},\\
&\sqrt{1-\bar s^\l} \(1- (\bar s^\l)^{1/q}\)^{2q} &\mbox{if}&  S_{gr}\leq s^\g \leq 1-S_{lr}, 
\end{array}\right.
\end{eqnarray*}
with $$
\bar s^\l = {s^\l-S_{lr} \over 1-S_{lr}-S_{gr} },
$$
and the parameter $q=0.328$, the residual liquid and gas saturations 
$S_{lr}=0.35$ and $S_{gr}=0$; in the fractures, we take $k_{r,f}^\alpha(s) = s$ for both phases. The phase mobilities are then $\eta_m^\a(s^\a) = k_{r,m}^\a(s^\a)/\mu^\a$ and $\eta_f^\a(s^\a) = k_{r,f}^\a(s^\a)/\mu^\a$, $\a\in\{\l,\g\}$ both in the matrix and in the fractures, with the wetting and non-wetting dynamic viscosities $\mu^\l = 10^{-3}\,\rm Pa{\cdot}s$ and $\mu^\g = 1.851{\cdot}10^{-5}\,\rm Pa{\cdot}s$. These functions are not bounded below by a strictly positive number, but this does not affect the numerical results (modifications ensuring such a bound can be implemented, and lead to nearly imperceptible changes in the numerical outputs~\cite{GDM-poromeca-cont}).
The saturation--capillary pressure relation is given by Corey's law:
$$
s_{\omega}^\g = S_{\omega}^\g (p_c) = \max\left(1 - \exp\left(-\frac{p_c}{R_{\omega}}\right),0\right),
\quad \omega\in \{m,f\},
$$
with $R_m = 2{\cdot}10^8$\,Pa and $R_f = 10^2$\,Pa. No damaged layer is included in the model, setting $d_\aa = 0$, 
$\eta^\a_\aa = \eta^\a_f$, and $S^\a_\aa = S^\a_f$.  
Moreover, the medium is supposed to have a pre-stress state described by the following tensor:
\begin{equation*}
\bbsig^0 = \sigma_x^0 \, \mathbf e_x \otimes \mathbf e_x + \sigma^0_r \, {\mathbf e}_r \otimes \mathbf e_r +
\sigma^0_\theta \, \mathbf e_\theta \otimes \mathbf e_\theta,
\qquad \sigma_x^0 = 16\,\text{MPa}, \ \ \sigma_r^0 = \sigma_\theta^0 = 12\,\text{MPa},
\end{equation*}
taken into account as an additional term in the sum of the purely elastic and fluid matrix equivalent pressure contributions.

Initially, the system is assumed to be fully saturated with the liquid phase, both in the matrix and in the fracture network, with uniform pressures $p^{0,\l}_m=p^{0,\g}_m = 4\,\mathrm{MPa}$ in the matrix, and $p^{0,\l}_f=p^{0,\g}_f = 10^5\,\mathrm{Pa}$ in the fracture network.

Concerning flow boundary conditions, the porous medium is assumed impervious (vanishing fluxes) on the lateral boundaries corresponding to $x=0$ and $x=L$.
On the inner surface $r=R_{\rm int}$, a given gas saturation is imposed: $s^\g_m = 0.35$ on the matrix side and $s^\g_f = 1 - 10^{-8}$ at fracture nodes, and atmospheric pressure $p^\g_{f} = 10^5\,\text{Pa}$ everywhere. On the outer surface $r=R_{\rm ext}$, a liquid saturation $s_m^\l = 1$ and pressure $p_m^\l = 4\,\text{MPa}$ are imposed.

As for boundary conditions on the mechanical part of the model, a vanishing axial displacement $u_x$ and a vanishing tangential stress are imposed on the lateral boundaries. 
On the other hand, external surface loads $\mathbf g$ (uniform pressures) are applied on the inner and outer surfaces:
$$\mathbf g =\begin{cases}\begin{alignedat}{2} -\sigma_N^T \mathbf n, &\ \ \sigma_N^T > 0,&\ \ &\text{if }r=R_{\rm ext},\\
-p_{\rm atm}\mathbf n,  &\ \ p_{\rm atm} > 0,& \ \ &\text{if }r=R_{\rm int},\end{alignedat}\end{cases}$$
where $\mathbf n = \mathbf e_r$ for $r=R_{\rm ext}$ and $\mathbf n = -\mathbf e_r$ for $r=R_{\rm int}$.
We consider $\sigma_N^T = 10.95\,\text{MPa}$ as the numerical value for the uniform pressure on the outer surface.

Figures~\ref{cf_andra} and \ref{df_andra} show respectively the contact state and the fracture aperture of the fracture network on each fracture face at three different times. Under the effect of the internal and external pressures $p_{\rm atm}$ and $\sigma_N^T$, the fractures are all in contact at initial time with a slipping state for the chevron-like fractures and a sticking state for the horizontal one. The chevron-like fractures start opening up at later times, as gas starts filling the matrix with strong capillary pressure. At the final time, most fractures are open, except half of the horizontal one which is still sticking. In Figure~\ref{df_vs_t_andra}, we plot the time history of the mean fracture width, and compare the result obtained using the active set method and the regularized non-smooth Newton method for the mechanics. The difference between the two curves is undetectable as expected. In Figure~\ref{sm_pm_phim}, it can be seen that strong capillary forces induce the drying of the matrix in the neighborhood of the inner surface, along with a highly negative liquid pressure. This negative liquid pressure also triggers the contraction of the pores and the spreading of the fracture sides.

Finally, Figure \ref{perfs_andra} presents a comparison of the respective performances of the Newton--Raphson method for the flow and mechanics, obtained using the active set and the regularized non-smooth Newton methods for the mechanics. It can be seen that the active set method provides a slightly better convergence than the non-smooth Newton algorithm. In the same figure, we also compare the performance of the active set method combined with Newton--Krylov accelerations on the fixed points $p^E = {\bf g}_p(p^E)$ or $\bu = {\bf g}_\bu(\bu)$; as already discussed in the beginning of Section~\ref{num.experiments}, the Newton-Krylov acceleration of the fixed point $p^E = {\bf g}_p(p^E)$  is fairly more efficient than the one of the fixed point $\bu = {\bf g}_\bu(\bu)$.

\begin{figure}
\captionsetup[subfigure]{labelformat=empty}
\centering
\subfloat[]{
\includegraphics[keepaspectratio=true,scale=.55]{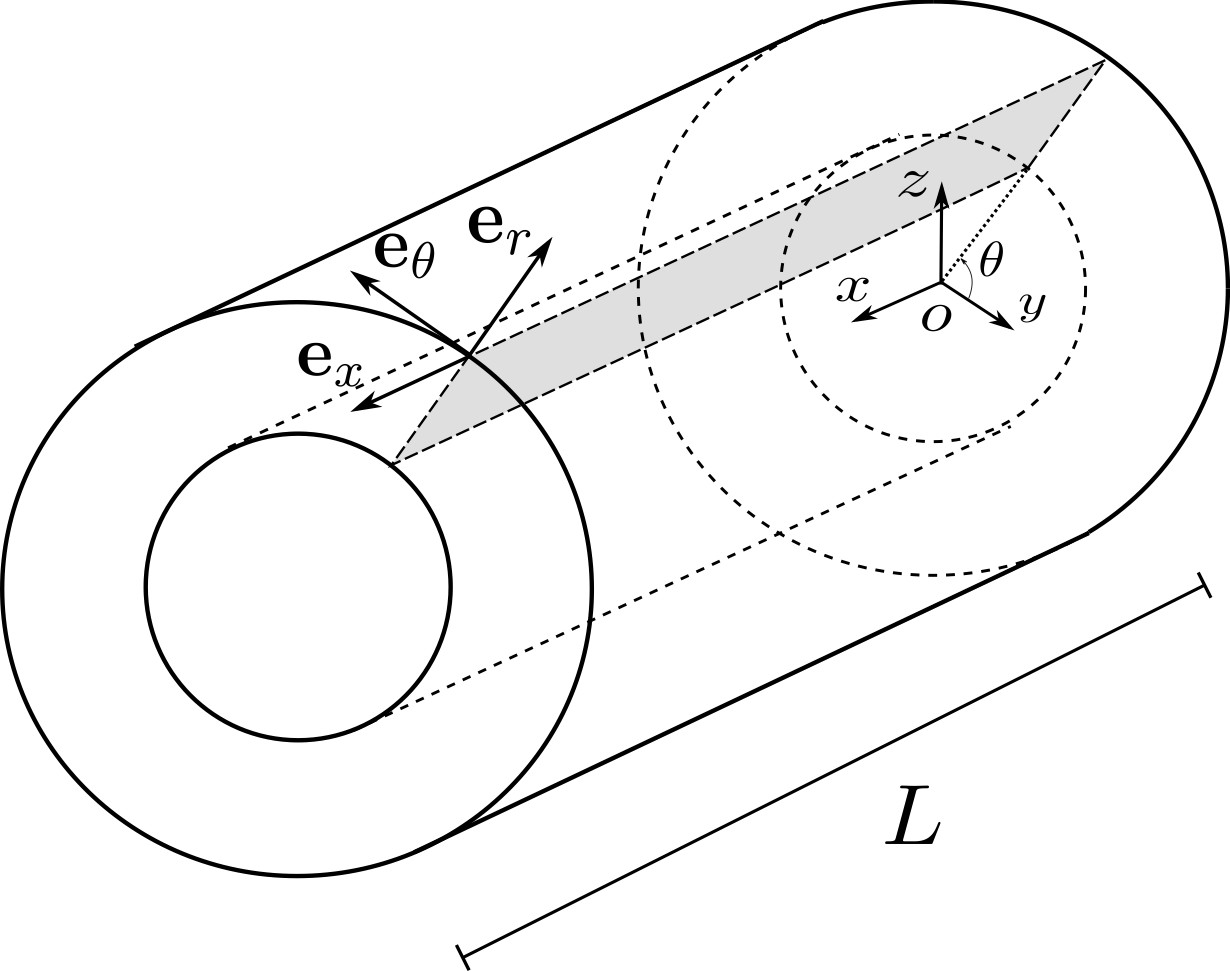}}
\hspace{1cm}
\subfloat[]{
\includegraphics[keepaspectratio=true,scale=.4]{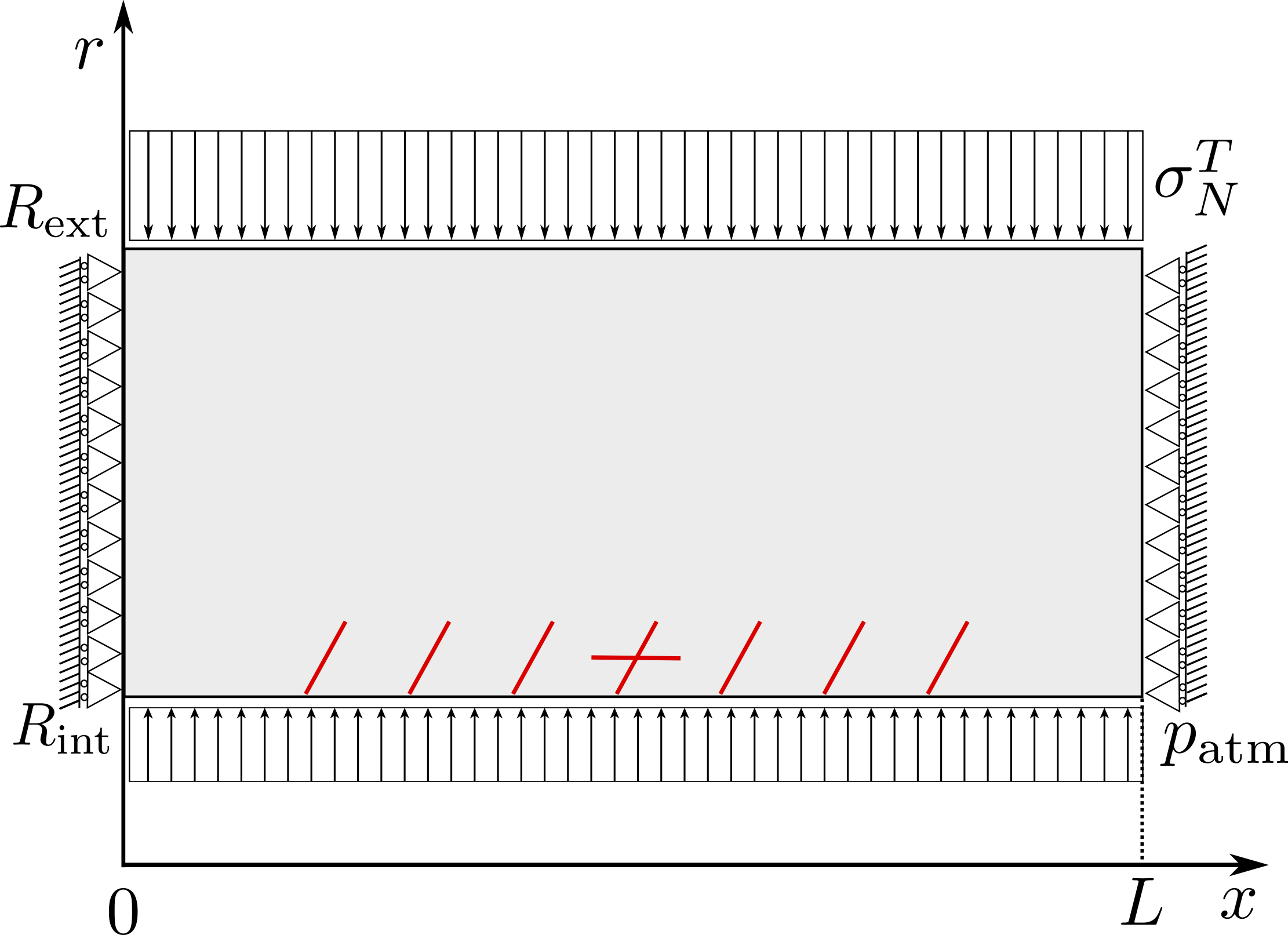}}
\caption{Hollow cylinder of length $L$ and internal and external radii $R_{\rm int}$ and $R_{\rm ext}$, respectively. The radial section (unscaled) is highlighted in gray and the fracture network is shown in red on the right figure, along with mechanical boundary conditions.}
\label{geometry_andra}
\end{figure}

\begin{figure}
\captionsetup[subfigure]{labelformat=empty}
\centering

\subfloat[]{
\includegraphics[keepaspectratio=true,scale=.15]{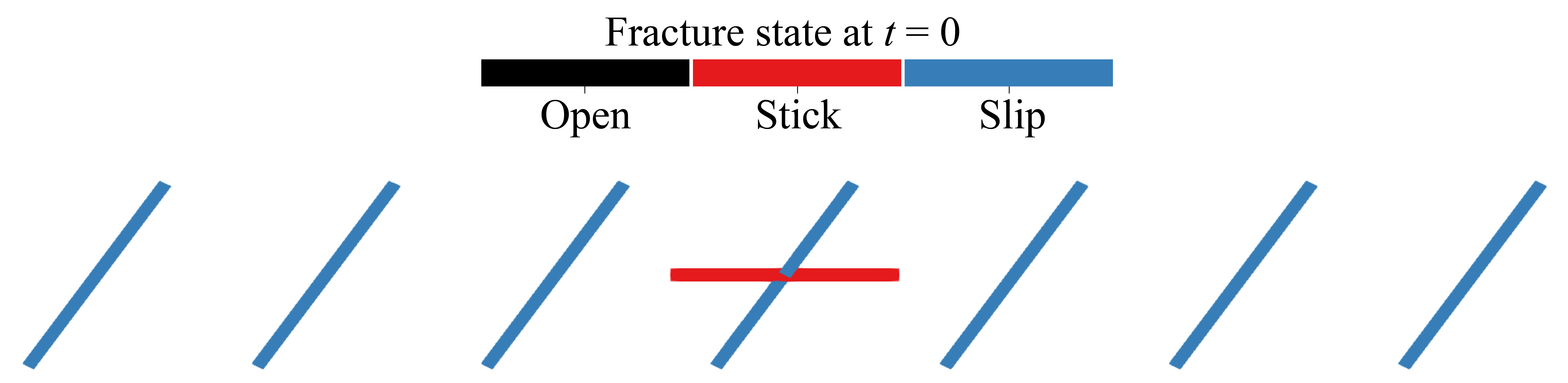}}\\
\subfloat[]{
\includegraphics[keepaspectratio=true,scale=.15]{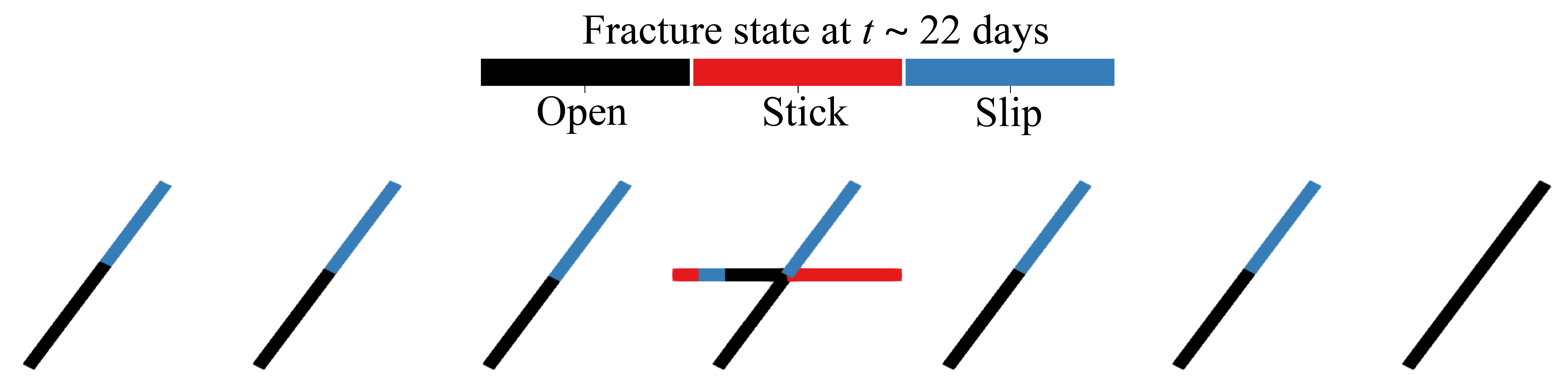}} \\
\subfloat[]{
\includegraphics[keepaspectratio=true,scale=.15]{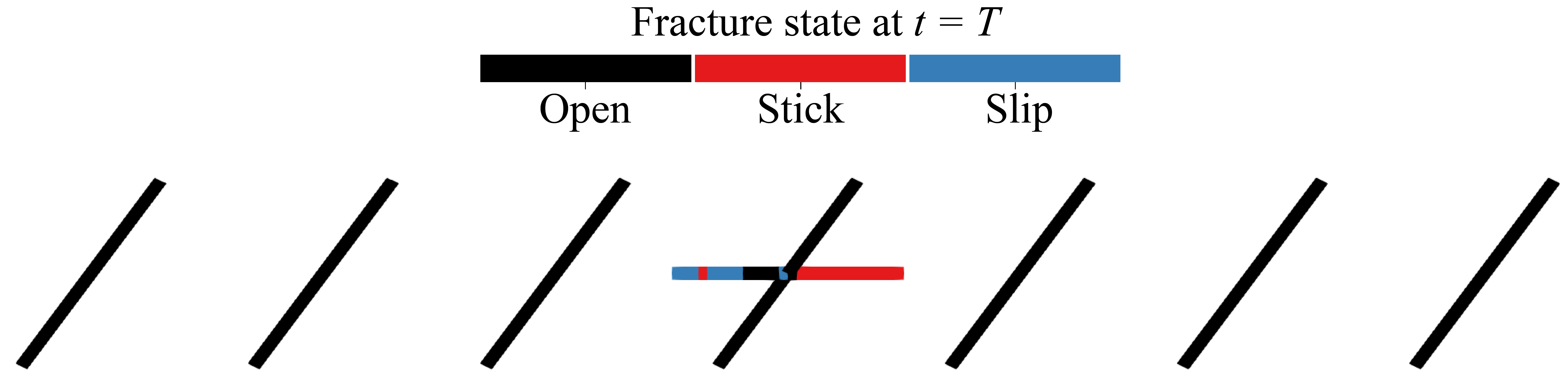}} 

\caption{Fracture state at different times, for the example of Section~\ref{cas_andra}.}
\label{cf_andra}
\end{figure}

\begin{figure}
\centering

\includegraphics[keepaspectratio=true,scale=.8]{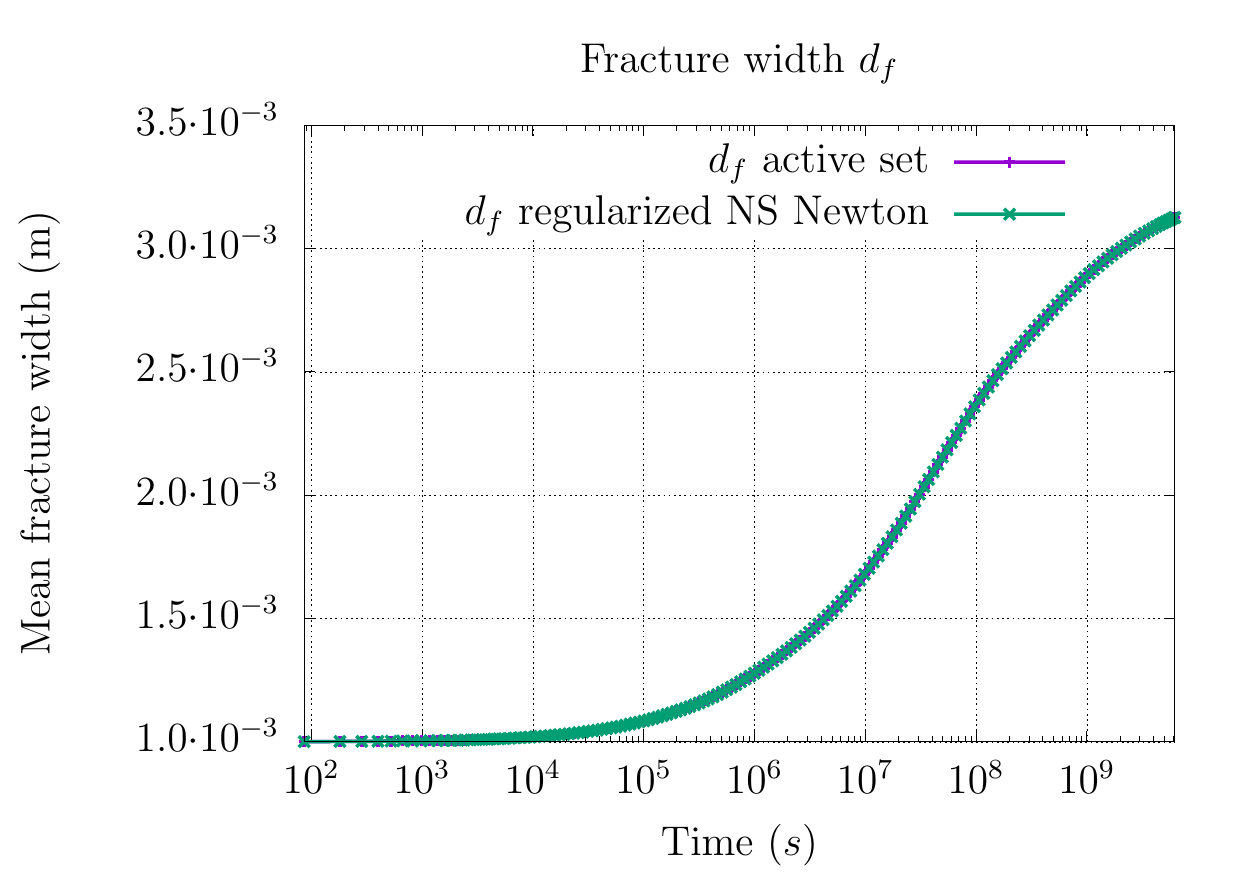}
\caption{Time histories of the mean fracture width given by the active set method and the regularized non-smooth Newton method for the mechanics, for the example of Section~\ref{cas_andra}.}
\label{df_vs_t_andra}
\end{figure}

\begin{figure}
\captionsetup[subfigure]{labelformat=empty}
\centering

\subfloat[]{
\includegraphics[keepaspectratio=true,scale=.15]{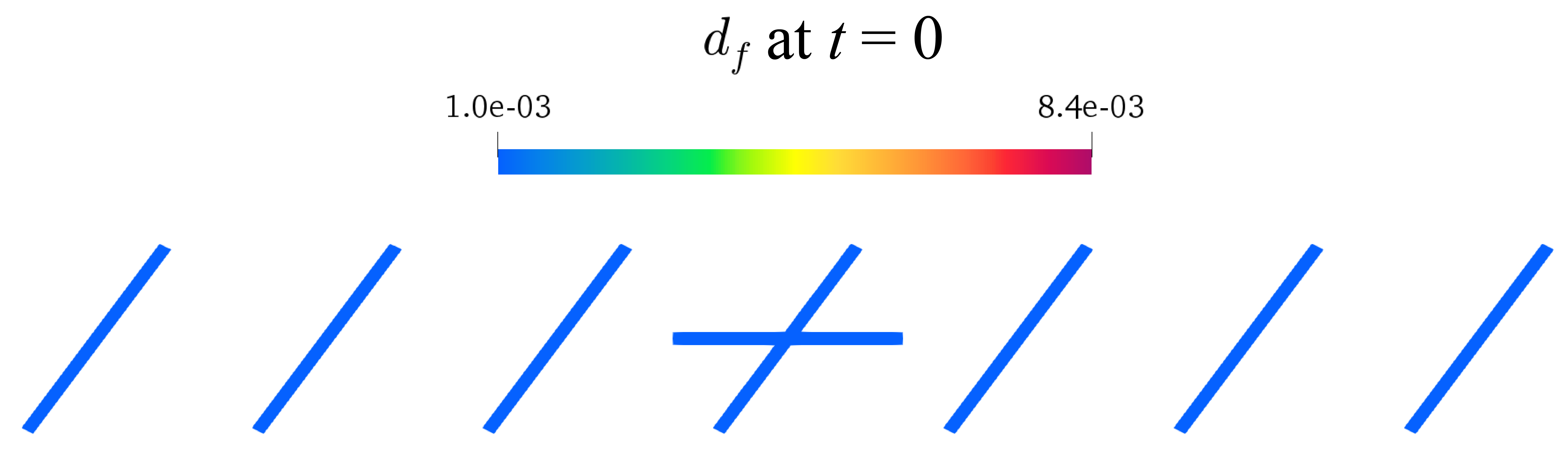}}\\
\subfloat[]{
\includegraphics[keepaspectratio=true,scale=.15]{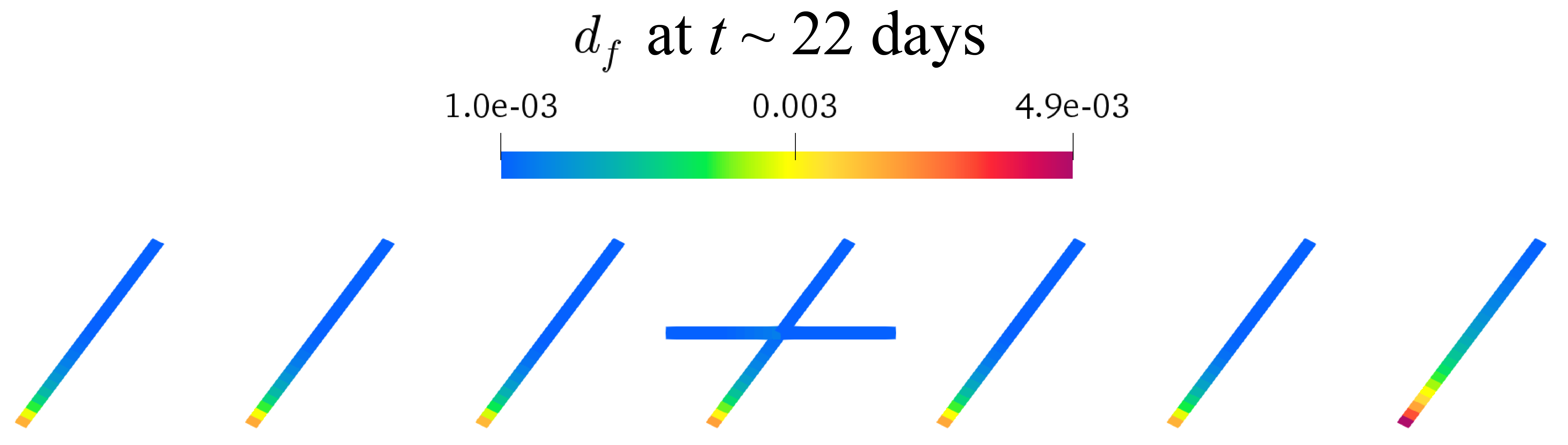}} \\
\subfloat[]{
\includegraphics[keepaspectratio=true,scale=.15]{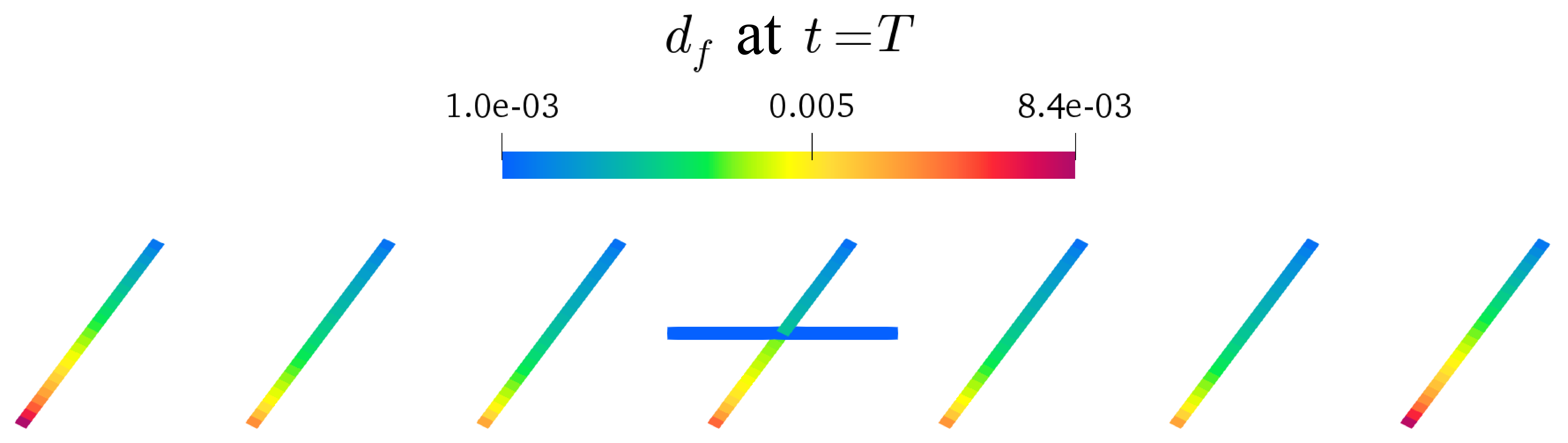}} 

\caption{Fracture aperture (m) at different times, for the example of Section~\ref{cas_andra}.}
\label{df_andra}
\end{figure}

\begin{figure}
\captionsetup[subfigure]{labelformat=empty}
\centering

\subfloat[]{
\includegraphics[keepaspectratio=true,scale=.175]{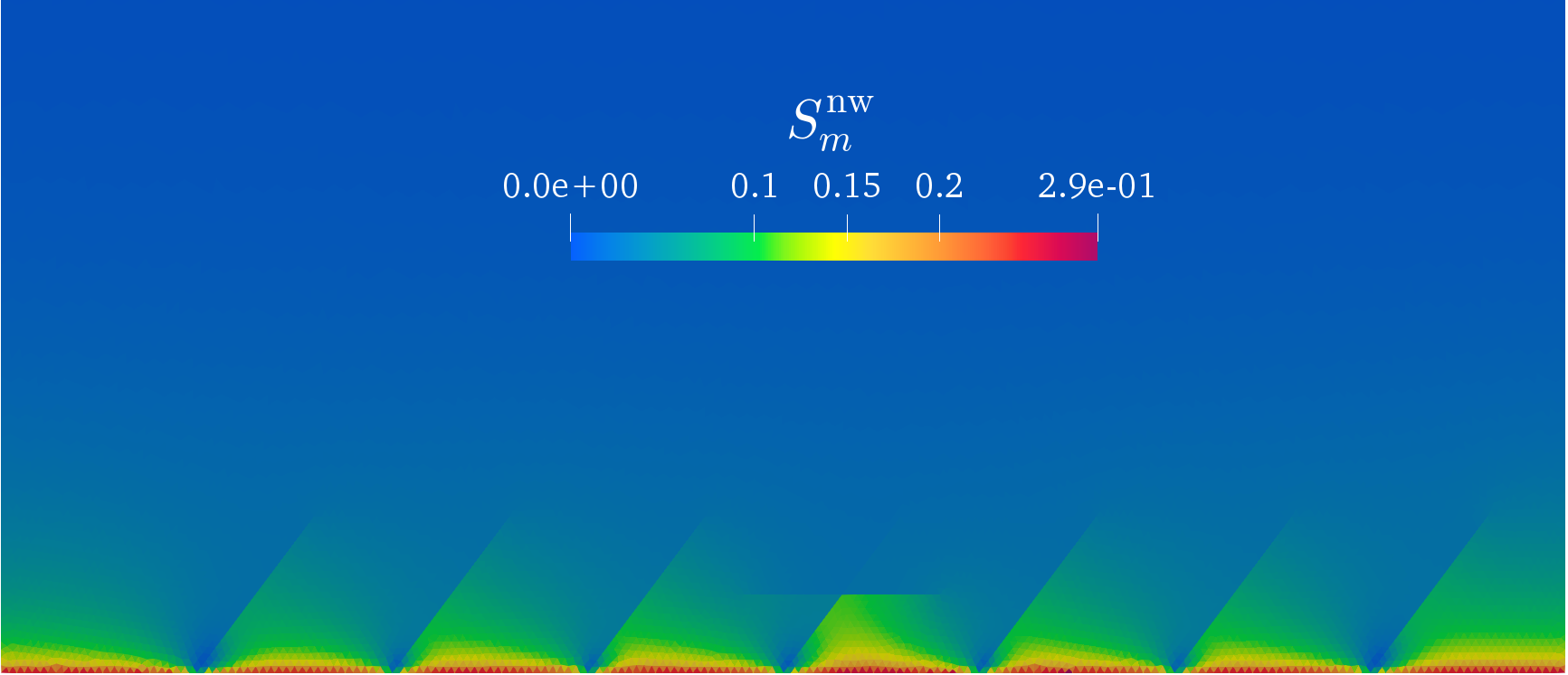}}\\
\subfloat[]{
\includegraphics[keepaspectratio=true,scale=.175]{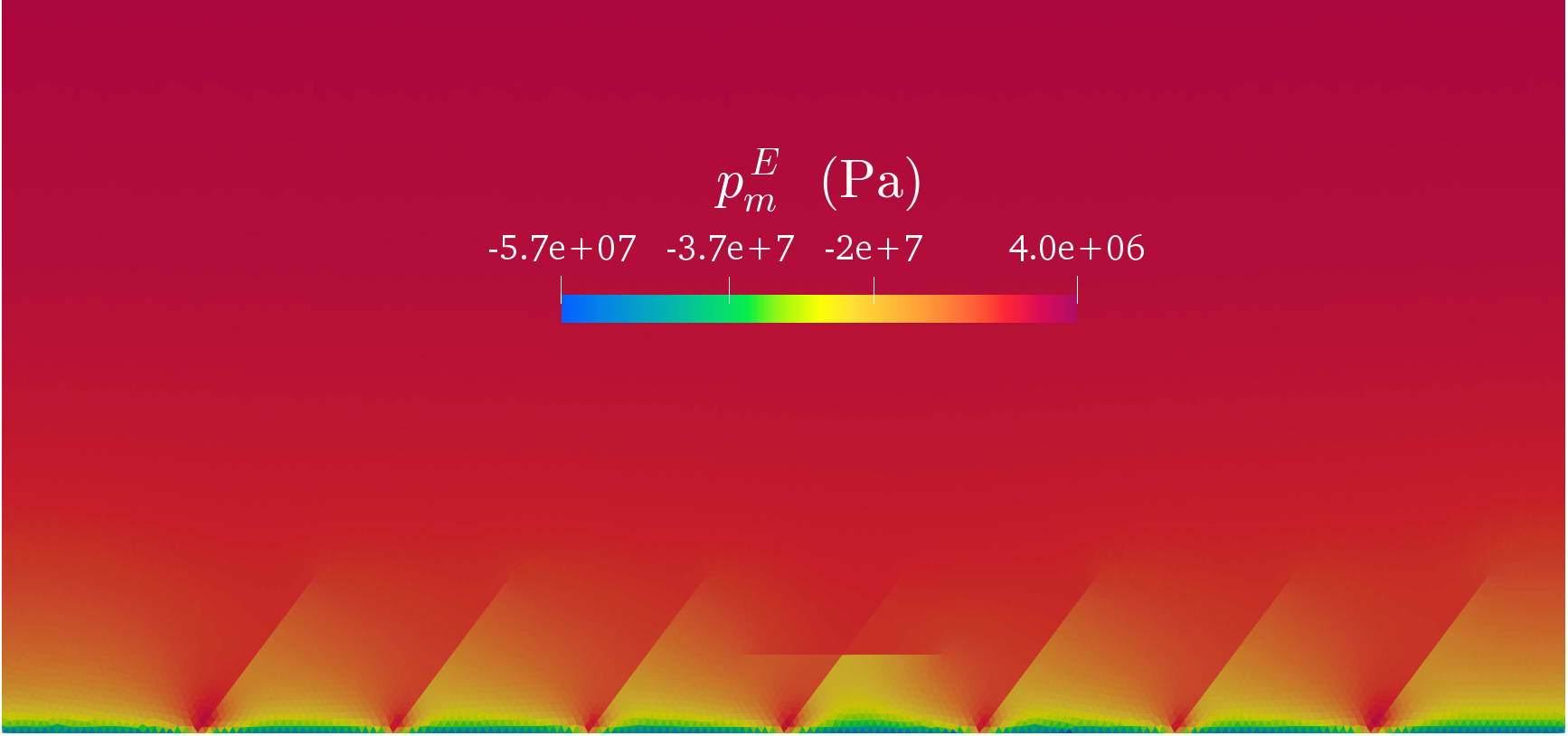}} \\
\subfloat[]{
\includegraphics[keepaspectratio=true,scale=.175]{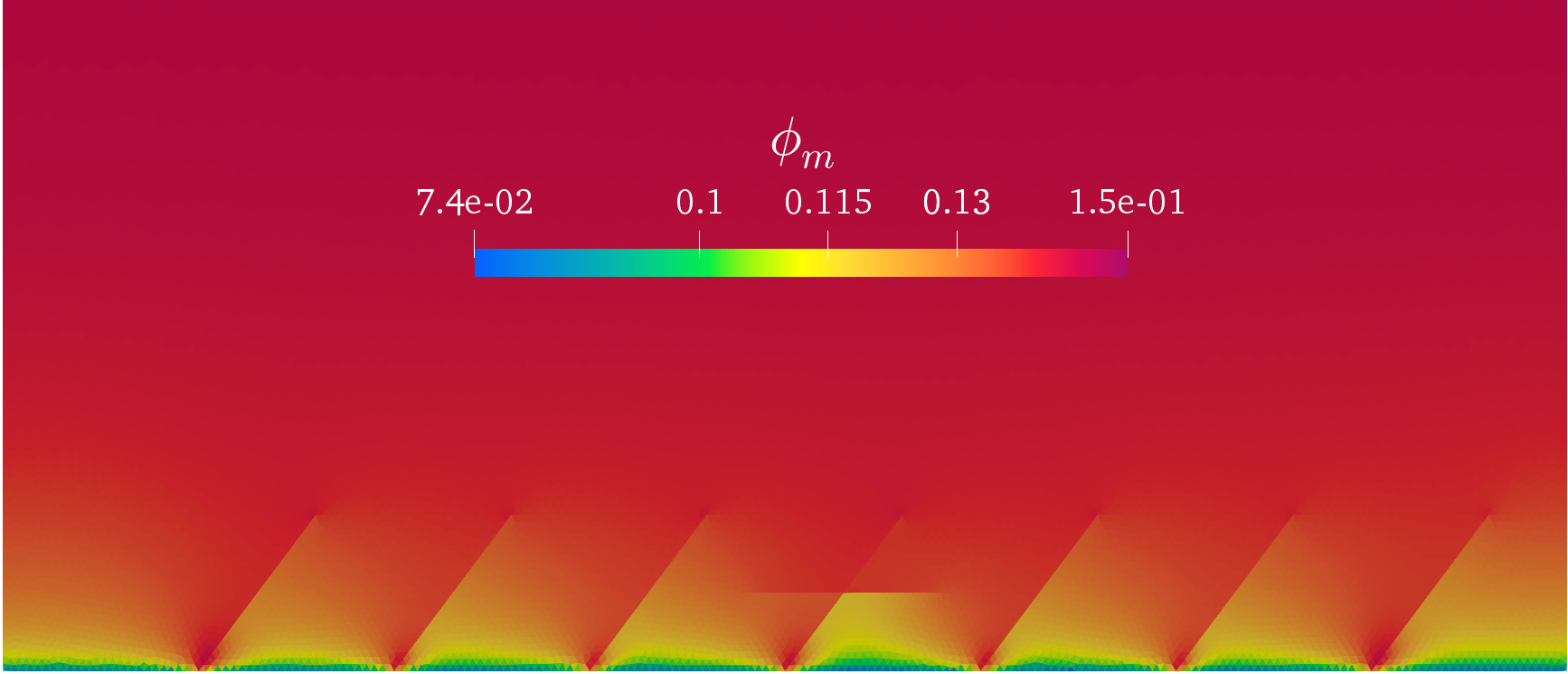}} 

\caption{From top to bottom, zooms on the gas saturation, equivalent pressure, and porosity at final time, for the example of Section~\ref{cas_andra}.}
\label{sm_pm_phim}
\end{figure}

\begin{figure}
\centering
%
\subfloat[Active set and NS Newton methods]{
\includegraphics[keepaspectratio=true,scale=.725]{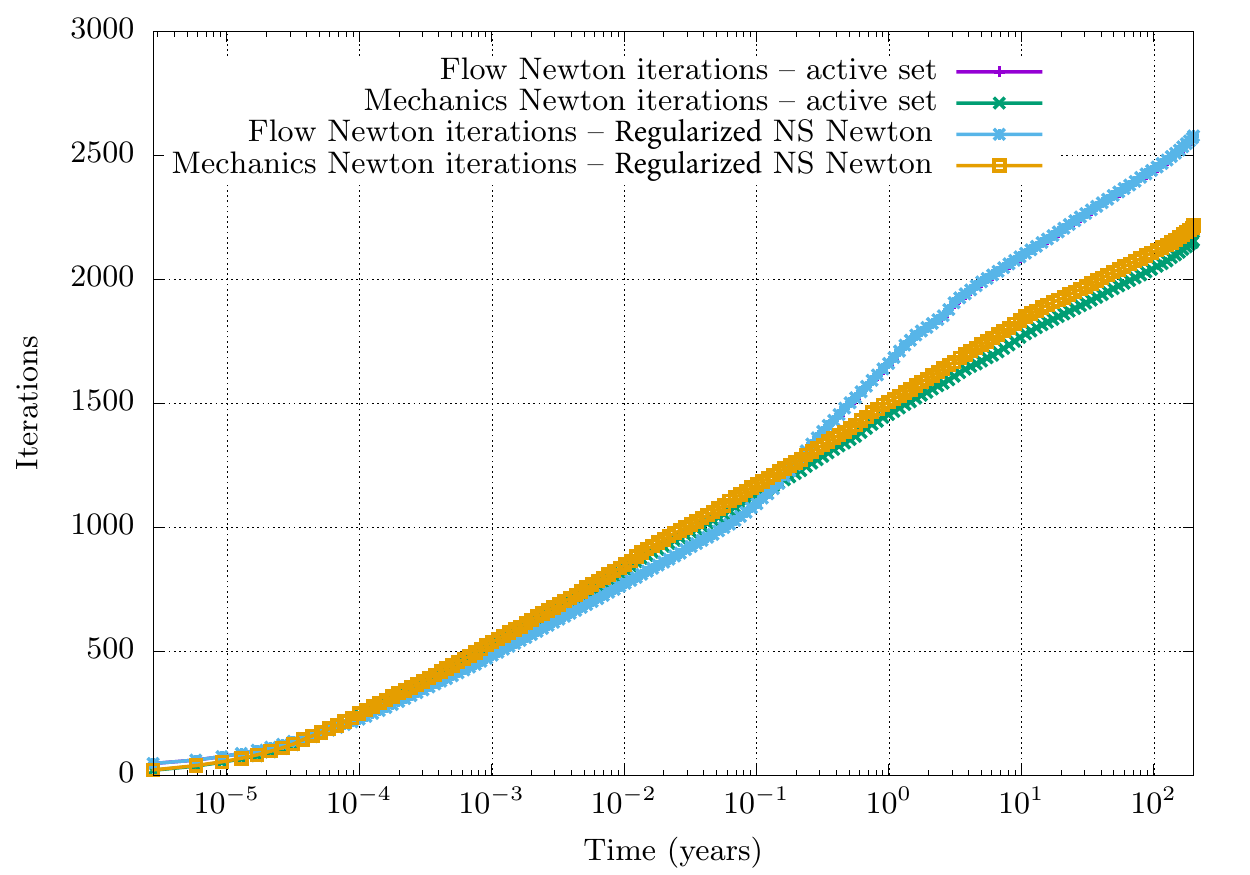}}
\subfloat[Newton--Krylov methods on $p^E$ and $\bu$]{
\includegraphics[keepaspectratio=true,scale=.725]{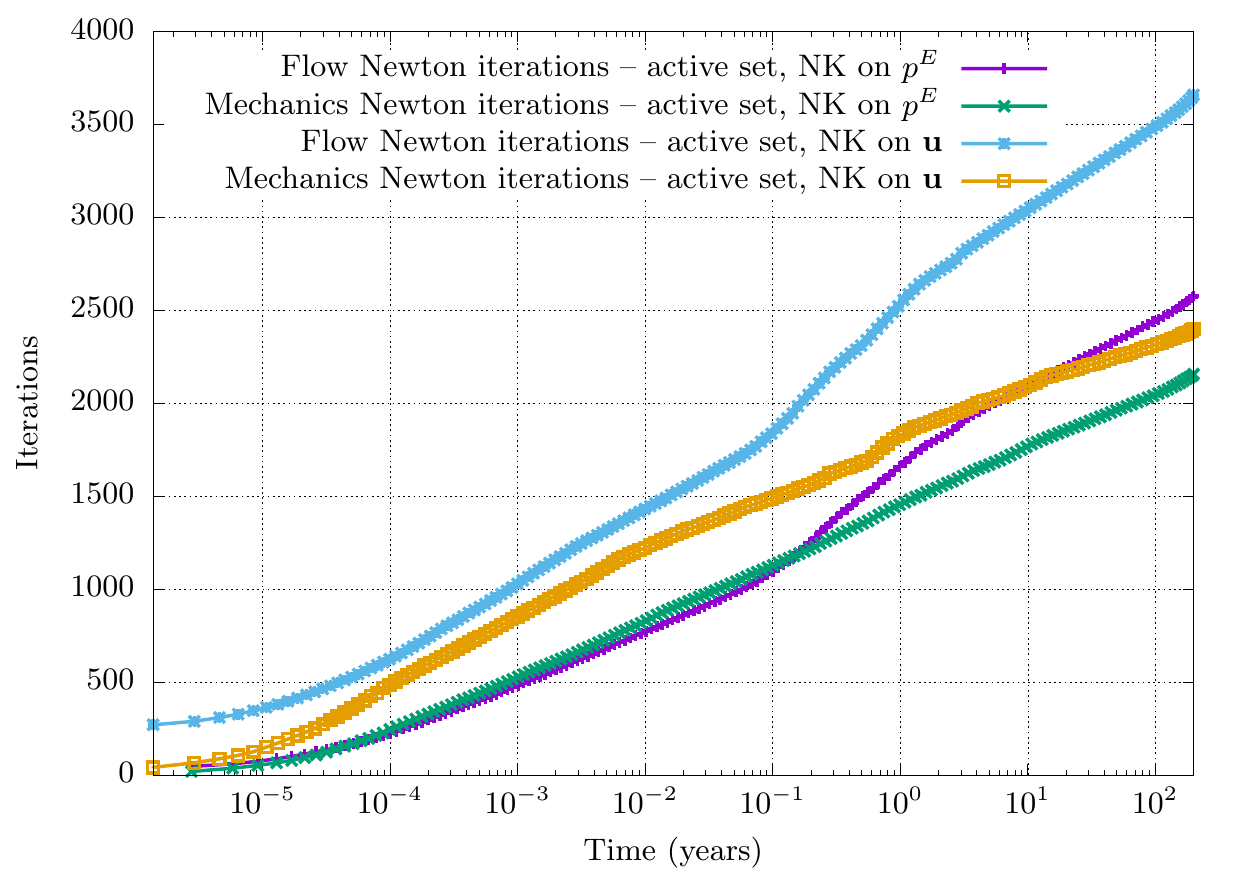}}
\caption{Total number of nonlinear flow and mechanics iterations vs.~time for the example of Section~\ref{cas_andra}. (a) Comparison of the active set and regularized non-smooth Newton algorithms for the mechanics combined with the Newton-Krylov acceleration of the fixed point $p^E = {\bf g}_p(p^E)$. (b) Comparison of the Newton-Krylov accelerations of the fixed points $p^E = {\bf g}_p(p^E)$ and $\bu = {\bf g}_\bu(\bu)$ both combined with the active set algorithm for the mechanics.}
\label{perfs_andra}
\end{figure}
\section{Conclusions}
\label{sec:conclusions}
We presented a model for a two-phase Darcy flow in a linear elastic fractured porous medium, including a Coulomb frictional contact model at matrix--fracture interfaces and assuming that phase pressures are discontinuous at such interfaces; this model generalizes the presentation in~\cite{GDM-poromeca-disc} where fractures are supposed to remain open. We applied the gradient discretization framework to introduce the general discrete counterpart of the problem, and proved its stability through suitable energy estimates, as well as the existence of a solution.

To perform numerical simulations, we discretized the mechanics using a $H^1(\Omega{\setminus}\Gamma)$-conforming scheme ($\bb P_2$ finite elements with discontinuities imposed at matrix--fracture interfaces) for the displacement coupled with fracture face-wise $\bb P_0$ Lagrange multipliers representing normal and tangential stresses for the frictional contact conditions. 
The major advantages of this mixed formulation consist in circumventing difficulties related to intersections between fractures, corners, and tips, as well as yielding a local (fracture face-wise) expression for the contact conditions.
The two-phase flow is, on the other hand, discretized via the TPFA scheme~\cite{gem.aghili}. 

Three test cases were presented, the first two to validate the pure contact mechanics model, the last one to present a realistic simulation of an axisymmetric problem involving the coupling with a two-phase flow in a radioactive waste geological storage structure, for which the data set was provided by Andra.
We compared the performances of two algorithms -- active set and regularized non-smooth Newton -- to solve the nonlinear system stemming from the contact mechanics. We also employed two versions of a Newton--Krylov method to accelerate the fixed-point algorithm to solve the coupled problem, based on the equivalent pressure and on the displacement field. It turns out that the active set method is, in general, slightly more efficient than the regularized non-smooth Newton method in terms of number of iterations.  As expected, the most efficient Newton--Krylov coupling algorithm is the one based on the equivalent pressure, compared with the displacement-based one.

Perspectives for future work include: (i)~the convergence analysis for the gradient scheme presented here, starting from the energy estimates of the discrete problem; (ii)~the extension of the implemented discretizations to more general schemes in three space dimensions; (iii)~the usage of polyhedral grids, to enable the simulation of problems set on more complex geometries (as in most real-life scenarios) while maintaining a reasonable computational cost.
%
%
\begin{acknowledgements}
We are grateful to Andra for partially supporting this work. We also thank Laurent Monasse (Inria COFFEE \& Universit\'e C\^ote d'Azur) for fruitful discussions on this work, mainly related to the discretization of contact mechanics. Finally, we thank Eirik Keilegavlen (University of Bergen) for providing us with the mesh used in the example of Section~\ref{bergen_meca}.
\end{acknowledgements}
%
%
\bibliographystyle{plain}
\bibliography{Poromeca_contact}
\end{document}